\numberwithin{equation}{section}
\numberwithin{equation}{section}
\newtheorem{theorem}{Theorem}[section]
\newtheorem{proposition}[theorem]{Proposition}
\newtheorem{lemma}[theorem]{Lemma}
\theoremstyle{definition}
\newtheorem{definition}[theorem]{Definition}
\newtheorem{mytheorem}{Theorem}[section]
\theoremstyle{remark}
\newtheorem{remark}[theorem]{Remark}
\newcommand{\Om}{\Omega}
\newcommand{\R}{\mathbb{R}}
\newcommand{\Ran}{\operatorname{Range}}
\newcommand{\lset}[1]{\left\{ #1 \right.}
\newcommand{\Span}{\operatorname{span}}
\newcommand{\N}{\mathbb{N}}
\newcommand{\T}{\mathbb{T}}
\newcommand{\Z}{\mathbb{Z}}
\begin{document}

	\title{SYMMETRY OF UNIFORMLY ROTATING SOLUTIONS FOR THE VORTEX-WAVE SYSTEM }

		\author{Daomin Cao, Boquan Fan, Rui Li}

		\address{Institute of Applied Mathematics, Chinese Academy of Sciences, Beijing 100190, and University of Chinese Academy of Sciences, Beijing 100049,  P.R. China}
		\email{dmcao@amt.ac.cn}
		\address{Institute of Applied Mathematics, Chinese Academy of Sciences, Beijing 100190, and University of Chinese Academy of Sciences, Beijing 100049,  P.R. China}
		\email{fanboquan22@mails.ucas.ac.cn}
		\address{Institute of Applied Mathematics, Chinese Academy of Sciences, Beijing 100190, and University of Chinese Academy of Sciences, Beijing 100049,  P.R. China}
		\email{lirui2021@amss.ac.cn}

    \begin{abstract}
		In this paper, we study the radial symmetry properties of stationary and uniformly rotating solutions of the vortex-wave system introduced by Marchioro and Pulvirenti \cite{Mar1}. We show that every uniformly rotating patch $\left(D,x_1,x_2,..,x_k\right)$ with angular velocity $\Omega\leq 0$ must be radial with respect to the only point vortex $x_1$, implying that $k=1$. In other words, the background vorticity consists of finite nested annulus and the point vortex is located at the center of these annulus. 
        In contrast to the case where the angular velocity is non-positive, we prove that there exists a family of uniformly rotating patch $(D^n,x_1^n)_n$ solutions, which are associated with a sequence of positive angular velocities $\{\Om_n\}$ and are not annular. Furthermore, we find that the set of bifurcating angular velocities $\{\Om_n\}$ is dense in the interval $(0,+\infty)$, a novel feature that distinguishes this behavior from that observed in the classical Euler equation and gSQG equation.
      
    \end{abstract}

	\maketitle

	{\small{\bf Keywords:} Vortex-wave system; Uniformly rotating patch solutions; Bifurcation theory; Symmetry; .}\\

\section{Introduction and main results}
	\subsection{Vortex-wave system}
        In this paper, we focus on the vortex-wave system, representing the complex interactions between the two-dimensional Euler vorticity equation and the point-vortex system. To this end, we begin by delving into the fundamentals of the 2D Euler equation, which describes the dynamics of an incompressible, inviscid fluid in $\R^2$ and can be written as:
        \begin{align}\label{1-1}
            \begin{cases}
                \partial_t\textbf{u}+\left(\textbf{u}\cdot \nabla\right) \textbf{u} =-\nabla P,&\text{in}\ \mathbb{R}^2\times (0,T),\\
                \ \nabla\cdot\textbf{u}=0     ,&\text{in}\ \mathbb{R}^2\times (0,T),%\\
            \end{cases}
        \end{align}
        where $\textbf{u}=\left(u_1,u_2\right)$ is the velocity field and $P$ is the scalar pressure. %and $\textbf{u}_0$ is the initial data.
        In the planar case, we usually introduce  the scalar vorticity of the fluid as 
        \begin{equation*}
            \omega:=\partial_{x_1}u_2-\partial_{x_2}u_1.
        \end{equation*}

        By applying the curl operator to both sides of the first equation in \eqref{1-1}, we derive the vorticity formulation of the Euler equations as follows (see \cite{Maj,Mar2}):
        \begin{align}\label{1-2}
            \partial_t\omega+\textbf{u}\cdot\nabla\omega=0,\quad \text{in}\ \R^2\times (0,T).
        \end{align}

        The velocity field can be reconstructed from the vorticity by using the well-known Biot-Savart law, expressed as:
        \begin{equation}\label{1-3}
            \textbf{u}(x,t)=-\frac{1}{2\pi}\int_{\mathbb{R}^2}\frac{(x-y)^\perp}{|x-y|^2}\omega(y,t)\,dy,
        \end{equation}
        where $(a,b)^\perp=(b,-a)$ denotes clockwise rotation through $\pi/2$ of any vector $(a,b)\in\mathbb{R}^2$. We will denote $K(x)=-(1/2\pi)(x^\perp/|x|^2)$ as the Biot-Savart kernel in the sequel. 
        Equations \eqref{1-2} and \eqref{1-3} indicate that the vorticity $\omega$ is advected by a velocity field that remains divergence-free, induced by its own vorticity.

        In certain scenarios where the vorticity is sharply concentrated in $ k$ small disjoint regions, with $k$ being a positive integer, its temporal evolution can be approximated by the point vortex model (as exemplified in \cite{Lin}). This model is represented by an ordinary differential equation (ODE) system, which can be expressed as:
        \begin{equation*}
            \frac{dx_i}{dt}=\sum_{j=1,j\neq i}^{k}\kappa_jK(x_i-x_j),\quad {\rm for}\ i=1,...,k.
        \end{equation*}
        The point vortex model and its relation with the Euler equations have been analyzed extensively; see, for example, \cite{Cao2015,Mar 1986,Schaftingen 2010,Turkington 1987}.
        Within the framework of the point vortex model, for each point vortex located at $x_i$, with a corresponding vorticity strength $\kappa_i$, the induced velocity is given by the function $\kappa_i K(~\cdot - x_i)$. Additionally, the dynamic evolution of each point vortex is determined by the superposition of velocity fields generated by all other $k - 1$ point vortices present in the model.

        Based on the aforementioned considerations, Marchioro and Pulvirenti introduced and investigated a hybrid model in \cite{Mar1}, commonly referred to as the \textit{Vortex-Wave System}. In the whole plane the vortex-wave system can be written as:
        \begin{equation}\label{1-4}
            \begin{cases}
                \partial_t\omega+\textbf{u}\cdot\nabla\omega=0,\vspace{0.25em}\\
                \ \frac{dx_i}{dt}=K*\omega(x_i,t)+\sum_{j=1,j\neq i}^{k}\kappa_jK(x_i-x_j),\quad {\rm for}\ i=1,...,k,\vspace{0.25em}\\
                \ \textbf{u}=K*\omega+\sum_{j=1}^{k}\kappa_jK(~\cdot-x_j).
            \end{cases}
        \end{equation}

        Throughout this paper, we call $\omega:\R^2\to \R$ the background vorticity and always assume $\kappa_j=1$, for $j=1,...,k$. Then system \eqref{1-4} indicates that the background vorticity is transported both by the velocity field induced by itself (the term $K * \omega$) and by the $k$ point vortices (the term $\sum_{j=1}^{k} \kappa_j K(~\cdot - x_j)$). Furthermore, each point vortex moves according to the velocity induced by the background vorticity (the term $K * \omega(x_i, t)$), as well as by the combined effect of all other $k - 1$ point vortices (the term $\sum_{\substack{j=1, j \neq i}}^{k} \kappa_j K(~\cdot - x_j)$). 
        In \cite{CaoWang2019}, Cao and Wang show the existence of steady multiple vortex patch solutions to the vortex-wave system in a planar bounded domain. 
        More results on the existence and uniqueness can be found in \cite{Bjorland,LM,LMN,Miot}.

	\subsection{Uniformly rotating patch solutions}
        In this paper, our first objective is to establish the radial symmetry properties for stationary and uniformly rotating solutions. 
        Our second objective is to construct non-radial solutions to the vortex-wave system. In particular, we only consider the patch solutions, where $\omega(\cdot,t)=1_{D(t)}$ is an indicator function of a bounded set $D(t)$ which evolves with the fluid.

        Let us begin with the definition of a stationary/uniformly rotating solutions in the patch setting. 
        \begin{definition}
            We call a solution $\left(\omega(x,t),x_1(t),...,x_k(t)\right)$ of \eqref{1-4} is uniformly rotating with angular $\Omega$ if $\omega(x,t)=\omega(R_{-\Omega t}x,0)$ and $x_i(t)=R_{\Omega t}x_i(0)$ for $i=1,...,k$, where $R_{\Omega t}x$ rotates a vector $x\in\mathbb{R}^2$ counterclockwise by angle $\Omega t$ about the origin.
        \end{definition} 

        Assume $\left(\omega(R_{-\Omega t}x),R_{\Omega t}x_1,...,R_{\Omega t}x_k\right)$ is uniformly rotating solution of \eqref{1-4}, then a simple computation gives the following system:
        \begin{align}\label{1-5}
            \begin{cases}
                \nabla^\perp\left(\omega*N+\sum_{i=1}^{k}N(x-x_i)+\frac{\Omega}{2}|x|^2\right)\cdot\nabla\omega=0,\\
                \ \Omega x_i^\perp=\frac{1}{2\pi}\int_{\mathbb{R}^2}\frac{(x_i-y)^\perp}{|x_i-y|^2}\omega(y)dy-\sum_{j=1,j\neq i}^{k}K(x_i-x_j),~{\rm for}\ i=1,...,k,
            \end{cases}
        \end{align}
        where $N(x)=-1/2\pi\ln|x|$. Since we are focusing on the patch solutions setting, we need to introduce a concept of weak solutions in order that \eqref{1-5} makes sense. To do this, multiplying both side of the first equation in \eqref{1-5} by a test function $\phi\in C_0^\infty(\R^2)$, we have
        \begin{equation*}
            \begin{split}
                0=&\int_{\mathbb{R}^2}\phi\nabla^\perp\left(\omega*N+\sum_{i=1}^{k}N(x-x_i)+\frac{\Omega}{2}|x|^2\right)\cdot\nabla\omega ~dx\\
                =&-\int_{\mathbb{R}^2}\phi\nabla\cdot\left[\omega\nabla^\perp\left(\omega*N+\sum_{i=1}^{k}N(x-x_i)+\frac{\Omega}{2}|x|^2\right)\right]dx\\
                =&-\int_{\mathbb{R}^2}\omega\nabla^\perp\left(\omega*N+\sum_{i=1}^{k}N(x-x_i)+\frac{\Omega}{2}|x|^2\right)\cdot\nabla\phi~dx.
            \end{split}
        \end{equation*}
        Based on the above considerations, we give the following definition.

        \begin{definition}
            We say $\left(\omega,x_1,...,x_k\right)\in L^\infty_c(\mathbb{R}^2)\times\left(\mathbb{R}^2\right)^k$ (where $L^\infty_c(\mathbb{R}^2)$ denotes the set of all bounded measurable functions with compact support) is a weak solution of \eqref{1-5} if for any $\phi\in C^\infty_0(\mathbb{R}^2)$, we have
            \begin{align}\label{1-6}
                \begin{cases}
                    \int_{\mathbb{R}^2}\omega\nabla^\perp\left(\omega*N+\sum_{i=1}^{k}N(x-x_i)+\frac{\Omega}{2}|x|^2\right)\cdot\nabla\phi~dx=0,\\
                    \ \Omega x_i^\perp=\frac{1}{2\pi}\int_{\mathbb{R}^2}\frac{(x_i-y)^\perp}{|x_i-y|^2}\omega(y)dy-\sum_{j=1,j\neq i}^{k}K(x_i-x_j),~~i=1,...,k.
                \end{cases}
            \end{align}
        \end{definition}
        Having made these preparations, we can define the uniformly rotating patch solutions of \eqref{1-4}.
        \begin{definition}
            Assume that $D$ is a bounded open set with $C^{1,\alpha}$ boundary and $x_i\in\mathbb{R}^2$ for $i=1,...,k$. We say $\left(D,x_1,...,x_k\right)$ is a uniformly rotating patch solution with angular velocity $\Omega$ of the vortex-wave system \eqref{1-4}, if $\left(1_D,x_1,...,x_k\right)$ is a weak solution of \eqref{1-5}. Furthermore, if $\Omega=0$, then we call it stationary patch solution.
        \end{definition}

	\subsection{Main results} 
        Firstly, let us review some symmetry properties of the planar Euler equation. In this case, if $D\subset\mathbb{R}^2$ is a uniformly rotating solution of the 2D Euler equation with angular velocity $\Omega$, then the function $\psi=1_{D}*N+(\Omega/2)|x|^2$ must be a constant along each connected component of $\partial D$. If $\Omega=0$ and $\psi=constant$ on the whole $\partial D$, then Fraenkel \cite{Fra} proved that  $D$ must be a disk. In that case, Fraenkel showed that the stream function $\phi$ must solve a semilinear elliptic equation $-\Delta\psi=f(\psi)$ in $\mathbb{R}^2$ with $f(\psi)=1_{\left\{\psi>C\right\}}$ and then used the moving-plane method developed in \cite{Gid,Ser} to obtain the symmetry. For $\Omega<0$, Hmidi \cite{Hmi} used the moving-plane as well to show that if a simply-connected uniformly rotating patch $D$ satisfies some additional convexity assumption, then $D$ must be a disk. 
        
        On the other hand, it is known that nonradial uniformly rotating patches may exist for $\Om\in(0,\frac{1}{2})$. A pioneering instance is the Kirchhoff ellipse in \cite{Kirchhoff ellipse}, which demonstrated that any ellipse domain $D$ with semiaxes $a,b$ is a uniformly rotating patch with angular velocity $\frac{ab}{(a+b)^2}$. Follow-up studies, including numerical investigations in \cite{numerics1,numerics2,numerics3} have explored this phenomenon. Burbea \cite{Burbea} provided the first rigorous proof of their existence using (local) bifurcation theory. There have been many recent developments in a series of work by Hmidi, Mateu, Verdera, and de la Hoz (see \cite{Del, Hmidi-boundary,HMV}). Notably, \cite{Del} proved the existence of $m$-fold doubly connected nonradial patches with annular velocities, which are dense in $(0,\frac{1}{2})$. In a more contemporary development, Hmidi, Houamed, Roulley and Zerguine \cite{Hmidi-2024-Lake} found the existence of $m$-fold nonradial uniformly rotating patch solutions to the lake equation, under some assumptions of the depth function.
       
        Following the aforementioned researches, many remaining issues still await resolution. Notably, G$\acute{o} $mez-Serrano, Park, J. Shi and Y. Yao \cite{Serr} employed several novel and powerful methods to settle the radial symmetry properties of stationary and uniformly rotating solutions not only for 2D Euler but also for gSQG equations, both in the smooth solution setting and the patch solution settings. Concerning the 2D Euler equation, they establish that any smooth stationary solution characterized by compactly supported and nonnegative vorticity necessarily exhibits radial symmetry, without imposing constraints on the connectedness of the support or the level sets. Extending their analysis to the patch solution setting of the 2D Euler equation, they demonstrate that every uniformly rotating patch, denoted as $D$, with angular velocities $\Omega$ satisfying $\Omega \leq 0$ or $\Omega \geq \frac{1}{2}$ must possess radial symmetry, with both bounds being proven to be optimal. We restate their result as follows:
        \begin{mytheorem}(=Theorem A in \cite{Serr})\label{Thm A}
            Let $D\subset\mathbb{R}^2$ be a bounded open set with $C^{1,\alpha}$ boundary. Assume that $D$ is a stationary/uniformly rotating patch of \textbf{2D Euler equation} for angular velocity $\Omega$. Then $D$ must be radially symmetric if $\Omega< 0$ or $\Omega\geq 1/2$, and radially symmetric up to a translation if $\Omega=0$.
        \end{mytheorem}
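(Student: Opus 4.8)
\emph{Proof proposal.} The route I would take is the one of G\'omez-Serrano--Park--Shi--Yao: recast the rotating-patch condition as a statement about the rotating stream function and exploit a sign dichotomy that is forced precisely by the hypothesis $\Om\le 0$ or $\Om\ge 1/2$.

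\textbf{Step 1: the rotating stream function and the sign dichotomy.} Put $\psi:=N*1_D+\tfrac{\Om}{2}|x|^2$, $N(x)=-\tfrac{1}{2\pi}\ln|x|$. Logarithmic--potential estimates together with $\partial D\in C^{1,\alpha}$ give $\psi\in C^{1,\alpha}(\R^2)$, and $\psi\in C^{2,\alpha}$ up to $\partial D$ from each side. Feeding test functions into \eqref{1-6} shows, as usual, that $1_D\nabla^\perp\psi$ is divergence free, so $\nabla^\perp\psi$ is tangent to $\partial D$ and $\psi$ is locally constant on $\partial D$, say $\psi\equiv a_j$ on the component $\Gamma_j$; moreover $-\Delta\psi=1_D-2\Om$ on $\R^2$. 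The crucial point is that $1_D-2\Om$ is single-signed exactly in the two regimes of the theorem: for $\Om\le 0$ one has $1_D-2\Om\ge 0$, so $\psi$ is superharmonic on all of $\R^2$, and for $\Om\ge 1/2$ one has $1_D-2\Om\le 0$, so $\psi$ is subharmonic, whereas for $\Om\in(0,1/2)$ the source changes sign across $\partial D$ — exactly the room that lets Kirchhoff's rotating ellipses exist. Replacing $\psi$ by $-\psi$ if necessary, assume $\Om\le 0$.

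\textbf{Step 2: global structure from the maximum principle.} Since $\psi(x)=-\tfrac{|D|}{2\pi}\ln|x|+\tfrac{\Om}{2}|x|^2+o(1)\to-\infty$ as $|x|\to\infty$, the superharmonic function $\psi$ attains its global maximum and, by the strong minimum principle, has no interior local minimum; hence $\psi$ is nonconstant on each component of $D$ and on each bounded component of $D^c$. A local analysis at $\partial D$ — comparing the two one-sided solutions of $-\Delta\psi=1-2\Om$ and $-\Delta\psi=-2\Om$ along a $C^{1,\alpha}$ curve on which $\psi$ is constant — together with Hopf's lemma, gives $\nabla\psi\ne 0$ everywhere on $\partial D$. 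Ordering the boundary values $a_j$ and using that the super/sub-level sets of $\psi$ cannot enclose spurious components, one sees that $D$ is topologically a finite family of nested annular regions with a well-defined outermost component. (For $\Om=0$ a bounded component of $D^c$ could a priori carry $\psi\equiv\text{const}$; this degenerate hole must be excluded separately, and its exclusion is also why the $\Om=0$ statement allows a translation.)

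\textbf{Step 3: radial symmetry — the main difficulty.} It remains to upgrade the nested-annuli picture to genuine radial symmetry about the origin (about \emph{some} point when $\Om=0$, by translation invariance of the stationary equation). Were $D$ connected one would be in the situation of Fraenkel \cite{Fra} and Hmidi \cite{Hmi}: $D=\{\psi>a\}$, the relation $-\Delta\psi=1_{\{\psi>a\}}-2\Om$ determines $-\Delta\psi$ from $\psi$, and the moving-plane method adapted to bounded patches, applied in every direction, forces $\psi$ to be radially symmetric and monotone, so $D$ is a disk centered at $0$. The real obstacle — and the technical heart inherited from \cite{Serr} — is to drop \emph{all} connectedness hypotheses: with several components, or with holes, the constants $a_j$ may differ, the problem is no longer globally semilinear, and the classical moving plane does not even start. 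The plan is to run the moving plane locally and to propagate the reflection symmetry across the gaps between components by a global comparison: if, for some direction, the reflected patch differed from $D$, one compares $\psi$ with its reflection $\psi^{*}$ on the relevant half-plane, observes that $\psi-\psi^{*}$ is super/subharmonic where the two configurations agree and has a sign on the dividing line, and reaches a contradiction at a first contact point via the strong maximum principle and Hopf's lemma; the delicate bookkeeping is to identify which boundary curve is touched first and to rule out tangential contact. Once symmetry holds in every direction, $D$ is a union of concentric annuli; for $\Om<0$ the term $\tfrac{\Om}{2}|x|^2$ forces the common center to be the rotation center, which completes the proof, while for $\Om=0$ the translation ambiguity remains.
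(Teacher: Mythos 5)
There is a genuine gap, and it sits exactly where you yourself flag ``the real obstacle'': Step 3 is not a proof but a restatement of the difficulty. The moving-plane scheme you sketch (reflect, compare $\psi$ with $\psi^{*}$, find a first contact point, invoke Hopf) is precisely what breaks down once $D$ is disconnected or has holes: the boundary constants $a_j$ differ from component to component, the reflected configuration can overlap several components with incomparable constants, and there is no single semilinear equation $-\Delta\psi=f(\psi)$ to which the Serrin--Gidas--Ni--Nirenberg machinery applies. This is why Fraenkel needed $\psi$ constant on \emph{all} of $\partial D$ and why Hmidi needed extra convexity for $\Om<0$. The theorem you are quoting is proved in \cite{Serr} by an entirely different mechanism --- an integral identity: one chooses the divergence-free field $\mathbf{v}=\nabla\bigl(p+|x|^2/2\bigr)$ with $p$ solving a Poisson problem with flux-calibrated constants on the holes (Lemma \ref{lem 2-1}), computes $l=\int_D\mathbf{v}\cdot\nabla\psi\,dx$ once by the divergence theorem (getting $l=0$ from the constancy of $\psi$ on each boundary component) and once term by term (getting $l\le 0$ with equality iff every component is a disk or annulus centered correctly, via the sharp inequalities of Lemmas \ref{lem 2-1} and \ref{lem 2-3}). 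That is also the argument the present paper adapts in Section \ref{sec-2} to prove Theorems \ref{Th1-4} and \ref{Th1-5}. Your Steps 1--2 correctly identify the sign dichotomy and the nested-annuli topology, but they are the easy part; without a worked-out replacement for Step 3 the proof is not complete.

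A secondary but real issue: the reduction ``replace $\psi$ by $-\psi$ and assume $\Om\le 0$'' only symmetrizes the sign of $\Delta\psi$, not the problem. For $\Om\ge 1/2$ the function $-\psi$ is not the stream function of any rotating patch with nonpositive angular velocity, the behaviour at infinity is governed by the quadratic term rather than the logarithm, and in \cite{Serr} this regime requires a separate variational argument (with $1/2$ shown to be sharp against the Kirchhoff ellipses). So the two regimes cannot be collapsed into one by a sign flip.
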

        As for the gSQG equation (the related kernel is $-C_\alpha|x|^{-\alpha}$ for $\alpha\in(0,2)$), they obtain analogous symmetry results for angular velocities $\Omega$ satisfying $\Omega \leq 0$ or $\Omega \geq \Omega^\alpha$, where $\Omega^\alpha$ denotes a certain parameter, with the bounds being shown to be sharp. Notably, this analysis requires the additional assumption that the patch is simply connected. Their approach draws heavily from a calculus-of-variations point of view, facilitating novel insights into these symmetry. We also restate the aforementioned result as follows for the convenience of readers:
        \begin{mytheorem}(=Theorem C, Theorem D in \cite{Serr})\label{Thm B}
            Let $D\subset\mathbb{R}^2$ be a bounded simply connected domain with rectifiable boundary. Assume $D$ is a stationary/uniformly rotating patch of \textbf{gSQG equation} for angular velocity $\Omega$. Then $D$ must be a disk, if $\Om\leq 0$ and $\alpha\in(0,2)$; and $D$ must be the unit disk, if $\Omega\geq\Omega^\alpha$, $\alpha\in(0,1)$ and the area of $D$ is $\pi$.
        \end{mytheorem}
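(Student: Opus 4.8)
The plan is to translate the over-determined boundary condition satisfied by a rotating gSQG patch into a variational identity, and then to combine sharp rearrangement inequalities with a maximum principle for the relevant fractional operator. Write $G_\alpha(x)=C_\alpha|x|^{-\alpha}$ and, for a bounded measurable $A\subset\R^2$,
\[
E_\alpha(A):=\frac12\iint_{A\times A}G_\alpha(x-y)\,dx\,dy,\qquad I(A):=\int_A|x|^2\,dx .
\]
Adapting the weak formulation \eqref{1-6} to the gSQG kernel, a rotating patch $D$ obeys $G_\alpha*1_D-\tfrac{\Omega}{2}|x|^2\equiv\mu$ on $\partial D$ for a constant $\mu$, and since $D$ is simply connected this is a single constant on all of $\partial D$ — an important point where simple connectivity is used. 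Computing the first variation of $E_\alpha$ under normal perturbations of $\partial D$, one sees that $D$ is precisely a constrained critical point of $E_\alpha$ subject to $|D'|=|D|$ and $I(D')=I(D)$, with $\Omega/2$ and $\mu$ the Lagrange multipliers; equivalently $D$ is a critical point of $\mathcal F_\Omega(D'):=E_\alpha(D')-\tfrac{\Omega}{2}I(D')$ among sets of fixed area.

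\textbf{From critical point to extremizer (case $\Omega\le0$).} The crux is to upgrade ``critical point'' to ``global extremizer'', since $\mathcal F_\Omega$ has many critical points. For this I would establish the level-set structure of $D$: applying a maximum principle for $(-\Delta)^{1-\alpha/2}$ (whose inverse is, up to a positive constant, convolution with $G_\alpha$) to the co-rotating stream function $\Psi_D:=G_\alpha*1_D-\tfrac{\Omega}{2}|x|^2$ — using that $(-\Delta)^{1-\alpha/2}(G_\alpha*1_D)$ is a positive constant on $D$ and vanishes off $D$, while $-\tfrac{\Omega}{2}|x|^2$ is convex and coercive when $\Omega\le0$ — one aims to show $1_D=1_{\{\Psi_D<\mu\}}$. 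Granting this, the elementary expansion
\[
\mathcal F_\Omega(D')-\mathcal F_\Omega(D)=\big\langle 1_{D'}-1_D,\,\Psi_D\big\rangle+\tfrac12\big\langle 1_{D'}-1_D,\,G_\alpha*(1_{D'}-1_D)\big\rangle
\]
gives $\mathcal F_\Omega(D')\ge\mathcal F_\Omega(D)$ for every $D'$ of the same area: the first term is $\ge0$ because $|D'\setminus D|=|D\setminus D'|$ while $\Psi_D\ge\mu$ on $D'\setminus D$ and $\Psi_D\le\mu$ on $D\setminus D'$, and the second is $\ge0$ because $\widehat{G_\alpha}>0$ for $\alpha\in(0,2)$ makes $G_\alpha$ positive-definite. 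So $D$ is a global minimizer of $\mathcal F_\Omega$ at fixed area. It then remains to identify this minimizer with a disk, using the rigidity in Riesz rearrangement ($E_\alpha(D')\le E_\alpha(D^\ast)$, equality only for balls, $D^\ast$ the centered ball of area $|D'|$) and in the Hardy--Littlewood/bathtub inequality ($I(D')\ge I(D^\ast)$, equality only for $D^\ast$): for $\Omega<0$ the moment term is decisive and pins the minimizer to the centered disk, while for $\Omega=0$ the moment term is absent and one gets a disk of arbitrary center, in agreement with the statement.

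\textbf{The regime $\Omega\ge\Omega^\alpha$.} Here the expansion above works against us — the positive-definiteness of $G_\alpha$ always favours minimization — so I would instead run the dual version, passing to the complement of $D$ inside a large disk, which effectively reverses the sign of the quadratic form and turns the problem into a minimization of the same type. The threshold $\Omega^\alpha$ is exactly the value at which this dual level-set structure becomes available (equivalently, at which the unit disk ceases to be a degenerate critical point in the right direction), and the normalization $|D|=\pi$ is essential because the gSQG angular velocity is not scale-invariant, so $\Omega^\alpha$ is intrinsically attached to the unit disk; the restriction $\alpha\in(0,1)$ is what keeps the relevant potential-theoretic estimates in force.

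\textbf{Main obstacle.} I expect the hard step to be the level-set/maximum-principle lemma for the nonlocal operator $(-\Delta)^{1-\alpha/2}$ with the $-\tfrac{\Omega}{2}|x|^2$ term present. Unlike 2D Euler, where $G_\alpha$ is replaced by the Green function of $-\Delta$ and one has a clean local comparison, here only a nonlocal maximum principle is available — which both explains the simple-connectedness hypothesis and is most delicate in the borderline ranges (small $|\Omega|$ when $\Omega\le0$, and $\Omega$ near $\Omega^\alpha$). There the bare level-set structure can genuinely fail — e.g.\ for a centered disk and small $|\Omega|<0$ the function $\Psi_D$ is not monotone in $|x|$ — so that part of the parameter range must be closed by a limiting/continuity argument from the non-borderline parameters, or by a direct overdetermined-problem rigidity (a moving-plane statement for ``domains on whose boundary the Riesz potential $G_\alpha*1_D$ is constant''). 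Making that robust while carrying through the $C^{1,\alpha}$ boundary regularity is where the novel input of \cite{Serr} is required.
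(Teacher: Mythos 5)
First, a point of order: the paper does not prove Theorem~\ref{Thm B} at all. It is quoted verbatim from \cite{Serr} (Theorems C and D there) purely as background motivating Questions 1 and 2; the only symmetry results the paper itself proves are Theorems~\ref{Th1-4} and~\ref{Th1-5}, which concern the vortex-wave system with the logarithmic kernel and are proved by the integral-identity argument of Section~\ref{sec-2}, not the gSQG equation. So there is no in-paper proof to measure your proposal against, and it can only be assessed relative to the argument in \cite{Serr}.

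On the merits, your starting point is the right one: the overdetermined condition $G_\alpha*1_D-\tfrac{\Omega}{2}|x|^2=\mu$ on $\partial D$ (a single constant precisely because $D$ is simply connected) read as criticality of $E_\alpha(D)-\tfrac{\Omega}{2}I(D)$ at fixed area is exactly the ``calculus-of-variations point of view'' the paper attributes to \cite{Serr}. The genuine gap is the step you yourself flag: your route for $\Omega\le 0$ requires upgrading ``critical point'' to ``global minimizer'', and the only mechanism you offer for this is the level-set identity $1_D=1_{\{\Psi_D<\mu\}}$, without which the sign of $\langle 1_{D'}-1_D,\Psi_D\rangle$ in your expansion is not controlled. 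As you note, that identity can fail (already for a centered disk with small $|\Omega|$ the corotating stream function need not be monotone in $|x|$), and there is no nonlocal maximum principle that rescues it uniformly in the stated parameter range; your proposed fallback (``a limiting/continuity argument'' or an unproved moving-plane statement for Riesz potentials) is not an argument. The proof in \cite{Serr} never makes the global-extremality upgrade: for $\Omega\le 0$ it works only with the \emph{first} variation of the energy along a continuous Steiner symmetrization $\tau\mapsto D_\tau$, playing the criticality (which forces the one-sided derivative at $\tau=0$ to vanish) against a strict monotonicity lemma for the Riesz interaction energy under that flow, whose equality case characterizes sets already symmetric about the hyperplane; the moment term has the favorable sign when $\Omega\le 0$. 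For $\Omega\ge\Omega^\alpha$ the argument is likewise not a complement/duality trick but a direct quantitative comparison exploiting the constancy of $\mu$ together with sharp bounds on the Riesz potential of a set of area $\pi$ (this is where $\alpha\in(0,1)$ and the normalization enter, and why $\Omega^\alpha$ is attached to the unit disk). So while your variational framing matches \cite{Serr}, the path you propose through global minimality would have to be abandoned and replaced by a first-variation argument to close the proof.
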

        \begin{remark}
            Hassainia and Hmidi \cite{Has} showed the existence of nonradial patch in the case $\alpha\in(0,1)$. In that case, the solutions bifurcate at angular velocities given by 
            $\Omega^\alpha_m$ which are increasing function of $m$ and its limit is a finite number $\Omega^\alpha$, so the bound $\Omega^\alpha$ is also optimal.
            %$\Omega^\alpha_m=2^{\alpha-1}\frac{\Gamma(1-\alpha)}{\Gamma(1-\alpha)^2}\left(\frac{\Gamma(1+\alpha/2)}{\Gamma(2-\alpha/2)}-\frac{\Gamma(m+\alpha/2)}{\Gamma(m+1-\alpha/2)}\right)$ which are increasing function of $m$ and its limit is a finite number $\Omega^\alpha=2^{\alpha-1}\frac{\Gamma(1-\alpha)}{\Gamma(1-\alpha)^2}\frac{\Gamma(1+\alpha/2)}{\Gamma(2-\alpha/2)}$, so the bound $\Omega^\alpha$ is also optimal.
        \end{remark}

        Upon comparing the results of Theorem \ref{Thm A} and Theorem \ref{Thm B}, two fundamental questions naturally arise in the case of \textbf{vortex-wave system}.

    \bigskip
        \textbf{QUESTION 1}
        
        Is the assumption $\Omega\leq 0$ suffice to imply the symmetry of the solutions?
        
     \bigskip
        
        \textbf{QUESTION 2}
    
        For $\Omega>0$, does a critical number ($1/2$ in the Euler case, $\Omega^\alpha$ in the gSQG case) exist?
        
    \bigskip
        Our main results will provide a comprehensive answer to both Question 1 and Question 2. The first two results will affirmatively address Question 1, we state them as follows:

        \begin{theorem}\label{Th1-4}
            Let $D$ be a bounded open set with $C^{1,\alpha}$ boundary  and $x_1,...,x_k$ are $k(\geq 1)$ different points such that $x_i\in \mathbb{R}^2\setminus\overline{D}$ for $i=1,...,k$. Assume that $\left(D,x_1,...,x_k\right)$ is a uniformly rotating patch solution of the vortex-wave system \eqref{1-4} with angular velocity $\Omega$. If $\Omega<0$, then $D$ is radially symmetry with respect to the origin, $k=1$ and $x_1$ is located at the origin. More precisely, $D$ consists of finite concentric and nested annulus centered at $x_1$ (origin).
        \end{theorem}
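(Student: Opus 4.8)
The plan is to follow the calculus‑of‑variations strategy of \cite{Serr}, generalised so as to account for the coupling between the patch and the point vortices, in order to force $D$ to be radial about the origin, and then to read off $k=1$ and $x_1=0$ directly from the point‑vortex equilibrium relations. I would first introduce the co‑rotating stream function
\[
\Psi:=1_D*N+\sum_{i=1}^{k}N(\cdot-x_i)+\frac{\Omega}{2}|x|^2 .
\]
The first identity in \eqref{1-6} is exactly the weak form of $\nabla^\perp\Psi\cdot\nabla 1_D=0$; since $\Psi\in C^1$ near $\partial D$ (because $x_i\notin\overline D$), this means $\Psi$ is constant on each connected component of $\partial D$, and as $1_D*N+\sum_iN(\cdot-x_i)$ is harmonic, hence real‑analytic, on $\mathbb{R}^2\setminus(\overline D\cup\{x_1,\dots,x_k\})$, the level‑set structure of $D$ is strongly constrained. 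Moreover $-\Delta\Psi=1_D+\sum_i\delta_{x_i}-2\Omega$ distributionally, and the second identity in \eqref{1-6} says precisely that each $x_i$ is a critical point of the regularised potential $\Psi-N(\cdot-x_i)$, i.e.
\begin{equation}\label{pv-eq}
\nabla(1_D*N)(x_i)+\sum_{j\neq i}\nabla N(x_i-x_j)+\Omega\,x_i=0,\qquad i=1,\dots,k .
\end{equation}
The structural point is that, since $\Omega<0$, one has $-\Delta\Psi\ge 1_D+\sum_i\delta_{x_i}\ge 0$, so $\Psi$ is superharmonic on the whole plane (with $\Psi\to-\infty$ at spatial infinity and $\Psi\to+\infty$ at each $x_i$); this is the feature that drives the symmetry and that fails for $\Omega>0$.

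Next I would view a uniformly rotating patch solution as a critical point of the coupled energy
\begin{equation*}
\begin{aligned}
E(1_A;y_1,\dots,y_k):=\ &\tfrac12\iint 1_A(x)1_A(y)N(x-y)\,dx\,dy+\sum_{i}\int 1_A(x)N(x-y_i)\,dx\\
&+\sum_{i<j}N(y_i-y_j)+\frac{\Omega}{2}\Big(\int 1_A(x)|x|^2\,dx+\sum_i|y_i|^2\Big),
\end{aligned}
\end{equation*}
where $1_A$ ranges over rearrangements of $1_D$ ($|A|=|D|$) and $y_i\in\mathbb{R}^2$: varying $A$ yields the patch equation and varying $y_i$ yields \eqref{pv-eq}. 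Adapting the rigidity argument of \cite{Serr}, whose mechanism is exactly the superharmonicity of $\Psi$ available when $\Omega\le 0$, one shows that this criticality forces $1_D$ to coincide with its symmetric decreasing rearrangement about the origin. Combined with the $C^{1,\alpha}$ regularity of $\partial D$ — a bounded radial open set with $C^{1,\alpha}$ boundary is a finite union of concentric nested annuli, one of which may be a central disk — this gives that $D$ is such a union of annuli centred at the origin. I expect this to be the main obstacle: symmetric rearrangement about the origin increases \emph{every} term of $E$, so the rotating solution is not a priori a maximiser and one must run the finer local rigidity argument; the genuinely new difficulties are that the Dirac sources enter $\Psi$ through $\sum_iN(\cdot-x_i)$ and hence cannot be frozen while $D$ is symmetrised, and that $D$ may be disconnected, so $\Psi$ carries a priori different constants on the different components of $\partial D$. (The centre can alternatively be pinned to the origin via conservation of the centre of vorticity $\int_D x\,dx+\sum_i x_i$, which must vanish for a genuinely rotating configuration when $\Omega\ne0$.)

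Finally, once $1_D$ is radial about the origin, $1_D*N$ is radial, and the two‑dimensional Gauss law (from $\Delta(1_D*N)=-1_D$) gives, for every $x$,
\[
x\cdot\nabla(1_D*N)(x)=-\frac{|D\cap B_{|x|}(0)|}{2\pi}\le 0 .
\]
Taking the inner product of \eqref{pv-eq} with $x_i$, summing over $i$, and using $z\cdot\nabla N(z)=-\tfrac1{2\pi}$ to evaluate $\sum_{i\ne j}x_i\cdot\nabla N(x_i-x_j)=\tfrac12\sum_{i\ne j}(x_i-x_j)\cdot\nabla N(x_i-x_j)=-\tfrac{k(k-1)}{4\pi}$, one obtains
\[
\Omega\sum_{i=1}^{k}|x_i|^2=\sum_{i=1}^{k}\frac{|D\cap B_{|x_i|}(0)|}{2\pi}+\frac{k(k-1)}{4\pi}\ \ge\ 0 .
\]
Since $\Omega<0$ the left‑hand side is $\le 0$, so both sides vanish: $k(k-1)=0$ forces $k=1$ (as $k\ge1$), and $\Omega|x_1|^2=0$ forces $x_1=0$. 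Thus the single point vortex sits at the common centre of the annuli, and since $x_1\notin\overline D$ the innermost annulus has a strictly positive inner radius — which is exactly the asserted structure.
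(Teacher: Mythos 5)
Your proposal correctly sets up the stream function $\Psi$, correctly records that $\Psi$ is locally constant on $\partial D$ (this is the paper's Lemma \ref{2-5}), and your closing computation — dotting the point-vortex equilibrium relation with $x_i$, summing, and using $x\cdot\nabla(1_D*N)(x)=-|D\cap B_{|x|}(0)|/2\pi$ together with $\sum_{i\neq j}x_i\cdot\nabla N(x_i-x_j)=-k(k-1)/4\pi$ to force $k=1$ and $x_1=0$ — is a valid and rather elegant way to finish \emph{once $D$ is known to be radial about the origin}. But that hypothesis is exactly the content of the theorem, and it is the one step you do not carry out: "adapting the rigidity argument of \cite{Serr}\dots one shows that this criticality forces $1_D$ to coincide with its symmetric decreasing rearrangement about the origin" is an assertion, not a proof, and you yourself name the two obstructions (the term $\sum_i\int 1_A N(x-y_i)\,dx$ cannot be frozen while $A$ is symmetrised, since the rearrangement has no useful monotonicity relative to off-centre Dirac potentials; and $D$ may be disconnected with different boundary constants) without resolving them. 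Moreover, the mechanism you invoke is not the one that works here: in \cite{Serr} the patch case with $\Omega\le 0$ is \emph{not} handled by symmetric decreasing rearrangement (that device appears in the smooth setting and for other ranges of $\Omega$); it is handled by the flux-integral argument, and superharmonicity of $\Psi$ alone does not yield radiality of a possibly disconnected, multiply connected patch.

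The paper's proof closes the gap by computing the single quantity $l=\int_D\nabla\bigl(|x|^2/2+p\bigr)\cdot\nabla f_\Omega\,dx$ in two ways, where $p$ is the auxiliary solution of $\Delta p=-2$ from Lemma \ref{lem 2-1} with calibrated fluxes through the holes. The divergence theorem plus constancy of $f_\Omega$ on boundary components gives $l=0$; the direct expansion gives $l$ as a sum of terms, each of which is $\le 0$ when $\Omega<0$ by Lemmas \ref{lem 2-1} and \ref{lem 2-3}, with equality characterizing disks/annuli centred at the origin. Crucially, the point-vortex equilibrium conditions are \emph{inserted into the computation of $l$ itself} (the term $l_2$ in \eqref{3-9} produces $\Omega\sum_m|x_m|^2-k(k-1)/4\pi$ plus the hole-indicator terms), so that radiality of $D$, $k=1$, and $x_1=0$ all fall out \emph{simultaneously} from the equality cases in \eqref{3-11} — there is no need to first symmetrise $D$ with the vortices held fixed, which is precisely the step your sequential scheme cannot justify. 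I would encourage you to redo the argument with the test field $\mathbf{v}=\nabla(|x|^2/2+p)$; your final paragraph then reappears, essentially verbatim, as the identity for $l_2^1$ in the paper.
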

        \begin{theorem}\label{Th1-5}
            Under the assumption of Theorem \ref{Th1-4}, if $\Omega=0$, then $D$ consists of finite concentric and nested annulus centered at the only point vortex $x_1$ (which may not be the origin).
        \end{theorem}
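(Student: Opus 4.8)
The plan is to follow the scheme of the proof of Theorem~\ref{Th1-4}, isolating the two places where the hypothesis $\Omega=0$ alters the conclusion. Throughout set
\[ \psi:=1_D*N+\sum_{i=1}^{k}N(\cdot-x_i),\qquad h:=\sum_{i=1}^{k}N(\cdot-x_i). \]
Since the $x_i$ lie outside $\overline{D}$, $h$ is harmonic in a neighborhood of $\overline{D}$, both $1_D*N$ and $h$ are $C^1$ near $\partial D$, and $-\Delta\psi=1_D$ holds classically near $\overline{D}$. First I would record the same boundary identity used in Theorem~\ref{Th1-4}: with $\Omega=0$ the first equation of \eqref{1-6} says precisely that $\divv\!\big(1_D\nabla^\perp\psi\big)=0$ in $\mathcal D'(\R^2)$, hence $\nabla^\perp\psi$ is tangent to $\partial D$, so $\psi$ is constant on each connected component of the $C^{1,\alpha}$ curve $\partial D$; there are finitely many such components $\Gamma_1,\dots,\Gamma_m$, and $\psi\equiv c_j$ on $\Gamma_j$.

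The heart of the argument is to prove that $D$ is radially symmetric about some point $p\in\R^2$. Here I would reuse the argument of Theorem~\ref{Th1-4} essentially word for word, noting that it does not distinguish between $\Omega<0$ and $\Omega=0$: the function $\psi$ is superharmonic on all of $\R^2$ (indeed $-\Delta\psi=1_D+\sum_{i=1}^{k}\delta_{x_i}\ge0$ as a measure) and $\psi(x)\to-\infty$ as $|x|\to\infty$ since $1_D*N+h\sim-\tfrac{|D|+k}{2\pi}\ln|x|$; hence every superlevel set $\{\psi>t\}$ is bounded, and the rigidity used for $\Omega<0$ goes through with the term $\tfrac{\Omega}{2}|x|^2$ simply set to zero. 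The one genuine difference is that this quadratic term is exactly what broke translation invariance in Theorem~\ref{Th1-4} and thereby pinned the center at the origin; since \eqref{1-5} with $\Omega=0$ is translation invariant, the center $p$ found here need not be the origin. Thus $\partial D$ is a finite disjoint union of circles centered at $p$, and $D$ is a finite union of concentric nested annuli about $p$ (possibly including a disk centered at $p$).

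It then remains to locate the point vortices, which is elementary. Since $D$ is radial about $p$, $1_D*N$ is constant on each circle $\Gamma_j=\{|x-p|=r_j\}$, so by the first step $h=\psi-1_D*N$ is constant on $\Gamma_j$ too. Identifying $\R^2\cong\mathbb C$ and writing $\zeta_i:=x_i-p$, the requirement that $h$ be constant on $\{|x-p|=r_j\}$ is equivalent to $|P(z)|$ being constant on $\{|z|=r_j\}$, where $P(z):=\prod_{i=1}^{k}(z-\zeta_i)$ is monic of degree $k$. Expanding $|P(r_je^{i\theta})|^2$ into its Fourier modes in $\theta$, the coefficient of $e^{ik\theta}$ equals $\overline{P(0)}\,r_j^{k}$ and must therefore vanish; hence $P(0)=0$, i.e.\ $x_i=p$ for some $i$. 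Dividing off the factor $z$ and iterating on the same circle yields $P(z)=z^{k}$, so $x_i=p$ for every $i$; since the $x_i$ are pairwise distinct this forces $k=1$ and $x_1=p$, which is the assertion. (The second equation of \eqref{1-6} is then automatically satisfied, because the velocity generated by a patch radial about $p$ vanishes at $p$ and the sum over $j\neq1$ is empty.)

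I expect the main obstacle to be the middle step: establishing radial symmetry of a possibly \emph{disconnected} patch $D$ from the sole information that $\psi=1_D*N+h$ is constant on each boundary component, with $h$ harmonic near $\overline{D}$ and the total function superharmonic and proper on $\R^2$. One cannot simply quote Fraenkel's theorem because of the harmonic background produced by the point vortices; instead one must carry $h$ — and the vortex positions $x_i$ — through the argument of Theorem~\ref{Th1-4} and check that the sign/monotonicity conditions it relies on are unaffected by taking $\Omega=0$. Granted that, the first and third steps above are routine.
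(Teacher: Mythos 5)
Your overall scheme (rerun the two-way computation of $l$ from Theorem \ref{Th1-4} with the $\Omega$-term dropped, then locate the vortices by potential theory on the boundary circles) is the paper's scheme, and your Fourier/polynomial argument in the last step is a genuinely nice touch: it recovers $k=1$ and $x_1=p$ in one stroke, whereas the paper gets $k=1$ separately from the vanishing of the $-k(k-1)/4\pi$ term and then only has to show that $\ln|x-x_1|$ is constant on a single circle. However, there is a genuine gap in your middle step, and it is exactly at the point you flag as ``the main obstacle'' but then assert rather than prove. Rerunning the $l=0$ identity with $\Omega=0$ does \emph{not} yield that $D$ is radially symmetric about a single point $p$. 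When $\Omega=0$ the fourth term of \eqref{3-11} vanishes identically, and that term is the \emph{only} place where Lemma \ref{lem 2-3} (whose equality case pins a center) enters. What survives is: the first term forces each connected component $D_i$ to be a disk or an annulus about its \emph{own} center $O_i$ (Lemma \ref{lem 2-1} says nothing about where that center is); the second term forces the $D_i$ to be pairwise nested; the fifth forces $k=1$; and the third forces $x_1$ to lie in a hole of \emph{every} $D_i$, so each $D_i$ is an annulus (this also kills your parenthetical ``possibly including a disk centered at $p$''). Nothing in this list makes the $O_i$ coincide --- translation invariance explains why a common center need not be the origin, but it does not produce a common center, since moving the components independently is not a translation of the configuration.

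The missing concentricity has to be extracted from the constancy of $\psi$ on each boundary circle, by an inside-out induction: on $\Gamma_1^{out}$ the contributions of the outer annuli are constant by Lemma \ref{lem 2-4}(b) regardless of their centers, so $N(x-x_1)+1_{D_1}*N$ is constant there, which forces $O_1=x_1$; then on $\Gamma_{k+1}^{out}$ the already-centered inner annuli contribute $-\frac{\sum_{i\le k}|D_i|}{2\pi}\ln|x-x_1|$ by Lemma \ref{lem 2-4}(a), forcing $O_{k+1}=x_1$; and so on. This is precisely the second half of the paper's proof. Your polynomial argument cannot substitute for it as written, because it takes as input that $1_D*N$ is already constant on each $\Gamma_j$, which is exactly the concentricity being sought; but it slots naturally into the base case of that induction (applied on $\Gamma_1^{out}$, where only $h$ and $1_{D_1}*N$ are in play, it would simultaneously give $k=1$ and $x_1=O_1$). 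With that induction supplied, your proof closes; without it, the step ``thus $\partial D$ is a finite disjoint union of circles centered at $p$'' does not follow from what precedes it.
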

        \begin{remark}
            In the framework of vortex-wave systems, the implications of solutions' symmetry encompass the symmetry of the background vortex patch region, the number of point vortices, and the relative positions between point vortices and the background vortices.
        \end{remark}
        \begin{remark}
            If $D$ consists of finite concentric and nested annulus centered at the only point vortex $x_1$, then one can check easily that $(D,x_1)$ is a stationary patch solution to the vortex-wave system \eqref{1-4}. Moreover, if $x_1=0$, then $(D,x_1)$ is a uniformly rotating patch solution with any angular velocity $\Omega\in\mathbb{R}$.
        \end{remark}

        Regarding Question 2, by the Crandall-Rabinowitz theorem, we have a negative answer, which also highlights the difference between vortex-wave system and either 2D Euler equation or gSQG equation. To achieve this goal, we need to construct some nonradial solutions to the vortex-wave system \eqref{1-4} with a sequence of angular velocities going to infinity. In fact, we have a stronger result. The set of angular velocities corresponding to the nonradial uniformly rotating solutions is dense in $(0,\infty)$. We state it as follows:
        %In this paper, we will derive the corresponding contour dynamics equations for the background vortex to the vortex-wave system and the nonradial solution will be the nontrival zero to a certain functional (see \textbf{Lemma} \ref{Countor equation lemma}). The spectra of the functional are studied in detail and the following result are obtained.
        \begin{theorem}\label{main theorem 2}
            %Let $0<a_1<a_2$ satisfies $\pi a_1^2>1$, $a_2=m a_1$ and $m>\sqrt{\frac{2\pi a_1^2-2}{\pi a_1^2-2}}$. Then there exists $N(a_1,a_2)\in\N$, such that \textcolor{red}{there exists two curves of doubly-connected V-states bifurcation from the stationary solution of \eqref{VWE k-1}}
            Let $0<a_1<a_2$. Then there exists $N(a_1,a_2)\in \N$ such that, for all $n\geq N(a_1,a_2)$, there are two curves of uniformly rotating patch solutions of the vortex-wave system \eqref{1-4} in the case of $k=1$, bifurcation from the stationary solution 
            \begin{equation*}
                \left(\omega,(x_1^1,x_1^2)\right)=\left(1_{\{x\in R^2:a_1<|x|<a_2\}},(0,0)\right),
            \end{equation*}
            at the angular velocity
            \begin{equation*}
                \Om_{n}^{\pm}=\left(\frac{a_2^2-a_1^2}{4a_2^2}+\frac{1}{4\pi a_2^2}+\frac{1}{4\pi a_1^2}\right)\pm\frac{1}{2}\sqrt{\left(\frac{a_2^2-a_1^2}{2a_2^2}+\frac{1}{2\pi a_2^2}-\frac{1}{2\pi a_1^2}-\frac{1}{n}\right)^2-\frac{1}{n^2}\left(\frac{a_1}{a_2}\right)^{2n}}.
            \end{equation*}
            Moreover,
            \begin{equation}\label{alimit of Om plus}
            	\lim_{n\to \infty}\Om_{n}^+=\max\left\{\frac{a_2^2-a_1^2}{2a_2^2}+\frac{1}{2\pi a_2^2},\ \frac{1}{2\pi a_1^2}\right\}
            \end{equation}
            and
            \begin{equation}\label{alimit of Om minus}
            	\lim_{n\to\infty}\Om_{n}^-=\min\left\{\frac{a_2^2-a_1^2}{2a_2^2}+\frac{1}{2\pi a_2^2},\ \frac{1}{2\pi a_1^2}\right\}.
            \end{equation}
        \end{theorem}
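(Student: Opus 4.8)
The plan is to recast the problem as a contour dynamics equation and apply the Crandall--Rabinowitz bifurcation theorem, following the scheme developed for doubly connected $V$-states of the Euler equation in \cite{Del} and incorporating the point vortex. We work within the class $k=1$ and bifurcate from the annulus, with the vortex at the origin. A preliminary remark is that along any branch of $n$-fold symmetric patches with $n\geq 2$ the point vortex equation in \eqref{1-6} is automatically satisfied by $x_1=(0,0)$: the vector $\tfrac{1}{2\pi}\int(0-y)^\perp|y|^{-2}1_D(y)\,dy$ is invariant under rotation by $2\pi/n$, hence vanishes. It therefore suffices to treat the transport equation, which is equivalent to requiring the stream function
\begin{equation*}
\Psi:=1_D*N+N(\,\cdot\,)+\tfrac{\Omega}{2}|\,\cdot\,|^2
\end{equation*}
to be constant on each connected component of $\partial D$. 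Writing the inner and outer boundary curves as polar graphs $r_j(\theta)=a_j+f_j(\theta)$ for $j=1,2$, with $f=(f_1,f_2)$ small, even in $\theta$ and $n$-fold symmetric, this reads
\begin{equation*}
F_j(\Omega,f)(\theta):=\frac{d}{d\theta}\Psi\!\left(r_j(\theta)e^{i\theta}\right)=0,\qquad j=1,2,
\end{equation*}
and $F=(F_1,F_2)$ satisfies $F(\Omega,0)=0$ for every $\Omega$, since the unperturbed annulus is radial, hence stationary.

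The next step is to fix $X$ as the space of even, $n$-fold symmetric, zero-mean pairs in $\bigl(C^{1,\alpha}(\mathbb{T})\bigr)^2$ (zero mean so that the trivial kernel generated by perturbing the radii is excluded) and $Y$ as the space of odd, $n$-fold symmetric, zero-mean pairs, and to show $F:\mathbb{R}\times B_X(0,\varepsilon)\to Y$ is well defined and of class $C^1$ (in fact $C^\infty$). The self-interaction $1_D*N$ is handled as in the Euler $V$-state literature via the $C^{1,\alpha}$ regularity of the Newtonian potential of a patch with $C^{1,\alpha}$ boundary; the terms $N(\,\cdot\,)$ and $\tfrac{\Omega}{2}|\,\cdot\,|^2$ are smooth on a neighbourhood of $\overline D$ because the origin lies in the hole $\{|x|<a_1\}$, so they only add explicit lower order terms. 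One then computes the linearization $D_fF(\Omega,0)$. Expanding the kernel $-\tfrac{1}{2\pi}\ln|re^{i\theta}-se^{i\phi}|$ in its Fourier series in $\theta-\phi$ shows that $D_fF(\Omega,0)$ is block-diagonal with respect to the modes $\cos(jn\theta)$, $j\geq 1$, acting on the $j$-th block through a $2\times 2$ matrix $M_{jn}(\Omega)$ that is affine in $\Omega$ with linear part $\Omega\,\mathrm{Id}$: the point vortex contributes the diagonal terms $-\tfrac{1}{2\pi a_j^2}$ (its angular velocity on the circle of radius $a_j$), the self-interaction contributes the Euler part ($\tfrac{a_2^2-a_1^2}{2a_2^2}$ on the outer boundary, $0$ on the inner by the shell theorem) together with the mode corrections $\mp\tfrac{1}{2jn}$, and the cross-interaction of the two circles contributes off-diagonal entries proportional to $(a_1/a_2)^{jn}$ whose product equals $-\tfrac{1}{4(jn)^2}(a_1/a_2)^{2jn}$.

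The dispersion relation is $\det M_n(\Omega)=0$, which, after evaluating the entries, is exactly the quadratic in $\Omega$ whose roots are $\Om_n^+$ and $\Om_n^-$ in the statement. To apply Crandall--Rabinowitz at each root one verifies: (i) $\Om_n^\pm$ are real and distinct, i.e.\ the discriminant
\begin{equation*}
\left(\tfrac{a_2^2-a_1^2}{2a_2^2}+\tfrac{1}{2\pi a_2^2}-\tfrac{1}{2\pi a_1^2}-\tfrac1n\right)^2-\tfrac{1}{n^2}\left(\tfrac{a_1}{a_2}\right)^{2n}
\end{equation*}
is positive; writing $A:=\tfrac{a_2^2-a_1^2}{2a_2^2}+\tfrac{1}{2\pi a_2^2}$ and $B:=\tfrac{1}{2\pi a_1^2}$, this discriminant tends to $(A-B)^2\geq 0$ and, even in the degenerate case $A=B$, the $\tfrac1n$ term keeps it strictly positive, so it holds for all $n\geq N(a_1,a_2)$; (ii) $D_fF(\Om_n^\pm,0)$ is Fredholm of index $0$, which follows because for $n$ large it is the sum of an invertible diagonal operator $\mathrm{diag}(\Om_n^\pm-A,\Om_n^\pm-B)$ and a compact operator; (iii) its kernel is one-dimensional, i.e.\ $\det M_{jn}(\Om_n^\pm)\neq 0$ for all $j\geq 2$; and (iv) the transversality condition, which, since $D^2_{\Omega f}F(\Om_n^\pm,0)=\mathrm{Id}$, reduces to $\Om_n^\pm$ being a simple root of $\det M_n$, hence once more to $\Om_n^+\neq\Om_n^-$. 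The main obstacle is (iii). For $n$ large the roots lie within $O(1/n)$ of $A-\tfrac1{2n}$ and $B+\tfrac1{2n}$ (when $A>B$; symmetrically when $A<B$), and one finds $\det M_{jn}(\Om_n^+)\approx\tfrac{|A-B|}{2n}\bigl(\tfrac1j-1\bigr)$, which is bounded away from $0$ relative to $1/n$ for every $j\geq 2$; making this rigorous, with the $(a_1/a_2)^{jn}$ corrections controlled uniformly in $j$, forces the choice of $N(a_1,a_2)$ and is the technical core of the argument.

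Once the hypotheses hold, the Crandall--Rabinowitz theorem yields, for each $n\geq N(a_1,a_2)$ and each sign, a local $C^1$ curve $s\mapsto(\Om_n^\pm(s),f_n^\pm(s))$ of solutions of \eqref{1-6} (with the vortex at the origin, by the preliminary remark) emanating from $(\Om_n^\pm,0)$; since the leading Fourier mode of $f_n^\pm(s)$ is a nonzero multiple of $\cos(n\theta)$, these patches are non-annular, and the two curves are distinct because $\Om_n^+\neq\Om_n^-$. Finally the limits \eqref{alimit of Om plus}--\eqref{alimit of Om minus} follow by letting $n\to\infty$ in the closed form: $\tfrac1n\to0$ and $(a_1/a_2)^{2n}\to0$ since $0<a_1<a_2$, so the square root tends to $|A-B|$ and $\Om_n^\pm\to\tfrac{A+B}{2}\pm\tfrac12|A-B|$, which are precisely $\max\{A,B\}$ and $\min\{A,B\}$.
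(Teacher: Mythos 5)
Your proposal is correct, and the dispersion matrix, the roots $\Om_n^\pm$, and the limit computation all agree with the paper; but the functional-analytic setup is genuinely different. You pin the point vortex at the origin, observe that the $n$-fold symmetry of the patch (for $n\ge 2$) forces the velocity at the origin to vanish so that the point-vortex ODE is automatic, and then run Crandall--Rabinowitz for the two boundary perturbations alone in spaces of $n$-fold symmetric functions. The paper instead keeps the vortex position $x_1^1$ as a third bifurcation unknown, adds the scalar equation $G$ for it, and works in the full space of even perturbations $\sum_{m\ge 1} f_m\cos(m\theta)$ with no imposed fold symmetry. The trade-offs are real: your route needs only $\det M_{jn}(\Om_n^\pm)\neq 0$ for the multiples $j\ge 2$ (your item (iii), correctly identified as the technical core, with the asymptotics $\det M_{jn}(\Om_n^+)\approx \tfrac{|A-B|}{2n}(\tfrac1j-1)$ matching what one gets from the paper's formulas), whereas the paper must exclude spectral collisions with \emph{every} mode $j\neq n$ — handled via strict monotonicity of $n\mapsto\Om_n^\pm$ and the distinctness of the two limits — and must additionally analyze a modified matrix $\widetilde M_1$ for the mode-$1$/vortex-displacement coupling, which is absent from your reduced problem. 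Conversely, the paper's branches are a priori not $n$-fold symmetric and allow the vortex to leave the origin at higher order, so it captures potentially more general solutions, at the cost of the extra unknown and the global spectral bookkeeping. Both arguments prove the stated theorem; yours is the shorter path and is the one standard in the doubly connected $V$-state literature you cite, provided you do carry out the uniform-in-$j$ control of the $(a_1/a_2)^{jn}$ corrections that you flag.
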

        \begin{remark}
            We can easily show that 
            $\{\Om_n^\pm,\ n\geq N(a_1,a_2),0<a_1<a_2<\infty\}$ is dense in $(0,\infty)$, hence the critical number in Question 2 does not exist. 
            In fact, for $c\in(0,\infty)$, if we take $a_1=\sqrt{1/2\pi c}$, then either $\Omega_n^+\to c$ or $\Omega_n^-\to c$, as $n\to\infty$, by \eqref{alimit of Om plus} and \eqref{alimit of Om minus}.
        \end{remark}

\section{Symmetry of uniformly rotating patch solutions for \texorpdfstring{$\Omega\leq0$}{}}\label{sec-2}
    \subsection{Notations and preliminaries}\label{subsec-2-1}
        In this section, we will give some notations and some lemmas used to prove Theorem \ref{1-4} and Theorem \ref{1-5}.

        Jordan-Schoenflies theorem guarantees that any simple closed curve $\Gamma$ divides $\mathbb{R}^2$ into two domain, one of them is bounded, which is denoted by int$\left(\Gamma\right)$.

        Two disjoint simply closed curves $\Gamma_1$ and $\Gamma_2$ are called nested if $\Gamma_1\subset$ int$\left(\Gamma_2\right)$ or $\Gamma_2\subset$ int$\left(\Gamma_1\right)$. Similarly, two domains $D_1$ and $D_2$ are nested if one is contained in a hole of the other one.

        $1_D$ always denotes the indicator function of $D\subset\mathbb{R}^2$. Furthermore, for a statement $S$, we define
        \begin{align}\label{2-1}
            1_S=
            \begin{cases}
                1\quad\text{if}~S~\text{is true},\\
                0\quad\text{if}~S~\text{is false}.
            \end{cases}
        \end{align}

        For an open set $V\subset\mathbb{R}^2$, in any boundary integral like $\int_{\partial V}\textbf{f}\cdot \textbf{n}~dS$, the vector $\textbf{n}$ is taken as the outer normal of the open set $V$ in that integral. 
        Furthermore, we define 
        \begin{equation}\label{1-7}
            l=\int_D\textbf{v}\cdot\nabla\left(\omega*N+\sum_{i=1}^{k}N(x-x_i)+\frac{\Omega}{2}|x|^2\right)dx.
        \end{equation}
        We will show that $\textbf{v}$ is divergence-free implies $l=0$ (see \textbf{Step.1} in the proof of \textbf{Theorem} \ref{Th1-4}), on the other hand, $D$ is not radial implies $l\neq 0$ for suitable divergence-free vector field $\textbf{v}$ (see \textbf{Step.2} in the proof of \textbf{Theorem} \ref{Th1-4})).
        
        In \eqref{1-7}, we need to seek a divergence-free vector field $\textbf{v}$ to derive a contradiction. Since each connected component of the background vorticity can be either simply connected, multiply connected, or possess an arbitrary number of holes, a suitable function should satisfy a Poisson equation within each connected component, with appropriate constant values prescribed on the corresponding boundaries. Hence, Lemma 2.6 and Proposition 2.8 in \cite{Serr} precisely provide the necessary framework. We state them as follows:
        \begin{lemma}\label{lem 2-1}
            Let $D\subset\mathbb{R}^2$ be a bounded domain with $C^{1,\alpha}$ boundary. Assume that $D$ has $n$ holes with $n\geq 0$, and let $h_1,...,h_n\subset\mathbb{R}^2$ denote the n holes of $D$ (each $h_i$ is a simply connected bounded domain) .The outer boundary of $D$, which we denote by $\partial D_0$. 
            
            Then there exist positive constants $\left\{c_i\right\}_{i=1}^n$ such that the solution $p$ to the Poisson equation
            \begin{align*}
                \lset{
                    \begin{array}{ll}
                        \Delta p=-2&\text{in}\ D,\\
                        p=c_i&\text{on}\ \partial h_i \ ,\ \ \text{for}\ i=1,...,n,\\
                        p=0&\text{on}\ \partial D_0,
                    \end{array}
                }
            \end{align*}
        satisfies
        \begin{equation*}
            \int_{\partial h_i}\nabla p\cdot\textbf{n}~dS=-2|h_i|\ \ \text{for}\  i=1,...,n,
        \end{equation*}
        where $|\cdot|$ denotes the 2D Lebesgue measure. Furthermore, the following two estimates hold:
        \begin{equation}\label{2-2}
            \sup_{\overline{D}}p\leq\frac{|D|}{2\pi}
        \end{equation}
        and
        \begin{equation}
            \int_Dp(x)\,dx\leq\frac{|D|^2}{4\pi}.
        \end{equation}
        For each of the two inequalities above, the equality is achieved if and only if $D$ is either a disk or an annulus.
        \end{lemma}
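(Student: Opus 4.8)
The plan is to treat two issues in turn: the existence of the positive constants $c_i$ together with the flux identities, and then the two quantitative bounds with their equality cases.

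For the first issue, I would fix the zero-boundary torsion function $p_0$, solving $\Delta p_0=-2$ in $D$ with $p_0=0$ on all of $\partial D$, and for $j=1,\dots,n$ the harmonic measure $\omega_j$ of $\partial h_j$, i.e.\ the harmonic function in $D$ equal to $1$ on $\partial h_j$ and to $0$ on the other components of $\partial D$. The solution of the stated Dirichlet problem with boundary value $c_i$ on $\partial h_i$ is $p=p_0+\sum_j c_j\omega_j$, so the requirement $\int_{\partial h_i}\nabla p\cdot\mathbf{n}\,dS=-2|h_i|$ becomes a linear system $Ac=\beta$ with $A_{ij}=\int_{\partial h_i}\nabla\omega_j\cdot\mathbf{n}\,dS$; Green's identity makes $A$ symmetric and the identity $\xi^{\top}A\xi=-\int_D|\nabla(\sum_j\xi_j\omega_j)|^2\,dx$ makes it negative definite, hence invertible, so a unique $c\in\mathbb{R}^n$ exists. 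Positivity of each $c_i$ then follows from the maximum principle: $p$ is superharmonic, so $\min_{\overline{D}}p=\min(0,\min_i c_i)$; if this minimum were $\le 0$, it would be attained on some $\partial h_{i_0}$, and Hopf's lemma at that minimum point (legitimate since $\partial D$ is $C^{1,\alpha}$ and $p$ is nonconstant) would force $\nabla p\cdot\mathbf{n}>0$ pointwise on $\partial h_{i_0}$, contradicting $\int_{\partial h_{i_0}}\nabla p\cdot\mathbf{n}\,dS=-2|h_{i_0}|<0$. Hence $\min_i c_i>0$, and the flux identities hold by construction.

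For the bounds I would run a co-area / isoperimetric argument. Since $p=p_0+\sum_j c_j\omega_j$ is real-analytic, superharmonic, and $=0$ on $\partial D_0$, it is strictly positive in $D$; put $\mu(t):=|\{x\in D:p(x)>t\}|$, so $\mu(0^{+})=|D|$, $\mu$ is absolutely continuous, and for a.e.\ $t\in(0,\sup_{\overline{D}}p)$ the level set $\{p=t\}$ is a finite union of analytic Jordan curves with $-\mu'(t)=\int_{\{p=t\}}|\nabla p|^{-1}\,d\mathcal{H}^{1}$. Let $\widehat{V}_t$ be the set obtained from $V_t:=\{p>t\}$ by filling the holes $h_i$ with $c_i>t$. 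The divergence theorem for $\nabla p$ on $V_t$, using the flux identities on exactly those holes, gives the key identity $\int_{\{p=t\}}|\nabla p|\,d\mathcal{H}^{1}=2|\widehat{V}_t|$. Since $\partial\widehat{V}_t\subset\{p=t\}$ (the filled holes have become interior, while each remaining hole and $\partial D_0$ are separated from $V_t$ by a layer where $p<t$), the isoperimetric inequality and Cauchy–Schwarz give $4\pi|\widehat{V}_t|\le\mathrm{Per}(\widehat{V}_t)^{2}\le\mathcal{H}^{1}(\{p=t\})^{2}\le\Big(\int_{\{p=t\}}|\nabla p|\Big)\Big(\int_{\{p=t\}}|\nabla p|^{-1}\Big)=2|\widehat{V}_t|\,(-\mu'(t))$, hence $-\mu'(t)\ge 2\pi$ for a.e.\ $t$. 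Therefore $\mu(t)\le|D|-2\pi t$; since $\mu\ge 0$ this forces $\sup_{\overline{D}}p\le|D|/2\pi$, and the layer-cake formula $\int_D p=\int_0^{\sup_{\overline{D}}p}\mu(t)\,dt\le\int_0^{|D|/2\pi}(|D|-2\pi t)\,dt=|D|^{2}/4\pi$ gives the second bound.

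The main obstacle is the part just sketched: making the co-area analysis rigorous requires controlling, for a.e.\ $t$, the topology of $V_t$ relative to the holes (so that the divergence theorem applies and $\mathrm{Per}(\widehat{V}_t)\le\mathcal{H}^{1}(\{p=t\})$ genuinely holds, ruling out stray ``moat'' configurations), and the equality cases need the rigidity step: equality in either bound forces equality in the isoperimetric and Cauchy–Schwarz inequalities for a.e.\ $t$, so every $\widehat{V}_t$ is a disk on whose boundary $|\nabla p|$ is constant, and one must then use the nestedness of the $\widehat{V}_t$ together with $\Delta p=-2$ to conclude that $p$ is radial about a common centre and that at most one hole can be present, i.e.\ that $D$ is a disk or a single annulus. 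The existence part in the second paragraph is comparatively routine, the only real ingredients being the negative definiteness of $A$ and the Hopf-lemma sign argument.
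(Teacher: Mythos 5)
First, a point of context: the paper does not prove this lemma at all --- it is imported verbatim (as Lemma 2.6 and Proposition 2.8) from G\'omez-Serrano--Park--Shi--Yao \cite{Serr}, so there is no in-paper proof to compare against. Your reconstruction follows the same strategy as that source. The existence part is sound: writing $p=p_0+\sum_j c_j\omega_j$, the Gram-type matrix $A_{ij}=\int_{\partial h_i}\nabla\omega_j\cdot\mathbf{n}\,dS$ is symmetric by Green's identity and satisfies $\xi^{\top}A\xi=-\int_D|\nabla(\sum_j\xi_j\omega_j)|^2\,dx$ (with the paper's convention that $\mathbf{n}$ is the outer normal of $h_i$), which vanishes only for $\xi=0$ because the combination vanishes on $\partial D_0$; and the Hopf argument for $c_i>0$ works, with the caveat that for a $C^{1,\alpha}$ boundary you must invoke the $C^{1,\mathrm{Dini}}$ version of the boundary point lemma rather than the interior-ball version. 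The inequality part is also essentially complete: the identity $\int_{\{p=t\}}|\nabla p|\,d\mathcal{H}^1=2|\widehat V_t|$ is exactly where the flux normalization $\int_{\partial h_i}\nabla p\cdot\mathbf{n}\,dS=-2|h_i|$ enters, the inclusion $\partial\widehat V_t\subset\{p=t\}$ that you worry about follows directly from continuity of $p$ (each unfilled hole and $\partial D_0$ carry boundary values $<t$, hence are insulated from $\overline{V_t}$ by an open collar where $p<t$), and real-analyticity of $p$ plus Sard handles the a.e.\ regularity of level sets and the absolute continuity of $\mu$.

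The genuine gap is the equality characterization, which is part of the statement and is actually what the paper uses (in Step 2 of the proof of Theorem \ref{Th1-4}, the vanishing of the first term of \eqref{3-11} is converted into ``each $D_i$ is a disk or an annulus'' precisely via this rigidity). You correctly identify that equality forces, for a.e.\ $t$, that $\widehat V_t$ is a disk and $|\nabla p|$ is constant on $\{p=t\}$, but you stop there. What remains is to show: (i) the family of disks $\widehat V_t$ is nested and hence concentric (constancy of $|\nabla p|$ on each circle plus $-\mu'(t)=2\pi$ pins down the radii, and one must rule out the centres drifting with $t$, e.g.\ by noting that $p$ then solves an ODE in the radial variable of each circle and is therefore radial about a fixed centre by analytic continuation); (ii) radiality of $p$ with $\Delta p=-2$ and the flux conditions forces at most one hole, so $D$ is a disk or a single annulus; and (iii) the converse direction, i.e.\ that equality \emph{is} attained for disks and annuli, which is an explicit computation ($p=(b^2-|x|^2)/2$ on the annulus $a<|x|<b$ once the flux condition kills the logarithmic term) that your write-up omits entirely. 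As it stands the proposal proves the two inequalities but not the ``if and only if'' clause.
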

        \begin{remark}
            We will choose $v=-\nabla(|x|^2/2+p)$ in the proof of Theorem \ref{1-4} to show the symmetry of $D$.
        \end{remark}
        In the case of $\Omega<0$, we need the following lemma (see \cite{Serr}):
        \begin{lemma}\label{lem 2-3}
            Let $D$ be a bounded domain with $C^{1,\alpha}$ boundary, and let $p$ be as in Lemma \ref{2-1}. Then we have
            \begin{equation}
                -\int_{D}\nabla p\cdot x\,dx=\int_{D}\left|\nabla p\right|^2\,dx\leq\int_{D}|x|^2\,dx.
            \end{equation}
            Furthermore, in the inequality, equality is achieved if and only if $D$ is a disk or annulus centered at the origin.
        \end{lemma}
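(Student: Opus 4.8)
The plan is to prove the identity $-\int_D\nabla p\cdot x\,dx=\int_D|\nabla p|^2\,dx$ first, then deduce the inequality from Cauchy--Schwarz, and finally read off the rigidity statement from the Cauchy--Schwarz equality conditions together with the boundary data defining $p$.

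For the identity I would introduce the auxiliary field $\mathbf{w}:=\nabla p+x$. It is divergence free, since $\divv\mathbf{w}=\Delta p+\divv x=-2+2=0$ by Lemma~\ref{lem 2-1}. Two further properties make all boundary contributions vanish when we pair $\mathbf{w}$ with $\nabla p$: first, $p$ is locally constant on $\partial D$, equal to $0$ on the outer boundary $\partial D_0$ and to $c_i$ on $\partial h_i$; second, the flux of $\mathbf{w}$ through each hole boundary is zero, because $\int_{\partial h_i}x\cdot\mathbf{n}\,dS=\int_{h_i}\divv x\,dx=2|h_i|$ by the divergence theorem, while $\int_{\partial h_i}\nabla p\cdot\mathbf{n}\,dS=-2|h_i|$ by Lemma~\ref{lem 2-1} (with $\mathbf{n}$ the outward normal of $h_i$), and these cancel. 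Hence, integrating by parts,
\[
\int_D|\nabla p|^2\,dx+\int_D x\cdot\nabla p\,dx=\int_D\mathbf{w}\cdot\nabla p\,dx=-\int_D p\,\divv\mathbf{w}\,dx+\int_{\partial D}p\,(\mathbf{w}\cdot\mathbf{n})\,dS=0,
\]
the last boundary integral vanishing term by term, which is exactly the claimed identity.

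The inequality then follows at once by Cauchy--Schwarz:
\[
\int_D|\nabla p|^2\,dx=-\int_D\nabla p\cdot x\,dx\le\int_D|\nabla p|\,|x|\,dx\le\Big(\int_D|\nabla p|^2\,dx\Big)^{1/2}\Big(\int_D|x|^2\,dx\Big)^{1/2},
\]
and since $\Delta p=-2$ forces $p$ to be nonconstant, we have $\int_D|\nabla p|^2\,dx>0$ and may divide to obtain $\int_D|\nabla p|^2\,dx\le\int_D|x|^2\,dx$. For the equality case, equality forces both Cauchy--Schwarz steps above to be equalities, hence $\nabla p\parallel x$, $\nabla p\cdot x\le0$, and $|\nabla p|=\mu|x|$ a.e.\ for a constant $\mu\ge0$, i.e.\ $\nabla p=-\mu x$ a.e.\ in $D$; plugging this into $\Delta p=-2$ gives $\mu=1$, so on the connected set $D$ we obtain $p=-|x|^2/2+\mathrm{const}$, and then $p=0$ on $\partial D_0$ and $p=c_i$ on $\partial h_i$ force every boundary component of $D$ to be a circle centered at the origin, so $D$ is a disk or an annulus centered at the origin. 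The converse is a one-line computation: for such a $D$ with outer radius $R$ one has $p=(R^2-|x|^2)/2$, so $\nabla p=-x$ and the three quantities coincide.

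I expect the only point requiring care to be the bookkeeping of the normal orientations on the hole boundaries, needed so that the flux identity of Lemma~\ref{lem 2-1} is applied with the correct sign that makes the flux of $\mathbf{w}=\nabla p+x$ through each $\partial h_i$ vanish; the remainder is routine integration by parts and Cauchy--Schwarz.
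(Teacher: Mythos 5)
Your proof is correct. Note that the paper does not prove this lemma at all --- it is quoted verbatim from \cite{Serr} (their Lemma 2.9), and your argument is essentially the one given there: the identity via integration by parts using $\Delta p=-2$, the local constancy of $p$ on $\partial D$, and the flux normalization $\int_{\partial h_i}\nabla p\cdot\mathbf{n}\,dS=-2|h_i|$, followed by Cauchy--Schwarz and the rigidity analysis $\nabla p=-x$. The only cosmetic difference is that you package the two integrations by parts into a single one for the divergence-free field $\mathbf{w}=\nabla p+x$, which handles the orientation bookkeeping on the hole boundaries cleanly; the one detail worth a sentence in a final write-up is why the concentric-circle boundary components permit at most one hole, so that $D$ is indeed a disk or a single annulus.
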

        In the case of $\Omega=0$, we need the following lemma (see \cite{Serr}):
        \begin{lemma}\label{lem 2-4}
            Assume that $g\in L^\infty(\mathbb{R}^2)$ is radially symmetric about some $O\in\mathbb{R}^2$, and is compactly supported in $B_R(O)$. Then $\eta:=g*N$ satisfies the following:
            \begin{itemize}
                \item[(a)] $\eta(x)=\frac{\int_{\mathbb{R}^2}gdx}{2\pi}\ln|x-O|$ for all $x\in B^c_R(O)$.\vspace{0.25em}
                \item[(b)] if in addition we have g=0 in $B_r(O)$ for some $r\in(0,R)$, then $\eta=const$ in $B_r(O)$.
            \end{itemize}
        \end{lemma}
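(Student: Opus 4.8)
The plan is to recognize this as the planar (logarithmic) analogue of Newton's shell theorem and to derive both parts from a single mean-value identity for the kernel $\ln|\cdot|$. After translating so that $O$ is the origin (both the hypotheses on $g$ and the claimed conclusions are translation-equivariant), the first observation is that $\eta=g*N$ is itself radially symmetric about $O$: since $g$ is radial and $N(x)=-\frac{1}{2\pi}\ln|x|$ is radial, for any rotation $\mathcal R$ about $O$ the change of variables $y\mapsto\mathcal R y$ in $\eta(\mathcal R x)=\int_{\mathbb{R}^2} g(y)N(\mathcal R x-y)\,dy$ gives $\eta(\mathcal R x)=\eta(x)$, using $g\circ\mathcal R=g$ and $N\circ\mathcal R=N$. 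Hence $\eta(x)$ depends only on $\rho:=|x-O|$.

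The computational engine is the identity
\[
\frac{1}{2\pi}\int_0^{2\pi}\ln\big|\rho e^{i\phi}-s e^{i\theta}\big|\,d\theta=\ln\max(\rho,s),
\]
which I would establish by noting that the left-hand side is harmonic and radial in $\rho e^{i\phi}$ off the circle $\{\rho=s\}$, hence of the form $a+b\ln\rho$ on each side; finiteness at the center forces $b=0$ and value $\ln s$ on $\{\rho<s\}$, the far-field asymptotics force $b=1$ on $\{\rho>s\}$, and continuity across $\{\rho=s\}$ makes the additive constant vanish (equivalently, this is Jensen's formula). Writing $g(y)=G(|y|)$ and decomposing into concentric shells, Fubini (justified since $\ln$ is locally integrable and $g\in L^\infty_c(\mathbb{R}^2)$) gives
\[
\eta(x)=-\frac{1}{2\pi}\int_0^{\infty}G(s)\Big(\int_0^{2\pi}\ln\big|x-s e^{i\theta}\big|\,d\theta\Big)s\,ds=-\int_0^{\infty}G(s)\,\ln\max(\rho,s)\,s\,ds.
\]

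For part (a), when $x\in B_R^c(O)$ we have $\rho>R\ge s$ for every $s$ in the support of $G$, so $\max(\rho,s)=\rho$ and the integral factors as $-\ln\rho\int_0^{R}G(s)\,s\,ds$; since $\int_{\mathbb{R}^2}g=2\pi\int_0^R G(s)s\,ds$, this is exactly a constant multiple of $\ln|x-O|$ determined by the total mass $\int g$, with no additive constant and with the sign dictated by the convention for $N$. For part (b), when $g\equiv0$ on $B_r(O)$ every active shell has $s\ge r$, so for $x\in B_r(O)$ we have $\rho<r\le s$ and $\max(\rho,s)=s$; the $x$-dependence drops out entirely and $\eta(x)=-\int_r^{R}G(s)\ln s\,s\,ds$ is constant on $B_r(O)$.

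I would also record the potential-theoretic cross-check, which is the most robust route for rigor: $\eta$ solves $-\Delta\eta=g$ distributionally, so $\eta$ is harmonic wherever $g=0$; being radial and harmonic on the exterior region $\{\rho>R\}$ forces $\eta=a+b\ln\rho$ there (matching the far field pins $b$ through the total mass), while being radial and harmonic on $B_r(O)$, together with the continuity at the center $O$ of the logarithmic potential of a bounded compactly supported density, excludes the singular mode $\ln\rho$ and leaves $\eta\equiv\text{const}$. There is no substantive obstacle here — the statement is Newton's shell theorem in disguise — and the only points demanding care are the proof of the mean-value identity, the Fubini justification in the shell decomposition, and, in the PDE route, the regularity of $\eta$ at $O$ needed to kill the logarithmic harmonic mode in part (b).
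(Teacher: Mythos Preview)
Your argument is correct. The paper does not supply its own proof of this lemma; it simply cites the result from \cite{Serr}, so there is nothing to compare against beyond noting that your shell-decomposition via the circular mean identity $\frac{1}{2\pi}\int_0^{2\pi}\ln|\rho e^{i\phi}-se^{i\theta}|\,d\theta=\ln\max(\rho,s)$ is the standard route, and your PDE cross-check (radial harmonic $\Rightarrow a+b\ln\rho$, then regularity at the center kills the logarithmic mode) is an equally clean alternative.

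One minor observation worth recording: the statement as printed in the paper carries a sign slip in part~(a). With the paper's convention $N(x)=-\frac{1}{2\pi}\ln|x|$, your computation correctly yields $\eta(x)=-\frac{\int g}{2\pi}\ln|x-O|$ on $B_R^c(O)$, and this is indeed the formula the paper actually uses when it applies the lemma in the proof of Theorem~\ref{Th1-5} (where it writes $1_{D_1}*N(x)=-\frac{|D_1|}{2\pi}\ln|x-O_1|$). Your phrase ``with the sign dictated by the convention for $N$'' already covers this, so nothing in your argument needs adjusting.
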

        The weak conception of uniformly rotation solution in Definition \ref{1-2} is not conducive to proving the symmetry of $D$, thus we need a equivalent description that is as following:
        \begin{lemma}\label{2-5}
            Assume that $\left(D,x_1,...,x_k\right)$ is a uniformly rotating patch solution with angular velocity $\Omega$ of the vortex-wave system \eqref{1-4}, then
            \begin{equation}
                1_D*N+\sum_{i=1}^{k}N(x-x_i)+\frac{\Omega}{2}|x|^2=constant~~~\text{in each connected component of} ~\partial D.
            \end{equation}
        \end{lemma}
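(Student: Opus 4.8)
The plan is to run exactly the formal computation that motivated the weak formulation \eqref{1-6}, but now with test functions concentrated near $\partial D$, together with elliptic regularity for the stream function. Write $\psi := 1_D * N + \sum_{i=1}^k N(x-x_i) + \tfrac{\Omega}{2}|x|^2$. Since $1_D \in L^\infty_c$, Calderón–Zygmund theory gives $1_D * N \in W^{2,q}_{loc}$ for all $q<\infty$, hence $C^{1,\beta}_{loc}$; away from the points $x_1,\dots,x_k$ the terms $\sum N(x-x_i)$ are harmonic and smooth, and $\tfrac{\Omega}{2}|x|^2$ is smooth. Because by hypothesis $x_i \in \mathbb{R}^2 \setminus \overline{D}$, there is a neighborhood $U$ of $\partial D$ on which $\psi \in C^{1,\beta}(U)$, and in fact $\psi \in C^{1,\alpha}$ up to $\partial D$ from each side by the $C^{1,\alpha}$ regularity of the boundary (Schauder estimates for $\Delta \psi = -2 \cdot 1_D$ near $\partial D$). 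So $\nabla^\perp \psi$ is a genuine continuous vector field near $\partial D$.

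The first equation of \eqref{1-6} says $\int_{\mathbb{R}^2} 1_D\, \nabla^\perp \psi \cdot \nabla \phi \, dx = 0$ for all $\phi \in C_0^\infty(\mathbb{R}^2)$, i.e. $\int_D \nabla^\perp\psi \cdot \nabla\phi\, dx = 0$. Since $\nabla^\perp\psi$ is $C^{0,\alpha}$ on a neighborhood of $\overline D$ and $\operatorname{div}(\nabla^\perp\psi) = 0$ there, an integration by parts converts this into the boundary statement $\int_{\partial D} \phi\, \nabla^\perp\psi \cdot \mathbf{n}\, dS = 0$ for all test $\phi$, where $\mathbf{n}$ is the outer normal; because $\phi$ is arbitrary this forces $\nabla^\perp\psi \cdot \mathbf{n} = 0$ on $\partial D$. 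But $\nabla^\perp\psi \cdot \mathbf{n} = -\nabla\psi \cdot \mathbf{n}^\perp = -\partial_\tau \psi$, the tangential derivative along $\partial D$. Hence $\partial_\tau \psi = 0$ along $\partial D$; since each connected component of $\partial D$ is a connected $C^{1,\alpha}$ curve and $\psi$ is continuous on it, $\psi$ is constant on each connected component of $\partial D$, which is the claim.

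An equivalent and perhaps cleaner route, which I would present if the integration-by-parts bookkeeping with the outer normal of a multiply connected $D$ gets delicate, is: restrict attention to a single connected component $\Gamma$ of $\partial D$, choose a tubular neighborhood $\Gamma \times (-\delta,\delta)$ in which $\psi$ is $C^1$, and take test functions $\phi = \chi(\text{signed distance})\, \eta(\text{arclength parameter})$; passing to the limit $\chi \to 1_{(-\delta,0)}$ isolates $\int_\Gamma \eta\, \partial_\tau \psi\, dS = 0$ for all $\eta$, giving $\partial_\tau\psi \equiv 0$ on $\Gamma$. The main obstacle is therefore not any deep idea but the regularity bookkeeping near the boundary: one must be sure that $\psi$ is $C^1$ up to $\partial D$ so that "$\nabla^\perp\psi\cdot\mathbf n=0$ on $\partial D$" and "$\psi$ constant on $\partial D$" are literally equivalent, and one must keep careful track of orientations/normals when $D$ has several boundary components (holes). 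Both are handled by Schauder estimates plus the hypothesis that the point vortices lie off $\overline D$, so the argument is routine once that is in place.
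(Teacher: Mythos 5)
Your proposal is correct and follows essentially the same route as the paper: starting from the weak formulation $\int_D \nabla^\perp\psi\cdot\nabla\phi\,dx=0$, integrating by parts using $\operatorname{div}(\nabla^\perp\psi)=0$ to obtain $\int_{\partial D}\phi\,\nabla^\perp\psi\cdot\mathbf{n}\,dS=0$ for all test functions, and concluding that the tangential derivative of $\psi$ vanishes on each connected component of $\partial D$. The only difference is that you spell out the regularity of $\psi$ up to the boundary (Calder\'on--Zygmund and Schauder estimates, plus $x_i\notin\overline{D}$), which the paper leaves implicit.
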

        \begin{proof}
        For any $\phi\in C_0^\infty(\mathbb{R}^2)$, we have
        \begin{equation*}
            \begin{split}
                0=&\int_{D}\nabla^\perp\left(1_D*N+\sum_{i=1}^{k}N(x-x_i)+\frac{\Omega}{2}|x|^2\right)\cdot\nabla\phi\,dx\\
                =&\int_{\partial D}\phi\nabla^\perp\left(1_D*N+\sum_{i=1}^{k}N(x-x_i)+\frac{\Omega}{2}|x|^2\right)\cdot\textbf{n}\,dS.
            \end{split}
        \end{equation*}
        Since $\phi$ is arbitrary, we have $\nabla^\perp\left(1_D*N+\sum_{i=1}^{k}N(x-x_i)+\frac{\Omega}{2}|x|^2\right)\cdot\textbf{n}=0$ on $\partial D$, which implies the conclusion.
        \end{proof}

   \subsection{	\textbf{Proof of Theorem \ref{Th1-4}}}
        Assume that $D=\cup_{i=1}^nD_i$ and each $D_i$ is a bounded domain which can be non-simply connected, and then let $h_{i1},...,h_{iM_i}\subset\mathbb{R}^2$ denote the $M_i$ holes of $D_i$ (each $h_{ij}$ is a bounded simply connected domain). Note that $\partial D_i$ has $M_i+1$ connected components: they include the outer boundary of $D_i$, which we denote by $\partial D^0_i$, and the inner boundaries $\partial h_{ij}$ for $j =1,...,M_i$. We orient $\partial h_{ij}$ the opposite way to $\partial D^0_i$. 
        Recalling that we will use $l$ (defined in \eqref{1-7}) to prove the symmetry of $D$, in the following, we will use two different ways to compute $l$. Firstly, we give the precise definition of $v$ in the integral of \eqref{1-7}.

        Let $p_i:D_i\to\mathbb{R}$ be defined as in \eqref{lem 2-1}, that is, $p_i$ satisfies
        \begin{align*}
            \lset{
                \begin{array}{ll}
                    \Delta p_i=-2&\text{in}\ D_i\ ,\vspace{0.25em}\\
                    p_i=c_{ij}&\text{on}\ \partial h_{ij}\ ,\quad \text{for}\ j=1,...,M_i,\vspace{0.25em}\\
                    p_i=0&\text{on}\ \partial D^0_i\ ,
                \end{array}
            }
        \end{align*}
        where $c_{ij}$ is chosen so that
        \begin{equation}\label{3-1}
            \int_{\partial h_{ij}}\nabla p_i\cdot\textbf{n}~dS=-2|h_{ij}|.
        \end{equation}

        We then define $\phi :D\to\mathbb{R}$, such that in each $D_i$ we have $\phi=\phi_i:=|x|^2/2+p_i$, and $v=\nabla\phi$. Define
        \begin{equation*}
            f_\Omega(x)=1_{D}*N(x)+\sum_{i=1}^{k}N(x-x_i)+\frac{\Omega}{2}|x|^2.
        \end{equation*}

        \textbf{Step.1}(the first way to compute $l$)
        \begin{align*}
            l=&\sum_{i=1}^{n}\int_{D_i}\nabla\phi_i\cdot\nabla f_\Omega \,dx\\
            =&\sum_{i=1}^{n}\int_{D_i}\nabla\cdot\left(f_\Omega\nabla\phi_i\right)\,dx\\
            =&\sum_{i=1}^{n}\int_{\partial D_i}f_\Omega\nabla\phi_i\cdot\textbf{n}\,dS\\
            =&\sum_{i=1}^{n}\left(f_\Omega\Big|_{\partial D_i^0}\int_{\partial D_i^0}\nabla\phi_i\cdot\textbf{n}\,dS-\sum_{j=1}^{M_i}f_\Omega\Big|_{\partial h_{ij}}\int_{\partial h_{ij}}\nabla\phi_i\cdot\textbf{n}\,dS\right)\\
            =&\sum_{i=1}^{n}\sum_{j=1}^{M_i}\left(f_\Omega\Big|_{\partial D^0_i}-f_\Omega\Big|_{\partial h_{ij}}\right)\int_{\partial h_{ij}}\nabla\phi_i\cdot\textbf{n}\,dS\\
            =&0,
        \end{align*}
        where we have used the fact that $\Delta\phi_i=0$, $f_\Omega$ is constant on each connected component of $\partial D$ (see Lemma \ref{2-5}) and $\int_{\partial{h_{ij}}}\nabla\phi_i\cdot\textbf{n}\,dS$=0 (see \eqref{3-1}).

        \textbf{Step.2}(the second way to compute $l$)
        \begin{align*}
            l=&\sum_{i,j=1}^{n}\int_{D_i}\left(x+\nabla p_i\right)\cdot\nabla\left(1_{D_j}*N\right)\,dx+\sum_{i=1}^{n}\sum_{m=1}^{k}\int_{D_i}\left(x+\nabla p_i\right)\cdot\nabla N(x-x_m)\,dx\\
            &+\Omega\sum_{i=1}^{n}\int_{D_i}\left(x+\nabla p_i\right)\cdot x\,dx\\
            =&:l_1+l_2+l_3.
        \end{align*}
        For $l_1$, we have
        \begin{equation}\label{3-2}
            \begin{split}
                l_1=&\sum_{i,j}\int_{D_i}x\cdot\nabla\left(1_{D_j}*N\right)\,dx+\sum_{i,j}\int_{D_i}\nabla p_i\cdot\nabla\left(1_{D_j}*N\right)\,dx\\
                =&:l_1^1+l_1^2.
            \end{split}
        \end{equation}
        For $l_1^1$, we have
        \begin{equation}\label{3-3}
            \begin{split}
                l_1^1=&-\frac{1}{2\pi}\sum_{i,j}\int_{D_i}\int_{D_j}\frac{x\cdot\left(x-y\right)}{|x-y|^2}\,dy\,dx\\
                =&-\frac{1}{4\pi}\left(\sum_{i,j}\int_{D_i}\int_{D_j}\frac{x\cdot\left(x-y\right)}{|x-y|^2}\,dy\,dx+\sum_{i,j}\int_{D_j}\int_{D_i}\frac{y\cdot\left(y-x\right)}{|x-y|^2}\,dx\,dy\right)\\
                =&-\frac{1}{4\pi}\sum_{i,j}|D_i||D_j|.
            \end{split}
        \end{equation}
        For $i\neq j$, we denote $j\prec i$ if $D_j$ is contained in a hole of $D_i$ (we also denote that hole by $h_{ic(j)}$). Then, for $l_1^2$, we have
        \begin{equation}\label{3-4}
            \begin{split}
                l_1^2=&\sum_{i,j}\left(-\int_{D_i}p_i\Delta\left(1_{D_j}*N\right)\,dx+\int_{\partial D_i}p_i\nabla\left(1_{D_j}*N\right)\cdot\textbf{n}\,dS\right)\\
                =&\sum_{i=1}^{n}\int_{D_i}p_i\,dx-\sum_{i,j}\sum_{m=1}^{M_i}c_{im}\int_{\partial h_{im}}\nabla\left(1_{D_j}*N\right)\cdot\textbf{n}\,dS\\
                =&\sum_{i=1}^{n}\int_{D_i}p_i\,dx+\sum_{i\neq j,j\prec i}c_{ic(j)}|D_j|\\
                \leq&\sum_{i=1}^{n}\int_{D_i}p_i\,dx+\frac{1}{4\pi}\left(\sum_{i,j}\left(1_{j\prec i}+1_{i\prec j}\right)|D_i||D_j|\right)\\
                =&\sum_{i=1}^{n}\int_{D_i}p_i\,dx+\frac{1}{4\pi}\sum_{j\prec i~or~i\prec j}|D_i||D_j|.
            \end{split}
        \end{equation}
        From \eqref{3-2}, \eqref{3-3} and \eqref{3-4}, we obtain
        \begin{equation}\label{3-5}
            l_1\leq\sum_{i=1}^{n}\left(\int_{D_i}p_i\,dx-\frac{|D_i|^2}{4\pi}\right)-\frac{1}{4\pi}\sum_{\substack{i\neq j\\j\nprec i~and~i\nprec j}}|D_i||D_j|.
        \end{equation}
        For $l_2$, we write
        \begin{equation}\label{3-6}
            \begin{split}
                l_2=&\sum_{i=1}^{n}\sum_{m=1}^{k}\int_{D_i}x\cdot\nabla N(x-x_m)\,dx+\sum_{i=1}^{n}\sum_{m=1}^{k}\int_{D_i}\nabla p_i\cdot\nabla N(x-x_m)\,dx\\
                =&:l_2^1+l_2^2.
            \end{split}
        \end{equation}
        For $l_2^1$, we have
        \begin{align}
                l_2^1=&-\frac{1}{2\pi}\sum_{i=1}^{n}\sum_{m=1}^{k}\int_{D_i}\frac{x\cdot\left(x-x_m\right)}{|x-x_m|^2}\,dx \notag \\
                =&-\frac{1}{2\pi}\sum_{i=1}^{n}\sum_{m=1}^{k}\int_{D_i}\frac{|x-x_m|^2+x_m\cdot\left(x-x_m\right)}{|x-x_m|^2}\,dx\notag\\
                =&-\frac{k|D|}{2\pi}-\frac{1}{2\pi}\sum_{m=1}^{k}\int_{D}\frac{x_m\cdot(x-x_m)}{|x-x_m|^2}\label{3-7}\\
                =&-\frac{k|D|}{2\pi}+\sum_{m=1}^{k}\left(\Omega|x_m|^2-\sum_{i=1,i\neq m}^{k}\frac{x_m\cdot\left(x_m-x_i\right)}{2\pi|x_m-x_i|^2}\right)\notag\\
                =&-\frac{k|D|}{2\pi}+\Omega\sum_{m=1}^{k}|x_m|^2-\frac{k(k-1)}{4\pi},\notag
        \end{align}
        where we have used the second equation of \eqref{1-6}. For $l_2^2$, we have
        \begin{equation}\label{3-8}
            \begin{split}
                l_2^2=&-\sum_{i=1}^{n}\sum_{m=1}^{k}\sum_{p=1}^{M_i}c_{ip}\int_{\partial h_{ip}}\frac{\partial N(x-x_m)}{\partial \textbf{n}}\,dS\\
                =&\sum_{i=1}^{n}\sum_{m=1}^{k}\sum_{p=1}^{M_i}c_{ip}1_{x_m\in h_{ip}}.
            \end{split}
        \end{equation}
        Combining \eqref{3-6}, \eqref{3-7} and \eqref{3-8}, we obtain
        \begin{equation}\label{3-9}
            l_2=-\frac{k|D|}{2\pi}+\Omega\sum_{m=1}^{k}|x_m|^2-\frac{k(k-1)}{4\pi}+\sum_{i=1}^{n}\sum_{m=1}^{k}\sum_{p=1}^{M_i}c_{ip}1_{x_m\in h_{ip}}.
        \end{equation}
        For $l_3$, by Lemma \ref{lem 2-3}, we have
        \begin{equation}\label{3-10}
            l_3=\Omega\sum_{i=1}^{n}\left(\int_{D_i}|x|^2\,dx-\int_{D_i}|\nabla p_i|^2\,dx\right).
        \end{equation}
        From \eqref{3-5}, \eqref{3-9} and \eqref{3-10}, we obtain
        \begin{equation}\label{3-11}
            \begin{split}
                l\leq&\sum_{i=1}^{n}\left(\int_{D_i}p_i\,dx-\frac{|D_i|^2}{4\pi}\right)+\left(-\frac{1}{4\pi}\sum_{\substack{i\neq j\\j\nprec i~and~i\nprec j}}|D_i||D_j|\right)\\
                &+\left(\sum_{i=1}^{n}\sum_{m=1}^{k}\sum_{p=1}^{M_i}c_{ip}1_{x_m\in h_{ip}}-\frac{k|D|}{2\pi}\right)\\
                &+\Omega\left(\sum_{i=1}^{n}\left(\int_{D_i}|x|^2\,dx-\int_{D_i}|\nabla p_i|^2\,dx\right)+\sum_{m=1}^{k}|x_m|^2\right)+\left(-\frac{k(k-1)}{4\pi}\right).
            \end{split}
        \end{equation}
        By Lemma \ref{lem 2-1}, Lemma \ref{lem 2-3} and the assumption $\Omega<0$, we can easily deduce that each term in the right hand side of \eqref{3-11} is non-positive. Combining this fact and \textbf{Step.1}, we conclude that each term in the right hand side of \eqref{3-11} vanishes. The fact that the fourth term is vanishing implies that each $D_i$ is either a disk or an annulus centered at origin, $k=1$ and $x_1=0$ by Lemma \ref{lem 2-3}. By the assumption that $x_1\in\mathbb{R}^2\setminus\overline{D}$, each $D_i$ must be an annulus, the proof is thus completed.

        \qed

    \subsection{\textbf{Proof of Theorem \ref{Th1-5}}}
        Note that the computations in the proof of Theorem \ref{Th1-4} still hold for the case of $\Omega=0$. In this case, in fact, we can conclude that(see \eqref{3-11})
        \begin{itemize}
            \item[(1)] Each $D_i$ is either a disk or an annulus(see the first term in \eqref{3-11} and Lemma \ref{lem 2-1}).
            \item[(2)] $D_i$ and $D_j$ are nested for $i\neq j$(see the second term in \eqref{3-11}).
            \item[(3)] $k=1$(see the fifth term in \eqref{3-11}).
            \item[(4)] $x_1$ must belong to one hole of all $D_i$ for $i=1,...,n$(see the third term in \eqref{3-11} and the fact that $c_{ip}\leq |D_i|/2\pi$ by \eqref{2-2}), hence each $D_i$ must be an annulus and $x_1$ belongs to the hole of the smallest annulus by (1) and (2).
        \end{itemize}

        To finish the proof, we only need to check that $x_1$ is the center of the all annulus $D_i$. In the following, we reorder the $D_i$ such that $D_i$ is contained in the hole of $D_{i+1}$. We also denote the inner boundary and outer boundary of $D_i$ by $\Gamma_i^{in}$ and $\Gamma_i^{out}$ for $i=1,...,n$ and denote the center of $D_i$ by $O_i$. Next, we show $O_i=x_1$ for $i=1,...,n$.

        Firstly, we show $O_1=x_1$. For $x\in \Gamma_1^{out}$, we split $f$ into
        \begin{equation*}
            f=1_{D_1}*N+\sum_{i=2}^{n}1_{D_i}*N+N(x-x_1).
        \end{equation*}
        Lemma \ref{lem 2-4}(a) yields that $1_{D_1}*N(x)=-|D_1|/2\pi\ln|x-O_1|=constant$ on $\Gamma_1^{out}$. In the second term, for each $i\geq 2$, since each $D_i$ is an annulus with $\Gamma_1^{out}$ in its hole, Lemma \ref{lem 2-4}(b) gives that the second term is constant along $\Gamma_1^{out}$. So we deduce that $N(x-x_1)$ is constant along $\Gamma_1^{out}$, which implies that $O_1=x_1$.

        Secondly, we assume for $i\leq k$, $D_i$ are known to be concentric about $x_1$. To show that $D_{k+1}$ is also centered at $x_1$, for $x\in\Gamma_{k+1}^{out}$, we decompose $f$ into
        \begin{equation*}
            \begin{split}
                f=&\sum_{i=1}^{k}1_{D_j}*N+1_{D_{k+1}}*N+\sum_{i\geq k+1}^{n}1_{D_i}*N+N(x-x_1)\\
                =&-\frac{\sum_{i=1}^{k}|D_i|}{2\pi}\ln|x-x_1|-\frac{1}{2\pi}\ln|x-x_1|+constant\\
                =&-\frac{1+\sum_{i=1}^{k}|D_i|}{2\pi}\ln|x-x_1|+constant,
            \end{split}
        \end{equation*}
        which implies that $\ln|x-x_1|$ is constant along $\Gamma_{k+1}^{out}$, so we get $O_{k+1}=x_1$. The proof is thus completed.

        \qed

\section{Non-radial symmetric uniformly rotating patch solutions for \texorpdfstring{$\Om>0$}{}}\label{sec-4}
    This section is devoted to the proof of Theorem \ref{main theorem 2}, which shows the existence of a family of non-radial symmetric uniformly rotating patch solutions. During the rest of this paper, we will fix two real numbers $0<a_1<a_2$ and only focus on the vortex wave systems \eqref{1-4} in the case of $k=1$.
    Therefore we can rewrite \eqref{1-4} as 
    \begin{equation}\label{VWE k-1}
        \begin{cases}
            \partial_t \omega+\mathbf{u}\cdot\nabla \omega=0,\\
            \frac{d x_1(t)}{dt}=\nabla^\perp N*\omega(x_1(t),t),\\
            \mathbf{u}(x,t)=\nabla^\perp \left\{N*\omega(x,t)+ N(x-x_1(t))\right\},
        \end{cases}   
    \end{equation}
    where $N(x)=-\frac{1}{2\pi}\ln|x|$ is the fundamental solution of the Laplacian $ -\Delta $ in $ \mathbb{R}^2 $ and $(a,b)^\perp=(b,-a)$ for any vector $(a,b)\in\R^2$. We will split this section into several subsections for better readability.

    \subsection{Notations and preliminaries}
        Before getting into the details of the contour dynamics equation, we setup some notations to be constantly used thereafter, along with several theorems essential for our analysis.
    
        In the sequel, we will also agree the identification $\mathbb{C} \approx \R^2$ and denote the scalar product of two complex numbers $c_1=p_1+iq_1$ and $c_2=p_2+iq_2$ by 
        \begin{equation*}
            c_1\cdot c_2={\rm Re}(c_1\overline{c_2})=p_1p_2+q_1q_2.
        \end{equation*}
        Notice that $ic_1=-(p_1,q_1)^\perp$ and denote $\T=\R /2\pi\Z$.

        Next we recall an important theorem of bifurcation theory, established by Crandall and Rabinowitz in \cite{CR-Bifurcation}, which will play a central role in the proof of Theorem \ref{main theorem 2}.
        \begin{theorem}[Crandall-Rabinowitz]\label{CR-Fir}
            Let $X$ and $Y$ be two Banach spaces. Let $V\subset X$ be a neighborhood of $0$ and $F$ be a functional such that 
            \begin{equation*}
                F:\R\times V\to Y.
            \end{equation*}
            Assume that $F$ enjoys the following properties:
            \begin{itemize}
                \item[(1)] Existence of trivial branch:
                        \begin{equation*}
                            F(\Om,0)=0,\quad for\ all\ \Om\in\R.
                        \end{equation*}
                \item[(2)] Regularity: $F$ is regular in the sense that $\partial_\Om F, d_x F$ and $\partial_\Om d_xF$ exist and are continuous. 
                \item[(3)] Fredholm property: The kernel $\ker (d_xF(0,0))$ is of dimension one, i.e., there is $x_0\in V$ such that 
                        \begin{equation*}
                            \ker (d_x F(0,0))=\Span\{x_0\},
                        \end{equation*}  
                        and the $\Ran (d_x F(0,0))$ is closed and of a co-dimension one.
                \item[(4)] Transversality:
                        \begin{equation*}
                            \partial_\Om d_xF(0,0)[x_0]\notin \Ran(d_xF(0,0)).
                        \end{equation*}
            \end{itemize}
            Then, denoting $\chi$ any complement of $\ker(d_xF(0,0))$ in $X$, there exists a neighborhood $U$ of $(0,0)$, an interval $(-a,a)$, for some $a>0$ and continuous functions
            \begin{equation*}
                \psi:(-a,a)\to\R\quad {\rm and}\quad\phi:(-a,a)\to\chi,
            \end{equation*}
            such that 
            \begin{equation*}
                \psi(0)=0,\quad \phi(0)=0
            \end{equation*}
            and
            \begin{equation*}
                \left\{(\Om,x)\in U:F(\Om,x)=0\right\}=\left\{(\psi(s),sx_0+s\phi(s)):|s|<a\right\}\cup\left\{(\Om,0)\in U\right\}.
            \end{equation*}
        \end{theorem}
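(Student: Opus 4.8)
The plan is to establish the theorem by the classical Lyapunov--Schmidt reduction, turning the infinite-dimensional equation $F(\Om,x)=0$ into a single scalar equation that can be resolved by two successive applications of the implicit function theorem (IFT). First I would fix the splittings dictated by the Fredholm hypothesis (3): write $X=\ker(d_xF(0,0))\oplus\chi=\Span\{x_0\}\oplus\chi$ with $\chi$ a closed complement, and $Y=\Ran(d_xF(0,0))\oplus Z$ with $\dim Z=1$, and let $P:Y\to\Ran(d_xF(0,0))$ denote the projection along $Z$. The point of (3) is that the restriction $d_xF(0,0)|_\chi:\chi\to\Ran(d_xF(0,0))$ is a bounded bijection, hence an isomorphism by the open mapping theorem.

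Writing the unknown as $x=tx_0+w$ with $t\in\R$ and $w\in\chi$, the equation $F(\Om,x)=0$ splits into the pair $PF(\Om,tx_0+w)=0$ and $(I-P)F(\Om,tx_0+w)=0$. For the first (range) equation I would apply the IFT in the variable $w$: the map $\Psi(\Om,t,w):=PF(\Om,tx_0+w)$ satisfies $\Psi(\Om,0,0)=0$ by (1), and its $w$-differential at the origin is $d_xF(0,0)|_\chi$, the isomorphism just identified; by (2) the map $\Psi$ is $C^1$ in $(t,w)$ with derivatives continuous in $\Om$, so the IFT produces a solution $w=w(\Om,t)$, continuous together with $\partial_tw$ and $\partial_\Om w$, with $w(\Om,0)=0$ for all $\Om$ (by the uniqueness clause of the IFT together with (1)).

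Substituting back, everything reduces to the scalar equation $\Phi(\Om,t):=(I-P)F(\Om,tx_0+w(\Om,t))=0$, valued in the one-dimensional space $Z\cong\R$. By (1) we have $\Phi(\Om,0)=0$, so $t=0$ recovers the trivial branch; to isolate the bifurcating branch I would divide out the trivial zero, setting $g(\Om,t):=\Phi(\Om,t)/t=\int_0^1\partial_t\Phi(\Om,\tau t)\,d\tau$ for $t\neq0$ and $g(\Om,0):=\partial_t\Phi(\Om,0)$. Differentiating the range equation in $t$ at the origin and using $d_xF(0,0)[x_0]=0$ together with injectivity of $d_xF(0,0)|_\chi$ forces $\partial_tw(0,0)=0$; consequently $g(0,0)=(I-P)d_xF(0,0)[x_0]=0$, while the same computation reduces the $\Om$-derivative to $\partial_\Om g(0,0)=(I-P)\,\partial_\Om d_xF(0,0)[x_0]$, which is nonzero in $Z$ precisely by the transversality hypothesis (4). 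A final application of the IFT to $g(\Om,t)=0$ then yields a continuous $\Om=\psi(t)$ with $\psi(0)=0$; setting $s=t$, $\phi(s):=w(\psi(s),s)/s$ (continuous with $\phi(0)=0$ since $w(\Om,0)\equiv0$ forces the quotient to extend to $\partial_tw(0,0)=0$) and reading off $x=sx_0+s\phi(s)$ gives the bifurcating curve. The uniqueness in both IFT steps shows the local zero set of $F$ is exactly this curve together with the trivial branch $\{(\Om,0)\}$, as claimed.

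The main obstacle is the weakness of the regularity hypothesis (2): only $\partial_\Om F$, $d_xF$ and the mixed derivative $\partial_\Om d_xF$ are assumed continuous, with no control on $d_x^2F$. This forbids the naive route of applying the IFT directly to the rescaled map $F(\Om,s(x_0+\phi))/s$, whose $\phi$-differential would involve $d_x^2F$ at points with $s\neq0$, and it means one must verify by hand that the reduced scalar map $g$ is continuous with a continuous $\Om$-derivative near $(0,0)$. The delicate bookkeeping is to show that the implicit solution $w(\Om,t)$ inherits exactly the mixed regularity $\partial_\Om\partial_tw$ needed in order to differentiate $\partial_t\Phi$ in $\Om$; this is where the hypothesis that $\partial_\Om d_xF$ is continuous is used, through a refined IFT that propagates precisely these partial derivatives rather than assuming full $C^1$ (or $C^2$) smoothness of $F$.
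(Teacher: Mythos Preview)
The paper does not prove this theorem: it is quoted verbatim as a classical result of Crandall and Rabinowitz, with a citation to \cite{CR-Bifurcation}, and is used as a black box in the construction of the bifurcating branches in Section~4. There is therefore no ``paper's own proof'' to compare against.

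That said, your outline is the standard Lyapunov--Schmidt argument from the original Crandall--Rabinowitz paper and is essentially correct. The two-step IFT scheme (first solve the projected range equation for $w(\Om,t)$, then divide out the trivial factor $t$ and solve the reduced scalar equation for $\Om=\psi(t)$) is exactly how the result is obtained, and you have correctly identified where each hypothesis enters: (3) gives the isomorphism needed for the first IFT, (1) gives the trivial branch and allows the division by $t$, and (4) supplies the nondegeneracy $\partial_\Om g(0,0)\neq0$ for the second IFT. Your closing remark about the limited regularity assumption (2) is also well taken; the absence of $d_x^2F$ is what forces the integral representation $g(\Om,t)=\int_0^1\partial_t\Phi(\Om,\tau t)\,d\tau$ and the careful tracking of $\partial_\Om\partial_t w$ rather than a direct appeal to $C^2$ smoothness.
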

        
        We conclude this subsection with a simple but powerful identity (see \cite{HHE-identity}).
        \begin{lemma}\label{identity-ln}
            For $x\in(0,\infty)$, there holds
            \begin{equation}
                \frac{1}{2\pi}\int_0^{2\pi}\ln|1-xe^{i\eta}|\cos (n\eta)\,d\eta=-\frac{\min\{x^n,x^{-n}\}}{2n},\quad for\ all\ n\in \N^*.
            \end{equation}
        \end{lemma}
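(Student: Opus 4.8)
The final statement to prove is Lemma~\ref{identity-ln}, the Fourier–log identity
\[
\frac{1}{2\pi}\int_0^{2\pi}\ln|1-xe^{i\eta}|\cos (n\eta)\,d\eta=-\frac{\min\{x^n,x^{-n}\}}{2n},\qquad n\in\N^*.
\]

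\medskip

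\textbf{Proof strategy.}
The plan is to reduce everything to the classical Taylor expansion of $\ln(1-z)$ together with orthogonality of the trigonometric system on $\T$. First I would treat the case $0<x<1$. Writing $1-xe^{i\eta}$ and using $\ln|1-xe^{i\eta}|=\operatorname{Re}\log(1-xe^{i\eta})$, where $\log$ denotes the principal branch (well-defined since $|xe^{i\eta}|<1$), I expand
\[
\log(1-xe^{i\eta})=-\sum_{m=1}^{\infty}\frac{x^m e^{im\eta}}{m},
\]
a series that converges uniformly in $\eta$ for each fixed $x\in(0,1)$ because it is dominated by $\sum x^m/m<\infty$. Taking real parts gives $\ln|1-xe^{i\eta}|=-\sum_{m\ge1}\frac{x^m}{m}\cos(m\eta)$, and the uniform convergence lets me integrate term by term against $\cos(n\eta)$. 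By orthogonality, $\frac{1}{2\pi}\int_0^{2\pi}\cos(m\eta)\cos(n\eta)\,d\eta=\frac12\delta_{mn}$ for $m,n\ge1$, so only the $m=n$ term survives and the integral equals $-\frac{x^n}{2n}$. Since $x<1$ we have $\min\{x^n,x^{-n}\}=x^n$, which is exactly the claimed value.

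\medskip

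\textbf{Remaining cases.}
For $x>1$ I would exploit the symmetry $|1-xe^{i\eta}|=x\,|1-x^{-1}e^{-i\eta}|$. Indeed $1-xe^{i\eta}=-xe^{i\eta}(1-x^{-1}e^{-i\eta})$, hence
\[
\ln|1-xe^{i\eta}|=\ln x+\ln|1-x^{-1}e^{-i\eta}|.
\]
The constant $\ln x$ integrates to zero against $\cos(n\eta)$ for $n\ge1$, and for the remaining term I substitute $\eta\mapsto-\eta$ (which leaves $\cos(n\eta)$ invariant and the interval of integration unchanged) to reduce to the already-established case with $x^{-1}\in(0,1)$ in place of $x$; this yields $-\frac{(x^{-1})^n}{2n}=-\frac{x^{-n}}{2n}=-\frac{\min\{x^n,x^{-n}\}}{2n}$. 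Finally the boundary case $x=1$ follows either by continuity of both sides in $x$ (the left side is continuous in $x$ on $(0,\infty)$ by dominated convergence, using an integrable logarithmic singularity), or directly from the well-known expansion $\ln|1-e^{i\eta}|=\ln\bigl(2|\sin(\eta/2)|\bigr)=-\sum_{m\ge1}\frac{\cos(m\eta)}{m}$, giving again $-\frac{1}{2n}$.

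\medskip

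\textbf{Main obstacle.}
The only genuinely delicate point is the justification of term-by-term integration; once uniform convergence of the log-series for $|x|<1$ is noted this is routine, and for $x=1$ one needs the slightly more careful observation that $\sum_{m\ge1}\frac{\cos(m\eta)}{m}$ converges to $-\ln|1-e^{i\eta}|$ in, say, $L^1(\T)$ (or appeal to Abel summation / dominated convergence as $x\uparrow1$) so that pairing with the bounded function $\cos(n\eta)$ is legitimate. No deep input is required beyond these standard facts about Fourier series of the logarithm.
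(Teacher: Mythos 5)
Your proof is correct. Note that the paper itself gives no proof of this lemma at all --- it simply cites Hassainia--Hmidi--Roulley \cite{HHE-identity} --- so there is no in-paper argument to compare against; your self-contained derivation via the expansion $\log(1-z)=-\sum_{m\ge1}z^m/m$, orthogonality of $\{\cos(m\eta)\}$, the factorization $1-xe^{i\eta}=-xe^{i\eta}(1-x^{-1}e^{-i\eta})$ for $x>1$, and a continuity (or classical Fourier series) argument at $x=1$ is exactly the standard route and handles all of $x\in(0,\infty)$. The one point worth making fully explicit in the $x=1$ limiting argument is the integrable dominating function: from $|1-xe^{i\eta}|^2=(1-x)^2+2x(1-\cos\eta)$ one gets $\tfrac12\ln(1-\cos\eta)\le\ln|1-xe^{i\eta}|\le\ln 2$ for $x\in[\tfrac12,1]$, which justifies dominated convergence as $x\uparrow1$; with that noted, the argument is complete.
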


    \subsection{Contour dynamics} 
        Our first task is to find some continuous functions satisfied the first assumption in Theorem \ref{CR-Fir}.

        We look for a uniformly rigid rotating solution $(\omega(x,t),x_1(t))$ of \eqref{VWE k-1} in the form 
        \begin{equation}
            \lset{
                \begin{array}{ll}
                    \omega(\cdot,t)=1_{D_{2,t}\setminus \overline{D_{1,t}}}\\
                    x_1(t)=e^{i\Om t}x_1
                \end{array}
            }
        \end{equation}
        for $t>0$, where $D_{l,t}=e^{i\Om t}D_l$ and the boundary of domain $D_{l}$ is close to a circle $\{x\in\R^2: |x|<a_l\}$ in a sense to be precised in a moment, for $l=1,2$. To this end, we consider the parametrizations of boundaries $z_l(\theta,t):\T\to\partial D_{l,t}$ with the following ansatz, for $l=1,2$
        \begin{equation}\label{parametrization of z1t z2t}
            z_l(\theta,t)=e^{i\Om t}z_l(\theta),
        \end{equation}
        where $(z_1(\theta),z_2(\theta))_{\theta\in\T}$ is a parametrization of the initial patch $1_{D_2\setminus D_1}$ with
        \begin{equation}\label{parametrization of z1 z2}
            \begin{split}
                z_l:\T&\to\partial D_l\\
                \theta&\to R_l(\theta)e^{i\theta}:=\sqrt{a_l^2+2r_l(\theta)}e^{i\theta}.    
            \end{split}
        \end{equation}
    
        In the following, we always assume that $x_1=(x_1^1,0)\in\R^2$.
        Subsequently, we shall establish an equivalent reformulation of \eqref{VWE k-1} in the case of uniformly rotating solutions.
        
        \begin{lemma}[Contour systems]\label{Countor equation lemma}
            Let $\Om\in\R$ and $r_1,r_2$ be two even functions of $\theta$, i.e.
            \begin{equation}
                r_1(-\theta)=r_1(\theta),\quad r_2(-\theta)=r_2(\theta),\quad {\rm for\ all}\ \theta\in \T.
            \end{equation}
            The radial deformation defined through \eqref{parametrization of z1 z2} gives rise to a uniformly rotating, at angular velocity $\Om$, solution to the \eqref{VWE k-1} if and only if it satisfies the nonlinear contour systems
            \begin{equation*}
                (F_1,F_2,G)(\Om,r_1,r_2,x_1^1)=0,
            \end{equation*}
            where, for any $\theta\in\T$, we denote
            \begin{equation}\label{Def of (F_1,F_2,G)}
                \begin{split}
                        &F_1(\Om,r_1,r_2,x_1^1)= \Om r_1'(\theta)-\frac{1}{2\pi}\partial_\theta\left\{\int_0^{2\pi}\int_{R_1(\eta)}^{R_2(\eta)}\ln|R_1(\theta)e^{i\theta}-\rho e^{i\eta}|\rho \,d\rho\,d\eta\right.\\
                        &\left.\quad\quad\quad\quad\quad\quad\quad\quad\quad\quad\quad\quad\quad\qquad+ \ln|R_1(\theta)e^{i\theta}-(x_1^1,0) |\right\},\\
                        &F_2(\Om,r_1,r_2,x_1^1)=\Om r_2'(\theta)-\frac{1}{2\pi}\partial_\theta\left\{\int_0^{2\pi}\int_{R_1(\eta)}^{R_2(\eta)}\ln|R_2(\theta)e^{i\theta}-\rho e^{i\eta}|\rho \,d\rho\,d\eta\right.\\
                        &\left.\quad\quad\quad\quad\quad\quad\quad\quad\quad\quad\quad\quad\quad\qquad+\ln|R_2(\theta)e^{i\theta}-(x_1^1,0) |\right\},\\
                        &G(\Om,r_1,r_2,x_1^1)\ =\Om x_1^1-\frac{1}{2\pi}\int_0^{2\pi}\int_{R_1(\eta)}^{R_2(\eta)}\frac{x_1^1-\rho \cos\eta}{(x_1^1-\rho \cos\eta)^2+(\rho\sin\eta)^2}\rho \,d\rho\,d\eta.
                \end{split}
            \end{equation}
            Furthermore, the trivial deformation, corresponding to $(r_1,r_2,x_1^1)=(0,0,0)$ is a solution to the preceding contour systems for all $\Om\in\R$.
        \end{lemma}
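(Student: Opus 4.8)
\textbf{Proof strategy for Lemma \ref{Countor equation lemma}.}
The plan is to translate the system \eqref{VWE k-1} for uniformly rotating solutions into the boundary equations \eqref{Def of (F_1,F_2,G)} by a direct computation, exploiting the patch structure. First I would recall that by Lemma \ref{2-5} (adapted to $k=1$), a uniformly rotating patch with angular velocity $\Om$ is equivalent to requiring that the stream function
\[
\Psi(x)=1_{D_2\setminus\overline{D_1}}*N(x)+N(x-x_1)+\frac{\Om}{2}|x|^2
\]
be constant along each connected component of the boundary, namely $\partial D_1$ and $\partial D_2$, together with the point-vortex ODE in its stationary-rotating form $\Om x_1^\perp = \nabla^\perp N*\omega(x_1)$ rewritten as a scalar equation (using $x_1=(x_1^1,0)$, the $\perp$ gives $\Om x_1^1$ on the first component). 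The constancy of $\Psi$ on $\partial D_l$ is equivalent to $\partial_\theta\bigl(\Psi(z_l(\theta))\bigr)=0$; plugging in the parametrization $z_l(\theta)=R_l(\theta)e^{i\theta}$ with $R_l(\theta)=\sqrt{a_l^2+2r_l(\theta)}$ and writing $N*\omega$ as the double integral $-\frac{1}{2\pi}\int_0^{2\pi}\!\int_{R_1(\eta)}^{R_2(\eta)}\ln|z-\rho e^{i\eta}|\,\rho\,d\rho\,d\eta$ in polar coordinates produces exactly $F_1,F_2$ after noting that $\partial_\theta\bigl(\tfrac{\Om}{2}|z_l(\theta)|^2\bigr)=\tfrac{\Om}{2}\partial_\theta R_l^2(\theta)=\Om r_l'(\theta)$.

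Next I would handle $G$: the point-vortex equation $\Om x_1^\perp=\nabla^\perp N*\omega(x_1)$ is, componentwise, $\Om(x_1^1,0)^\perp=(0,-\Om x_1^1)$ on one side, and on the other side $\nabla^\perp N*\omega(x_1)=-(\partial_1,\partial_2)^\perp$ applied appropriately; since $x_1=(x_1^1,0)$ lies on the $x$-axis and $\omega$ is symmetric under reflection across the $x$-axis (because $r_1,r_2$ are even), the vertical component of $\nabla N*\omega(x_1)$ vanishes, so only one scalar equation survives. Writing $-\partial_1 N*\omega(x_1)=\frac{1}{2\pi}\int_{D_2\setminus D_1}\frac{x_1^1-y_1}{|x_1-y|^2}\,dy$ and converting to polar coordinates $y=\rho e^{i\eta}$ yields precisely the integral term in $G$. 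I would also remark that the evenness hypothesis on $r_1,r_2$ guarantees $x_1$ stays on the symmetry axis, which is what makes the scalar reduction of the vector ODE consistent. The trivial solution claim is immediate: when $(r_1,r_2,x_1^1)=(0,0,0)$ we have $R_l\equiv a_l$, so the $\theta$-derivative terms in $F_1,F_2$ vanish by rotational invariance of the integrals over concentric annuli (or directly via Lemma \ref{identity-ln}), and $G$ reduces to $0-0=0$ since the integrand is odd in $\eta$ when $x_1^1=0$.

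The one genuinely delicate point is the \emph{equivalence} direction — showing that the boundary/ODE conditions not only are necessary but also \emph{suffice} to produce a bona fide weak solution in the sense of Definition \ref{1-2}. For this I would invoke the weak formulation \eqref{1-6}: the first equation there, $\int_{\R^2}\omega\,\nabla^\perp f_\Om\cdot\nabla\phi\,dx=0$ for all test $\phi$, is by the divergence-theorem computation in Lemma \ref{2-5} exactly equivalent to $\nabla^\perp f_\Om\cdot\mathbf n=0$ on $\partial D$, i.e. $f_\Om$ constant on each component of $\partial(D_2\setminus\overline{D_1})$, which is the $F_1=F_2=0$ condition; and the second equation of \eqref{1-6} is the point-vortex relation, i.e. $G=0$. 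A minor technical check is that $C^{1,\alpha}$ regularity of the boundary is preserved under the radial-graph ansatz provided $r_l$ is small in a suitable Hölder norm, so that the parametrization is an embedding and the domains $D_1\Subset D_2$ remain disjoint with $x_1\notin\overline{D_2\setminus D_1}$; this is where the smallness built into the Crandall–Rabinowitz framework enters, but for the statement of the lemma it is enough to record the formal equivalence on the class of admissible even deformations. I do not expect the computations themselves to pose difficulty — they are the standard contour-dynamics reductions — so the write-up is mainly bookkeeping of polar-coordinate changes of variables and careful tracking of the $\perp$ operator.
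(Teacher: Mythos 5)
Your proposal is correct and arrives at exactly the same equations, but it enters through a slightly different door than the paper. The paper derives $F_1,F_2$ from the time-dependent kinematic boundary condition $\partial_t z_l\cdot\mathbf{n}_l=\mathbf{u}\cdot\mathbf{n}_l$ of contour dynamics (quoted from the Hmidi--Mateu--Verdera boundary-regularity paper), computes $\partial_t z_l\cdot\mathbf{n}_l=-\Om r_l'(\theta)$ from the rigid-rotation ansatz, identifies $\mathbf{u}\cdot\mathbf{n}_l$ with $\partial_\theta\{N*\omega+N(z_l-x_1)\}$, and only afterwards strips out the time variable by rotational invariance of the convolution. You instead start from the stationary weak formulation and Lemma \ref{2-5} (constancy of $f_\Om=1_D*N+N(\cdot-x_1)+\frac{\Om}{2}|x|^2$ on each boundary component), which is equivalent after one line since $\partial_\theta\bigl(f_\Om(z_l(\theta))\bigr)=\nabla f_\Om\cdot\partial_\theta z_l=-\nabla^\perp f_\Om\cdot\mathbf{n}_l$; your route has the advantage of tying the lemma directly to the paper's own Definition of weak solution (system \eqref{1-6}) rather than importing the kinematic condition from a reference, while the paper's route makes the dynamical meaning of the ansatz \eqref{parametrization of z1t z2t} explicit. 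The remaining ingredients coincide: the identity $\frac{\Om}{2}\partial_\theta R_l^2=\Om r_l'$, the polar-coordinate rewriting of $N*\omega$, the reduction of the vector point-vortex equation to the scalar equation $G=0$ by showing that evenness of $r_1,r_2$ forces the second velocity component at $x_1=(x_1^1,0)$ to vanish (the paper's computation $A(r_1,r_2,x_1^1)=-A(r_1,r_2,x_1^1)$), and the verification of the trivial branch by rotational invariance. Your closing remark on the embedding/disjointness of the perturbed annuli under the smallness constraint $\varepsilon<\min\{a_1/3,(a_2-a_1)/3\}$ is a point the paper defers to the regularity section, so no gap there either.
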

        \begin{proof}
            Let us first consider, for any time $t\geq 0$ and $l=1,2$, the parametrizations $z_l(\cdot,t):\T\to\partial D_{l,t}$ of the boundaries. According to \cite[page 174]{Hmidi-boundary}, we find, for any $\theta\in \T$ and $t\geq 0$, that
            \begin{equation}\label{4.5}
                \partial_t z_l(\theta,t)\cdot \mathbf{n}_l(z_l(\theta,t),t)=\mathbf{u}(z_l(\theta,t),t)\cdot \mathbf{n}_l,\quad {\rm for }\ l=1,2,
            \end{equation}
            where $\mathbf{n}_l=-i \frac{\partial z_l}{\partial \theta}$ is the normal vector to the boundary $\partial D_{l,t}$ at the point $z_l(\theta,t)$.
            
            On the one hand, the parametrization $z_l(\theta,t)=R_l(\theta)e^{i\theta}e^{i\Om t}$ shows that 
            \begin{align*}
                \partial_t z_l(\theta,t)\cdot \mathbf{n}_l(z_l(\theta,t),t)&=i\Om R_l(\theta)e^{i\theta}e^{i\Om t}\cdot(-i\partial_\theta(R_l(\theta)e^{i\theta}e^{i\Om t}))\\
                &=-\Om r'_l(\theta).
            \end{align*}
            On the other hand, due to \eqref{VWE k-1}, we have
            \begin{align*}
                \mathbf{u}(z_l(\theta,t),t)\cdot \mathbf{n}_l&=-\nabla^\perp \left\{N*\omega(z_l(\theta,t),t)+ N(z_l(\theta,t)-x_1(t))\right\}\cdot \left\{i \partial_\theta z_l\right\}\\
                &=i\nabla\left\{N*\omega (z_l(\theta,t),t)+N(z_l(\theta,t)-x_1(t)) \right\}\cdot \left\{i\partial_\theta z_l\right\}\\
                &=\partial_\theta\left\{N*\omega (z_l(\theta,t),t)+N(z_l(\theta,t)-x_1(t))\right\}.
            \end{align*}
            Thus \eqref{4.5} can be written as
            \begin{equation}
                \Om r_l'(\theta)+\partial_\theta\left\{N*\omega (z_l(\theta,t),t)+N(z_l(\theta,t)-x_1(t))\right\}=0,\quad {\rm for}\ l=1,2.
            \end{equation}
            
            Thanks to the uniformly rotating property of $\omega(x,t)$ and $x_1(t)$, one sees that the time-variable can be discarded from the equations.
            In fact, we obtain that, for $l=1,2$,
            \begin{equation*}
                \ln|z_l(\theta,t)-x_1(t)|=\ln|R_l(\theta)e^{i\theta}-(x_1^1,0)|,
            \end{equation*}
            and 
            \begin{align*}
                N*\omega(z_l(\theta,t),t)&=-\frac{1}{2\pi}\int_{D_0 e^{i\Om t}}\ln|R_l(\theta)e^{i\theta}e^{i\Om t}-y|\,dy\\
                &=-\frac{1}{2\pi}\int_{D_0}\ln|R_l(\theta)e^{i\theta}e^{i\Om t}-ye^{i\Om t}|\,dy\\
                &=-\frac{1}{2\pi}\int_{\R^2}\ln|R_l(\theta)e^{i\theta}-y|1_{D_0}\,dy\\
                &=N*\omega(R_l(\theta)e^{i\theta},0).
            \end{align*}
            
            As for $x_1(t)$, let us consider the second component of dynamic equation of the point vortex. Denote 
            \[
                A(r_1,r_2,x_1^1)=\frac{1}{2\pi}\int_0^{2\pi}\int_{R_1(\eta)}^{R_2(\eta)}\frac{-\rho \sin\eta}{|(x_1^1,0)-\rho e^{i\eta}|^2}\rho \,d\rho\,d\eta.
            \]
            It follows from the even symmetry of $r_1,r_2$ that
            \begin{equation*}
                \begin{split}
                    A(r_1,r_2,x_1^1)=&-\frac{1}{2\pi}\int_0^{-2\pi}\int_{R_1(-\eta)}^{R_2(-\eta)}\frac{-\rho \sin(-\eta)}{|(x_1^1,0)-\rho e^{-i\eta}|^2}\rho \,d\rho\,d\eta\\
                    =&-\frac{1}{2\pi}\int_0^{2\pi}\int_{R_1(\eta)}^{R_2(\eta)}\frac{-\rho \sin\eta}{|(x_1^1,0)-\rho e^{i\eta}|^2}\rho \,d\rho\,d\eta\\
                    =&-A(r_1,r_2,x_1^1).
                \end{split}
            \end{equation*}
            Then we find that $A(r_1,r_2,x_1^1)=0$, as soon as $r_1$ and $r_2$ are even. So we need only focus on the first component of dynamic equation of the point vortex.
            Therefore, we have 
            \begin{align*}
                &\Om r_1'(\theta)=\frac{1}{2\pi}\partial_\theta\left(\int_0^{2\pi}\int_{R_1(\eta)}^{R_2(\eta)}\ln|R_1(\theta)e^{i\theta}-\rho e^{i\eta}|\rho \,d\rho\,d\eta+\ln|R_1(\theta)e^{i\theta}-(x_1^1,0) |\right),\\
                &\Om r_2'(\theta)=\frac{1}{2\pi}\partial_\theta\left(\int_0^{2\pi}\int_{R_1(\eta)}^{R_2(\eta)}\ln|R_2(\theta)e^{i\theta}-\rho e^{i\eta}|\rho \,d\rho\,d\eta+\ln|R_2(\theta)e^{i\theta}-(x_1^1,0) |\right),\\
                &\Om x_1^1=\frac{1}{2\pi}\int_0^{2\pi}\int_{R_1(\eta)}^{R_2(\eta)}\frac{x_1^1-\rho \cos\eta}{(x_1^1-\rho\cos\eta)^2+(\rho \sin\eta)^2}\rho \,d\rho\,d\eta.
            \end{align*}
            This derives the nonlinear contour systems as it is claimed in Lemma \ref{Countor equation lemma}, above.

            Now it is easy to ascertain that $(\Om,0,0,0)$ is a zero of the functional systems $(F_1,F_2,G)$ for any $\Om\in\R.$
            In fact, followed by a change of variables, it implies that $\ln|a_le^{i\theta}|=\ln|a_l|$ and
            \begin{equation}
                \int_{0}^{2\pi}\int_{a_1}^{a_2}\ln|a_l e^{i\theta}-\rho e^{i\eta}|\rho\,d\rho\,d\eta=\int_{0}^{2\pi}\int_{a_1}^{a_2}\ln|a_l-\rho e^{i\eta}|\rho\,d\rho\,d\eta,
            \end{equation}
            thereby yielding that
            \begin{equation}
                \partial_\theta \left\{\int_{0}^{2\pi}\int_{a_1}^{a_2}\ln|a_l e^{i\theta}-\rho e^{i\eta}|\rho\,d\rho\,d\eta-\ln|a_l e^{i\theta}|\right\}=0.
            \end{equation}
            Furthermore, we obtain that
            \begin{equation}
                \int_{0}^{2\pi}\int_{a_1}^{a_2}\rho \cos\eta\,d\rho\,d\eta=0.
            \end{equation}
            The proof is thus completed.
        \end{proof}

        Next we give an simple but important symmetry property of the nonlinear functional systems $(F_1,F_2)$.
        \begin{lemma}\label{symmetry property of F1 F2 G}
            Let $\Om\in\R$, $r_1,r_2\in C^{1+\alpha}(\T)$ for some $\alpha\in(0,1)$, $x_1=(x_1^1,0)\in\R^2$ and $F_1,F_2$ be given by \eqref{Def of (F_1,F_2,G)}. If moreover $r_1,r_2$ are both even functions about $\theta$, i.e.
            \begin{equation*}
                r_l(-\theta)=r_l(\theta),\quad for\ all\ \theta\in[0,2\pi]\ and\ l=1,2,
            \end{equation*}
            then, $(F_1,F_2)(\Om,r_1,r_2,x_1^1)$ are odd functions, i.e.
            \begin{equation*}
                (F_1,F_2)(\Om,r_1,r_2,x_1^1)(-\theta)=-(F_1,F_2)(\Om,r_1,r_2,x_1^1)(\theta),\quad for\ all\ \theta\in[0,2\pi].
            \end{equation*}
        \end{lemma}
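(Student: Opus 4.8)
The plan is to trace the $\theta$-dependence through the definitions of $F_1$ and $F_2$ in \eqref{Def of (F_1,F_2,G)} and use that $R_l(\theta)=\sqrt{a_l^2+2r_l(\theta)}$ inherits evenness from $r_l$. First I would observe that since $r_l(-\theta)=r_l(\theta)$, we have $R_l(-\theta)=R_l(\theta)$ and $R_l'(-\theta)=-R_l'(\theta)$, so $r_l'(-\theta)=-r_l'(\theta)$; hence the leading term $\Om r_l'(\theta)$ in each $F_l$ is already odd. It remains to show that the quantity
\begin{equation*}
    I_l(\theta):=\partial_\theta\left\{\int_0^{2\pi}\int_{R_1(\eta)}^{R_2(\eta)}\ln|R_l(\theta)e^{i\theta}-\rho e^{i\eta}|\rho\,d\rho\,d\eta+\ln|R_l(\theta)e^{i\theta}-(x_1^1,0)|\right\}
\end{equation*}
is odd in $\theta$, which by the chain rule amounts to showing that the function
\begin{equation*}
    J_l(\theta):=\int_0^{2\pi}\int_{R_1(\eta)}^{R_2(\eta)}\ln|R_l(\theta)e^{i\theta}-\rho e^{i\eta}|\rho\,d\rho\,d\eta+\ln|R_l(\theta)e^{i\theta}-(x_1^1,0)|
\end{equation*}
is \emph{even} in $\theta$ (the derivative of an even function is odd).

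For the point-vortex term, $|R_l(\theta)e^{i\theta}-(x_1^1,0)|^2=R_l(\theta)^2-2R_l(\theta)x_1^1\cos\theta+(x_1^1)^2$, which is manifestly even in $\theta$ because $R_l$ is even and $\cos$ is even. For the double integral, the key step is the substitution $\eta\mapsto-\eta$ in the $\eta$-integral: using $R_l(-\eta)=R_l(\eta)$ the inner limits of integration are unchanged, $\rho\,d\rho$ is unchanged, and $|R_l(-\theta)e^{-i\theta}-\rho e^{-i\eta}|=|\overline{R_l(\theta)e^{i\theta}-\rho e^{i\eta}}|=|R_l(\theta)e^{i\theta}-\rho e^{i\eta}|$ (here I use $R_l(-\theta)=R_l(\theta)$ together with complex conjugation). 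Therefore $J_l(-\theta)=J_l(\theta)$, i.e. $J_l$ is even, so $I_l$ is odd, and combined with the oddness of $\Om r_l'(\theta)$ we conclude $(F_1,F_2)(\Om,r_1,r_2,x_1^1)(-\theta)=-(F_1,F_2)(\Om,r_1,r_2,x_1^1)(\theta)$.

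The only mild technical point — not a genuine obstacle — is justifying that one may differentiate $J_l$ under the integral sign and that the change of variables is legitimate; this is routine since for $r_1,r_2$ small the configuration $D_2\setminus\overline{D_1}$ stays away from $(x_1^1,0)$, the logarithmic kernel is integrable, and $R_l\in C^{1+\alpha}(\T)$ ensures enough regularity for the $\partial_\theta$ to pass inside. I expect the bookkeeping with the complex conjugate identity $|R_l(-\theta)e^{-i\theta}-\rho e^{-i\eta}|=|R_l(\theta)e^{i\theta}-\rho e^{i\eta}|$ to be the step requiring the most care, since it simultaneously uses the evenness of $R_l$ in \emph{both} the $\theta$ and the $\eta$ variables.
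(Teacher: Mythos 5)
Your proposal is correct and follows essentially the same route as the paper: both reduce the claim to the oddness of $r_l'$ plus the evenness of the double-integral and point-vortex terms, the former via the substitution $\eta\mapsto-\eta$ (using evenness of $R_1,R_2$ and $|\bar z|=|z|$) and the latter via the explicit formula $|R_l(\theta)e^{i\theta}-(x_1^1,0)|^2=R_l(\theta)^2-2R_l(\theta)x_1^1\cos\theta+(x_1^1)^2$. No gaps.
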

        \begin{proof}
            By direct computation, it shows that, if $r$ is an even function, then $r'$ is odd. Therefore, we can restrict our focus on showing that the following functions
            \begin{gather*}
                g_1^{j,l}(\theta):=\int_0^{2\pi}\int_0^{R_l(\eta)}\ln|R_j(\theta) e^{i\theta}-\rho e^{i\eta}|\rho\,d\rho\,d\eta,\quad{\rm  for\ all}\ j,l\in\{1,2\},
            \end{gather*}
            and 
            $$g_2^l(\theta):=\theta\mapsto \ln|R_l(\theta)e^{i\theta}-(x_1^1,0)|,\quad {\rm for\ all}\ l=1,2$$
            are even, as soon as $r_1$ and $r_2$ are even. 

            In fact, $g_1^{j,l}$ is even followed by the change of variables $\eta\to-\eta$. As for $g_2^l$, the fact $x_1=(x_1^1,0)$ implies that $|R_l(\theta)e^{i\theta}-(x_1^1,0)|=\sqrt{(R_l(\theta)\cos\theta-x_1^1)^2+(R_l(\theta)\sin\theta)^2}$.
            The proof of Lemma \ref{symmetry property of F1 F2 G} is completed.
        \end{proof}

    \subsection{Regularity analysis}
        In this section, we first discuss the Banach spaces involved in our application of the Crandall-Rabinowitz theorem and then check the regularity properties of the nonlinear contour systems $(F_1,F_2,G)$ introduced in \eqref{Def of (F_1,F_2,G)}, whose zeros decided the uniformly rotating solutions of vortex wave systems \eqref{VWE k-1}.
        %The computations are very heavy and can be done in a straightforward way without new difficulties compared to the simply connected case treated in the paper [19]. 
        
        We define, for a real number $\alpha\in(0,1)$, the function spaces $X^\alpha$ and $Y^\alpha$ by
        \begin{equation}\label{Xa-space}
            X^\alpha=\left\{f\in C^{1+\alpha}(\T):f(\theta)=\sum_{n=1}^\infty f_n\cos (n\theta),\ f_n\in\R,\ \theta\in\T\right\}
        \end{equation}
        and
        \begin{equation}\label{Ya-Space}
            Y^\alpha=\left\{g\in C^{\alpha}(\T):g(\theta)=\sum_{n=1}^\infty g_n\sin (n\theta),\ g_n\in\R,\ \theta\in\T\right\}
        \end{equation}
        and equipped with the usual $C^{1+\alpha}$ and $C^\alpha$ norms, respectively. 
        
        Furthermore, for a given $\varepsilon\in\left(0,\min\{\frac{a_2-a_1}{3},\frac{a_1}{3}\}\right),$ we define
        \begin{equation}\label{Bae-Ball}
            B_{\varepsilon}^\alpha=\left\{f\in X^\alpha: \|f\|_{C^{1+\alpha}}<\varepsilon\right\},
        \end{equation}
        and 
        \begin{equation}\label{B-Ball}
            B_\varepsilon=\left\{x_1^1\in\R:|x_1^1|<\varepsilon\right\}.
        \end{equation}
        
        The following proposition summarizes the regularity properties of the nonlinear functional systems $(F_1,F_2,G)$.
        \begin{proposition}[Regularity]\label{Regularity of F1 F2 G}
            Let $0<a_1<a_2$ and $\alpha\in(0,1)$. There exists a small parameter $\varepsilon\in(0,\min\{a_1/3,(a_2-a_1)/3\})$ such that for the functional
            \begin{equation}
                (F_1,F_2,G):\R\times B_\varepsilon^\alpha\times B_\varepsilon^\alpha\times B_\varepsilon\to Y^\alpha\times Y^\alpha\times \R
            \end{equation}
            is well-defined and of class $C^1$. Moreover, the partial derivative $\partial_\Om  d_{(r_1,r_2,x_1^1)}(F_1,F_2,G)$ exists in the sense that 
            \begin{equation}
                \partial_\Om d_{(r_1,r_2,x_1^1)}(F_1,F_2,G):\R\times B_\varepsilon^\alpha\times B_\varepsilon^\alpha\times B_\varepsilon\to \mathcal{L}(X^\alpha\times X^\alpha\times\R,Y^\alpha\times Y^\alpha\times \R )
            \end{equation}
            is continuous.
        \end{proposition}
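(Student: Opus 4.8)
The plan is to split $(F_1,F_2,G)$ according to its dependence on $\Om$ and on the three geometric ingredients — the two boundary curves and the point vortex — and to reduce the single genuinely singular piece to the single‑layer‑potential and Calder\'on--Zygmund estimates that are by now standard in the doubly connected $V$‑state literature \cite{Del,HMV}. First write $F_l=\Om\,r_l'-\mathcal N_l(r_1,r_2,x_1^1)$ for $l=1,2$ and $G=\Om\,x_1^1-\mathcal M(r_1,r_2,x_1^1)$, where $\mathcal N_1,\mathcal N_2,\mathcal M$ carry no $\Om$‑dependence. Since $\partial_\theta$ sends $\cos(n\theta)$ to $-n\sin(n\theta)$, it is a bounded operator $X^\alpha\to Y^\alpha$, so $(\Om,r_1,r_2,x_1^1)\mapsto(\Om r_1',\Om r_2',\Om x_1^1)$ is $C^\infty$ into $Y^\alpha\times Y^\alpha\times\R$ with mixed derivative $\partial_\Om d_{(r_1,r_2,x_1^1)}$ equal to the \emph{constant} bounded operator $(h_1,h_2,h)\mapsto(h_1',h_2',h)$; as $\mathcal N_l,\mathcal M$ are $\Om$‑independent, this is all of $\partial_\Om d_{(r_1,r_2,x_1^1)}(F_1,F_2,G)$, which therefore exists and is trivially continuous. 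It thus remains to show that $\mathcal N_1,\mathcal N_2$ and $\mathcal M$ are well‑defined and $C^1$ on $B_\varepsilon^\alpha\times B_\varepsilon^\alpha\times B_\varepsilon$; that $\mathcal N_l$ takes values in the closed subspace $Y^\alpha\subset C^\alpha(\T)$ then follows from the oddness in $\theta$ of Lemma~\ref{symmetry property of F1 F2 G}, and the derivatives land there automatically.

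Next, everything involving the point vortex is smooth because the vortex stays uniformly away from both boundary curves. For $\varepsilon$ small enough within the stated range, $|r_l|<\varepsilon$ keeps $R_l^2=a_l^2+2r_l$ in a fixed compact subinterval of $(0,\infty)$ with $R_1<\tfrac{a_1+a_2}{2}<R_2$, and $|x_1^1|<\varepsilon$ keeps $(x_1^1,0)$ uniformly inside the hole $D_1$, so that $|R_l(\theta)e^{i\theta}-(x_1^1,0)|$ and, for $\rho\in[R_1(\eta),R_2(\eta)]$, $|(x_1^1,0)-\rho e^{i\eta}|$ are bounded below by a fixed positive constant. The Nemytskii map $r_l\mapsto R_l=\sqrt{a_l^2+2r_l}$ is $C^\infty$ from $B_\varepsilon^\alpha$ into $C^{1+\alpha}(\T)$, hence so is $r_l\mapsto R_l(\theta)e^{i\theta}$ into $C^{1+\alpha}(\T;\R^2)$; composing with $z\mapsto\ln|z-(x_1^1,0)|$, which is real‑analytic near the curve, and applying $\partial_\theta$ shows the point‑vortex part of $\mathcal N_l$ is a $C^\infty$ map of $(r_l,x_1^1)$ into $C^\alpha(\T)$. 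Likewise the integrand defining $\mathcal M$ is smooth in $(\rho,\eta,x_1^1)$ with parameter‑uniform bounds on the relevant region, and the limits $R_1(\eta),R_2(\eta)$ depend smoothly on $(r_1,r_2)$, so $\mathcal M$ is $C^\infty$ into $\R$ by differentiation under the integral sign.

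The only genuinely nonlocal and singular contribution to $\mathcal N_l$ is
\begin{align*}
V_l(\theta)&:=\partial_\theta\int_0^{2\pi}\!\!\int_{R_1(\eta)}^{R_2(\eta)}\ln\big|R_l(\theta)e^{i\theta}-\rho e^{i\eta}\big|\,\rho\,d\rho\,d\eta\\
&\;=-2\pi\,\nabla\mathcal P\big(R_l(\theta)e^{i\theta}\big)\cdot\partial_\theta\big(R_l(\theta)e^{i\theta}\big),\qquad \mathcal P:=N*1_{D_2\setminus\overline{D_1}}.
\end{align*}
For well‑definedness one may integrate by parts to write $\nabla\mathcal P$ as a single‑layer potential over $\partial D_1\cup\partial D_2$ with $C^\alpha$ density (the unit normals of the $C^{1,\alpha}$ curves); such a potential has bounded $C^\alpha$ gradient up to each curve, hence is Lipschitz, so $\nabla\mathcal P\in C^\alpha(\R^2)$, and since $\partial_\theta(R_l(\theta)e^{i\theta})\in C^\alpha(\T)$ it follows that $V_l\in C^\alpha(\T)$ (and $V_l\in Y^\alpha$ by Lemma~\ref{symmetry property of F1 F2 G}). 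For the $C^1$ dependence on $(r_1,r_2)$ one cannot differentiate under the integral naively because of the diagonal singularity at $\{\eta=\theta,\ \rho=R_l(\theta)\}$: away from a fixed neighbourhood of it the integrand and all its parameter derivatives are bounded, giving a $C^\infty$ map into $C^\alpha$, while near the diagonal one performs the change of variables $R_l(\theta)e^{i\theta}-\rho e^{i\eta}=e^{i\theta}(R_l(\theta)-\rho e^{i(\eta-\theta)})$ and expresses $V_l$, $d_{r_1}V_l$ and $d_{r_2}V_l$ as finite sums of operators of the form $g\mapsto\mathrm{p.v.}\!\int_\T\!\tfrac{g(\theta)-g(\eta)}{\,\cdots\,}\,d\eta$ and of weakly singular operators with $L^1$‑in‑$\eta$ kernels, each bounded on $C^\alpha(\T)$ with norm continuous in $(r_1,r_2)$ (the $\rho$‑integration over an interval of length bounded below only helps); continuity of the derivatives in all four variables then follows from these uniform bounds and dominated convergence. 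The main obstacle is precisely this last bookkeeping — checking that every kernel generated by $\partial_\theta$ and by the shape derivatives $d_{r_1},d_{r_2}$ of the annular potential is of Calder\'on--Zygmund or weakly singular type with bounds continuous in $(r_1,r_2)$ — but it is by now routine, being essentially identical to the doubly connected $V$‑state computations of \cite{Del,HMV}; the point vortex introduces nothing new since, by the previous step, it contributes only smooth lower‑order terms. Assembling these facts gives the proposition.
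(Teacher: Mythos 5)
Your proposal is correct and follows essentially the same route as the paper: both split $(F_1,F_2,G)$ into the $\Omega$-linear part, the smooth point-vortex/point-dynamics contributions (handled via the uniform separation between $(x_1^1,0)$ and the perturbed annulus guaranteed by the choice of $\varepsilon$), and the singular self-interaction of the annular patch, whose $C^1$ regularity is delegated to the established doubly connected $V$-state machinery (the paper cites Proposition~3.2 of \cite{Hmidi-2024-Lake} with $b\equiv 1$ where you cite \cite{Del,HMV}). Your explicit identification of $\partial_\Omega d_{(r_1,r_2,x_1^1)}(F_1,F_2,G)$ as the constant bounded operator $(h_1,h_2,b)\mapsto(h_1',h_2',b)$ is a slightly more self-contained version of the paper's observation that $\partial_\Omega d f_l^2=0$, and introduces no gap.
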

        \begin{proof}
            Firstly, thanks to the previous results in Proposition 3.2 in \cite{Hmidi-2024-Lake} and the symmetry properties of $(F_1,F_2)$ proved in Lemma \ref{symmetry property of F1 F2 G}, by setting $b=1$, we know that, for $l=1,2$ and some small parameter $\varepsilon\in(0,\min\{a_1/3,(a_2-a_1)/3\})$, the function 
            \[ 
                f^1_l(\Om,r_1,r_2,x_1^1):=\Om r_l'(\theta)-\frac{1}{2\pi}\partial_\theta\left\{\int_0^{2\pi}\int_{R_1(\eta)}^{R_2(\eta)}\ln|R_l(\theta)e^{i\theta}-\rho e^{i\eta}|\rho\,d\rho\,d\eta\right\} 
            \] 
            is well-defined from $\R\times B_\varepsilon^\alpha\times B_\varepsilon^\alpha\times B_\varepsilon$ to $Y^\alpha$ and of class $C^1$. Moreover, the partial derivative $\partial_\Om d_{(r_1,r_2,x_1^1)}f^1_l$ exists in the sense that 
            \begin{equation}
                \partial_\Om d_{(r_1,r_2,x_1^1)}f^1_l:\R\times B_\varepsilon^\alpha\times B_\varepsilon^\alpha\times B_\varepsilon\to \mathcal{L}(X^\alpha\times X^\alpha\times\R,Y^\alpha)
            \end{equation}
            is continuous.

            As for the function 
            \[ 
                f_l^2(\Om,r_1,r_2,x_1^1):=\partial_\theta \{\ln|R_l(\theta)e^{i\theta}-(x_1^1,0)|\}=\frac{R_l(\theta)e^{i\theta}-(x_1^1,0)}{|R_l(\theta)e^{i\theta}-(x_1^1,0)|^2}\cdot\partial_\theta(R_l(\theta)e^{i\theta}),
            \]
            due to the result in \cite[Page 5]{ZGQ Nonlinear Methods}, it follows that $f^2_l :\R\times B_\varepsilon^\alpha\times B_\varepsilon^\alpha\times B_\varepsilon \to Y^\alpha$ is well-defined and of class $C^1$ for $l=1,2$. Furthermore, we can directly compute that $\partial_\Om d_{(r_1,r_2,x_1^1)}f_l^2(\Om,r_1,r_2,x_1^1)=0$, for all $(\Om,r_1,r_2,x_1^1)\in \R\times B_\varepsilon^\alpha\times B_\varepsilon^\alpha\times B_\varepsilon$.
            
            Now we are only left to prove the regularity of $G(\Om,r_1,r_2,x_1^1)$. In fact, due to the chosen of $\varepsilon$ and the results in \cite[Page 5]{ZGQ Nonlinear Methods}, an argument similar to the $f_l^2$ shows that the function $G(\Om,r_1,r_2,x_1^1)$ conforms to the regularity conditions specified in the proposition. 
            
            This completes the proof of Proposition \ref{Regularity of F1 F2 G}.
        \end{proof}

    \subsection{Spectral analysis}
        This part is crucial for implementing the Crandall-Rabinowitz theorem. We shall in particular compute the linearized operator $d_{(r_1,r_2,x_1^1)}(F_1,F_2,G)$ around the trivial solution and look for the values of $\Om$ associated with the nontrivial kernel. For these values of $\Om$, we shall see that the linearized operator has a one-dimensional kernel and is in fact of Fredholm type with zero index. Before giving the main
        result of this subsection, we first compute the linearized operator at any point.
        \begin{proposition}\label{differential at any point}
            Let $(F_1,F_2,G)$ be given by \eqref{Def of (F_1,F_2,G)}. Let $\varepsilon\in(0,\min\{(a_2-a_1)/3,a_1/3\})$ be a given small parameter and $\alpha\in(0,1)$ in such a way that $(F_1,F_2,G)$ are differentiable according to Proposition \ref{Regularity of F1 F2 G}. 
            Then the differential of $(F_1,F_2,G)$ at $(r_1,r_2,x_1^1)\in B_{\varepsilon}^\alpha\times B_{\varepsilon}^\alpha\times B_\varepsilon$ in the direction $[h_1,h_2,b]\in X^\alpha\times X^\alpha \times \R$ is given by
            \begin{equation}
                d_{(r_1,r_2,x_1^1)}(F_1,F_2,G)(\Om,r_1,r_2,x_1^1)[h_1,h_2,b]=I+II,
            \end{equation}
            where we set $ I=(I_1,I_2,I_3)^T$, $II=(II_1,II_2,II_3)^T$, for all $\theta\in[0,2\pi]$, 
            \begin{align*}
                I_1=&\Om h_1'+\partial_\theta\left\{\int_0^{2\pi}\int_{R_1(\eta)}^{R_2(\eta)}\nabla N(R_1(\theta)e^{i\theta}-\rho e^{i\eta})\cdot e^{i\theta}\frac{h_1(\theta)}{R_1(\theta)}\rho\,d\rho\,d\eta\right.\\
                &\quad\quad \qquad+ \nabla N(R_1(\theta) e^{i\theta}-(x_1^1,0))\cdot e^{i\theta}\frac{h_1(\theta)}{R_1(\theta)} \\
                &\quad\quad \qquad\left.+\int_{0}^{2\pi}\left[N(R_1(\theta)e^{i\theta}-R_2(\eta)e^{i\eta})h_2(\eta)-N(R_1(\theta e^{i\theta})-R_1(\eta)e^{i\eta})h_1(\eta)\right]\,d\eta \right\},\\
                I_2=&\Om h_2'+\partial_\theta\left\{\int_0^{2\pi}\int_{R_1(\eta)}^{R_2(\eta)}\nabla N(R_2(\theta)e^{i\theta}-\rho e^{i\eta})\cdot e^{i\theta}\frac{h_2(\theta)}{R_2(\theta)}\rho\,d\rho\,d\eta\right.\\
                &\quad\quad \qquad+ \nabla N(R_2(\theta) e^{i\theta}-(x_1^1,0))\cdot e^{i\theta}\frac{h_2(\theta)}{R_2(\theta)} \\
                &\quad\quad \qquad\left.-\int_{0}^{2\pi}\left[N(R_2(\theta)e^{i\theta}-R_1(\eta)e^{i\eta})h_1(\eta)-N(R_2(\theta e^{i\theta})-R_2(\eta)e^{i\eta})h_2(\eta)\right]\,d\eta \right\},\\
                I_3=&-\frac{1}{2\pi}\int_{0}^{2\pi}\frac{x_1^1-R_2(\eta)\cos\eta}{(x_1^1-R_2(\eta)\cos\eta)^2+(R_2(\eta)\sin\eta)^2}h_2(\eta)\,d\eta \\
                &+\frac{1}{2\pi}\int_0^{2\pi}\frac{x_1^1-R_1(\eta)\cos\eta}{(x_1^1-R_1(\eta)\cos\eta)^2+(R_1(\eta)\sin\eta)^2}h_1(\eta) \,d\eta
            \end{align*}
            and
            \begin{equation*}
                \begin{split}
                    II_1=&\frac{b}{2\pi}\frac{\partial_\theta (R_1(\theta)\cos\theta)}{|R_1(\theta)e^{i\theta}-(x_1^1,0)|^2}-\frac{b}{2\pi}\frac{(R_1(\theta)\cos\theta-x_1^1)\partial_\theta|R_1(\theta)e^{i\theta}-(x_1^1,0)|^2 }{|R_1(\theta)e^{i\theta}-(x_1^1,0)|^4},\\
                    II_2=&\frac{b}{2\pi}\frac{\partial_\theta (R_2(\theta)\cos\theta)}{|R_2(\theta)e^{i\theta}-(x_1^1,0)|^2}-\frac{b}{2\pi}\frac{(R_2(\theta)\cos\theta-x_1^1)\partial_\theta |R_2(\theta)e^{i\theta}-(x_1^1,0)|^2}{|R_2(\theta)e^{i\theta}-(x_1^1,0)|^4},\\
                    II_3=&\Om b-\frac{b}{2\pi}\int_0^{2\pi}\int_{R_1(\eta)}^{R_2(\eta)}\frac{(\rho\sin\eta)^2-(x_1^1-\rho\cos\eta)^2}{|(x_1^1,0)-\rho e^{i\eta}|^4}\rho\,d\rho\,d\eta.
                \end{split}
            \end{equation*}
        \end{proposition}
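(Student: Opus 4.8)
Since Proposition \ref{Regularity of F1 F2 G} already asserts that $(F_1,F_2,G)$ is of class $C^1$ on $\R\times B_\varepsilon^\alpha\times B_\varepsilon^\alpha\times B_\varepsilon$, the Fr\'echet differential exists; hence it suffices to compute the Gateaux derivative
\[
\frac{d}{ds}\Big|_{s=0}(F_1,F_2,G)\big(\Om,\,r_1+sh_1,\,r_2+sh_2,\,x_1^1+sb\big),
\]
which, by uniqueness of the differential, coincides with $d_{(r_1,r_2,x_1^1)}(F_1,F_2,G)[h_1,h_2,b]$. Two preliminary observations organise the computation. First, using $N(x)=-\tfrac1{2\pi}\ln|x|$ one rewrites, for $l=1,2$,
\[
F_l=\Om r_l'(\theta)+\partial_\theta\Big\{\int_0^{2\pi}\!\!\int_{R_1(\eta)}^{R_2(\eta)}N\big(R_l(\theta)e^{i\theta}-\rho e^{i\eta}\big)\rho\,d\rho\,d\eta+N\big(R_l(\theta)e^{i\theta}-(x_1^1,0)\big)\Big\},
\]
so that every $r$- and $x_1^1$-dependence enters either through the argument of $N$ or through the $\rho$-integration limits. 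Second, from $R_l=\sqrt{a_l^2+2r_l}$ one has the elementary identity $\frac{d}{ds}R_l^{(s)}(\theta)\big|_{s=0}=h_l(\theta)/R_l(\theta)$, where $R_l^{(s)}:=\sqrt{a_l^2+2(r_l+sh_l)}$; this factor propagates through all of the terms, and in the boundary terms below the extra factor $R_j(\eta)$ coming from the area element $\rho\,d\rho$ cancels the $1/R_j$ coming from $dR_j/ds$.

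\textbf{Differentiating $F_1$ and $F_2$.} Commuting $\partial_\theta$ with $\tfrac{d}{ds}$ (legitimate by the smoothness of the Newtonian-type potentials involved, which underpins Proposition \ref{Regularity of F1 F2 G}), we differentiate term by term. In the self-interaction double integral of $F_l$, the dependence on $(r_1,r_2)$ is twofold: through $R_l(\theta)$ in the kernel argument, whose $s$-derivative yields $\nabla N\big(R_l(\theta)e^{i\theta}-\rho e^{i\eta}\big)\cdot e^{i\theta}\,h_l(\theta)/R_l(\theta)$ under the integral sign (the first line of $I_l$); and through the $\rho$-integration limits $R_1(\eta),R_2(\eta)$, where Leibniz's rule produces $+N\big(R_l(\theta)e^{i\theta}-R_2(\eta)e^{i\eta}\big)h_2(\eta)$ from the upper limit and $-N\big(R_l(\theta)e^{i\theta}-R_1(\eta)e^{i\eta}\big)h_1(\eta)$ from the lower limit, integrated in $\eta$ (the third line of $I_l$, displayed with an overall minus sign when $l=2$). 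The point-vortex logarithm $N\big(R_l(\theta)e^{i\theta}-(x_1^1,0)\big)$ contributes $\nabla N\big(R_l(\theta)e^{i\theta}-(x_1^1,0)\big)\cdot e^{i\theta}\,h_l(\theta)/R_l(\theta)$ when differentiated in $r_l$ (the second line of $I_l$), and, when differentiated in $x_1^1$, after writing $|R_l(\theta)e^{i\theta}-(x_1^1,0)|^2=(R_l(\theta)\cos\theta-x_1^1)^2+(R_l(\theta)\sin\theta)^2$ and applying the quotient rule, precisely the expression $II_l$. Finally $\tfrac{d}{ds}(\Om r_l')\big|_{s=0}=\Om h_l'$, the leading term of $I_l$.

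\textbf{Differentiating $G$.} The integrand of $G$ is independent of $(r_1,r_2)$, so differentiating in those directions acts only on the $\rho$-limits; Leibniz's rule, with the same cancellation of $R_j(\eta)$ against $1/R_j$, produces the two boundary integrals making up $I_3$. Differentiating $G$ in $x_1^1$ gives $\Om b$ together with $-\tfrac{b}{2\pi}\int_0^{2\pi}\!\int_{R_1(\eta)}^{R_2(\eta)}\partial_{x_1^1}\!\big(\tfrac{u}{u^2+v^2}\big)\rho\,d\rho\,d\eta$, where $u=x_1^1-\rho\cos\eta$ and $v=\rho\sin\eta$; since $\partial_u\tfrac{u}{u^2+v^2}=\tfrac{v^2-u^2}{(u^2+v^2)^2}$, this is exactly $II_3$. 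Assembling the $(r_1,r_2)$-contributions into $I=(I_1,I_2,I_3)^T$ and the $x_1^1$-contributions into $II=(II_1,II_2,II_3)^T$ gives the claimed formula.

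\textbf{The main obstacle.} The only step beyond formal manipulation is justifying differentiation under the integral sign and the exchange of $\partial_\theta$ with $\tfrac{d}{ds}$: the kernel $N$ is logarithmically singular and $\nabla N$ is of size $|x|^{-1}$, so these interchanges are not automatic. After the two-dimensional area integration $\rho\,d\rho\,d\eta$, however, the singularities are integrable, and the uniform bounds and continuity needed are precisely those packaged in Proposition \ref{Regularity of F1 F2 G} (together with the regularity results of \cite{Hmidi-2024-Lake} and \cite{ZGQ Nonlinear Methods} invoked there, and Lemma \ref{symmetry property of F1 F2 G}, which guarantees that the image lies in $Y^\alpha\times Y^\alpha\times\R$). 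Granting that regularity, the computation of $d_{(r_1,r_2,x_1^1)}(F_1,F_2,G)$ reduces to the careful but routine chain-rule and Leibniz-rule bookkeeping described above.
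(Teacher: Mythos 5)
Your proposal is correct and follows essentially the same route as the paper: both rely on the $C^1$ regularity from Proposition \ref{Regularity of F1 F2 G} to reduce the Fr\'echet differential to a term-by-term Gateaux computation, using the chain rule with $dR_l/ds=h_l/R_l$ (and the resulting cancellation of $R_l(\eta)$ against the area element) together with the Leibniz rule on the $\rho$-limits and the quotient rule for the $x_1^1$-derivatives. The paper merely organizes the same bookkeeping as a sum of partial differentials $d_{r_l}F_j$, $d_{x_1^1}F_l$, $d_{r_l}G$, $d_{x_1^1}G$ rather than as a single one-parameter derivative.
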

        \begin{proof}
            Due to the representation \eqref{Def of (F_1,F_2,G)} and regularities proved in Proposition \ref{Regularity of F1 F2 G}, for $[h_1,h_2,b]\in X^\alpha\times X^\alpha\times \R$, we can directly compute the derivatives of each term
            \begin{align}
                d_{(r_1,r_2,x_1^1)}(F_1,F_2,G)(\Om,r_1,r_2,x_1^1)[h_1,h_2,b]&=\begin{pmatrix}
                                                                            d_{r_1}F_1[h_1]+d_{r_2}F_1[h_2]+d_{x_1}F_1[b]\vspace{0.25em}\\
                                                                            d_{r_1}F_2[h_1]+d_{r_2}F_2[h_2]+d_{x_1}F_2[b]\vspace{0.25em}\\
                                                                            d_{r_1}G[h_1]+d_{r_2}G[h_2]+d_{x_1}G[b]
                                                                        \end{pmatrix},
            \end{align}
            where, for $l,j=1,2$,
            \begin{equation*}
                \begin{split}
                    d_{r_l}F_l[h_l]=&\Om h_l'+\partial_\theta\left\{(3-2l)\int_{0}^{2\pi}N(R_l(\theta)e^{i\theta}-R_l(\eta)e^{i\eta})h_l(\eta)\,d\eta\right.\\
                    &+\left.\frac{h_l(\theta)e^{i\theta}}{R_l(\theta)}\cdot\left[\nabla N(R_l(\theta)e^{i\theta}-(x_1^1,0))+\int_{0}^{2\pi}\int_{R_1(\eta)}^{R_2(\eta)}\nabla N(R_l(\theta)e^{i\theta}-\rho e^{i\eta})\rho\right]\right\},\vspace{0.25em}\\
                    d_{x_1^1}G[b]=&\Om b-\frac{b}{2\pi}\int_0^{2\pi}\int_{R_1(\eta)}^{R_2(\eta)}\frac{(\rho\sin\eta)^2-(x_1^1-\rho\cos\eta)^2}{|(x_1^1,0)-\rho e^{i\eta}|^4}\rho\,d\rho\,d\eta
                \end{split}
            \end{equation*}
            and 
            \begin{equation*}
                \begin{split}
                    &d_{r_l}F_j[h_l]=(3-2j)\partial_\theta\left(\int_{0}^{2\pi}N(R_j(\theta)e^{i\theta}-R_l(\eta)e^{i\eta})h_l(\eta)\,d\eta\right),\quad {\rm for}\ j\neq l,\\
                    &d_{x}F_l[b]=\frac{b}{2\pi}\left\{\frac{\partial_\theta\left(R_l(\theta)\cos\theta\right)}{|R_l(\theta)e^{i\theta}-(x_1^1,0)|^2}-\frac{(R_l(\theta)\cos\theta-x_1^1)\partial_\theta|R_l(\theta)e^{i\theta}-(x_1^1,0)|^2}{|R_l(\theta)e^{i\theta}-(x_1^1,0)|^4}\right\},\\
                    &d_{r_l}G[h_l]=(2l-3)\int_{0}^{2\pi}\frac{x_1^1-R_l(\eta)\cos\eta}{(x_1^1-R_l(\eta)\cos\eta)^2+(R_l(\eta)\sin\eta)^2}h_l(\eta)\,d\eta.
                \end{split}
            \end{equation*}

            Combining above calculations, we can obtain the statements we desired.
        \end{proof}
        The following lemma establishes several important properties of the linearized operator $d_{(r_1,r_2,x_1^1)}(F_1,F_2,G)$ at the equilibrium.
        \begin{lemma}\label{differential at 0,0,0}
            Let $0<a_1<a_2$, $\Om\in\R$ and $\alpha\in(0,1)$. Fix $\varepsilon>0$ as in Lemma \ref{Regularity of F1 F2 G}. If moreover,
            \begin{equation}\label{Om not equal set}
                \Om\notin \left\{\frac{a_2^2-a_1^2}{2a_2^2}+\frac{1}{2\pi a_2^2},\ \frac{1}{2\pi a_1^2},\ 0\right\},
            \end{equation}
            then, at the equilibrium, the linearized operator $d_{(r_1,r_2,x_1^1)}(F_1,F_2,G)(\Om,0,0,0)$ is of Fredholm type operator with zero index. Furthermore, for any $b\in\R$, $h_1,h_2\in X^\alpha$ taking the form
            \begin{equation}
                h_1(\theta)=\sum_{n=1}^\infty h_n^{(1)}\cos(n\theta),\quad h_2(\theta)=\sum_{n=1}^\infty h_n^{(2)}\cos(n\theta),
            \end{equation}
            for all $\theta\in\T$ and some scalars $h_n^{(1)},h_n^{(2)}\in\R$, it holds that
            \begin{equation}\label{ker d F_1 F_2 G at 0,0,0}
                \begin{split}
                    &d_{(r_1,r_2,x_1^1)}(F_1,F_2,G)(\Om,0,0,0)[h_1,h_2,b]\\
                    &=
                    \begin{pmatrix}
                            \sum_{n=1}nM_{n}(\Om,a_1,a_2)\begin{pmatrix}
                                h_n^{(1)}\\
                                h_n^{(2)}
                            \end{pmatrix}\sin(n\theta)-
                            \begin{pmatrix}
                                \frac{b\sin\theta }{2\pi a_1}\\
                                \frac{b\sin\theta }{2\pi a_2}
                            \end{pmatrix}\vspace{0.25em}\\
                            \frac{h_1^{(2)}}{2a_2}-\frac{h_1^{(1)}}{2a_1}+\Om b
                    \end{pmatrix},
                \end{split}
            \end{equation}
            where
            \begin{equation}\label{def of Mn}
                M_{n}(\Om,a_1,a_2)=
                \begin{pmatrix}
                    -\Om+\frac{1}{2n}+\frac{1}{2\pi a_1^2}&-\frac{1}{2n}(\frac{a_1}{a_2})^{n}\vspace{0.25em}\\
                    \frac{1}{2n}(\frac{a_1}{a_2})^{n}&-\Om+\frac{a_2^2-a_1^2}{2a_2^2}-\frac{1}{2n}+\frac{1}{2\pi a_2^2}.
                \end{pmatrix}.
            \end{equation}
        \end{lemma}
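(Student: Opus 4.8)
The plan is to substitute the trivial profile $(r_1,r_2,x_1^1)=(0,0,0)$, that is $R_1\equiv a_1$, $R_2\equiv a_2$, $x_1^1=0$, into the differential formula of Proposition \ref{differential at any point}, evaluate every integral explicitly so as to read off \eqref{ker d F_1 F_2 G at 0,0,0}, and then deduce the Fredholm property directly from that matrix representation. I would begin with the pieces that do not require the kernel identities. Since $|R_l(\theta)e^{i\theta}|^2=a_l^2$ is independent of $\theta$, one has $\partial_\theta|R_l(\theta)e^{i\theta}-(x_1^1,0)|^2=0$, so $II_1$ and $II_2$ reduce to $-\tfrac{b\sin\theta}{2\pi a_1}$ and $-\tfrac{b\sin\theta}{2\pi a_2}$; in $II_3$ the integrand becomes $-\cos(2\eta)/\rho^2$, whose $\eta$-average vanishes, so $II_3=\Om b$; and in $I_3$, expanding $h_l=\sum_n h_n^{(l)}\cos(n\theta)$ and using $\int_0^{2\pi}\cos\eta\cos(n\eta)\,d\eta=\pi\delta_{n1}$ gives $I_3=\tfrac{h_1^{(2)}}{2a_2}-\tfrac{h_1^{(1)}}{2a_1}$. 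Together these already furnish the last scalar line of \eqref{ker d F_1 F_2 G at 0,0,0}.

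The heart of the argument is the evaluation of $I_1$ and $I_2$. The terms involving $\nabla N(a_le^{i\theta}-\rho e^{i\eta})\cdot e^{i\theta}$ I would compute via the elementary identity $\int_0^{2\pi}\frac{d\mu}{a_l-\rho e^{i\mu}}=\frac{2\pi}{a_l}\,\mathbf 1_{\{\rho<a_l\}}$ (expand the integrand in a geometric series in $e^{\pm i\mu}$): because $\rho\in(a_1,a_2)$ on the patch, this kills the $l=1$ area integral altogether but turns the $l=2$ one into $-\tfrac{a_2^2-a_1^2}{2a_2^2}\,h_2(\theta)$, a genuine asymmetry between the inner and outer boundaries. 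The boundary-point terms $\nabla N(a_le^{i\theta})\cdot e^{i\theta}\tfrac{h_l(\theta)}{a_l}$ equal $-\tfrac{h_l(\theta)}{2\pi a_l^2}$. For the logarithmic self- and cross-interaction terms $\int_0^{2\pi}N(a_ie^{i\theta}-a_je^{i\eta})h_j(\eta)\,d\eta$, I would write $|a_ie^{i\theta}-a_je^{i\eta}|$ as $a_{\max}|1-\tfrac{a_{\min}}{a_{\max}}e^{i(\theta-\eta)}|$ when $i\neq j$ and as $a_i|1-e^{i(\theta-\eta)}|$ when $i=j$, substitute $\mu=\theta-\eta$, discard the contribution odd in $\mu$, and invoke Lemma \ref{identity-ln} with $x=a_1/a_2$ (resp. $x=1$), obtaining $\sum_n h_n^{(j)}\tfrac{(a_1/a_2)^n}{2n}\cos(n\theta)$ (resp. $\sum_n h_n^{(i)}\tfrac{1}{2n}\cos(n\theta)$). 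Differentiating in $\theta$, recombining with $\Om h_l'$, $-\tfrac{h_l'}{2\pi a_l^2}$, $-\tfrac{a_2^2-a_1^2}{2a_2^2}h_2'$ and matching Fourier mode by Fourier mode, the sum should collapse to exactly $\sum_{n}nM_n(\Om,a_1,a_2)(h_n^{(1)},h_n^{(2)})^{T}\sin(n\theta)$ with $M_n$ as in \eqref{def of Mn}; adding $II_1,II_2$ then gives the first two lines of \eqref{ker d F_1 F_2 G at 0,0,0}.

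For the Fredholm statement I would read off from \eqref{ker d F_1 F_2 G at 0,0,0} that $\mathcal L:=d_{(r_1,r_2,x_1^1)}(F_1,F_2,G)(\Om,0,0,0)$ is block-diagonal with respect to the Fourier decomposition: one $3\times3$ block coupling $(h_1^{(1)},h_1^{(2)},b)$ and, for $n\ge2$, $2\times2$ blocks acting on $(h_n^{(1)},h_n^{(2)})$ by multiplication by $nM_n$. Splitting $nM_n$ as $n\,\diag(d_1,d_2)$ plus a bounded matrix whose diagonal entries are the fixed constants $\pm\tfrac12$ and whose off-diagonal entries are $\mp\tfrac12(\tfrac{a_1}{a_2})^n$, with $d_1=-\Om+\tfrac1{2\pi a_1^2}$ and $d_2=-\Om+\tfrac{a_2^2-a_1^2}{2a_2^2}+\tfrac1{2\pi a_2^2}$, I would write $\mathcal L=\mathcal L_0+\mathcal K$, where $\mathcal L_0$ is the diagonal leading part (a nonzero constant times $\partial_\theta$ in each function slot, and $\Om\,\id$ on $\R$) and $\mathcal K$ collects the rest. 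Since $\partial_\theta:X^\alpha\to Y^\alpha$ is an isomorphism (it multiplies the $n$-th cosine coefficient by $-n$ and has a bounded inverse on the Hölder scale) and the exclusion \eqref{Om not equal set} guarantees $d_1\neq0$, $d_2\neq0$, $\Om\neq0$, the operator $\mathcal L_0$ is an isomorphism. On the other hand $\mathcal K$ multiplies the $n$-th mode by a uniformly bounded matrix: the $\pm\tfrac12$ part is, up to sign, the conjugate-function operator, which is bounded on $C^{1+\alpha}(\T)$; the off-diagonal part has a geometrically decaying symbol and is smoothing; and the remaining $b$-couplings are finite rank. Hence $\mathcal K$ maps $X^\alpha\times X^\alpha\times\R$ boundedly into $C^{1+\alpha}(\T)\times C^{1+\alpha}(\T)\times\R$, which embeds compactly into $Y^\alpha\times Y^\alpha\times\R$, so $\mathcal K$ is compact and $\mathcal L=\mathcal L_0+\mathcal K$ is Fredholm of index zero.

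The step I expect to be the main obstacle is the explicit reduction in the second paragraph: one must track carefully the sign changes produced by $\partial_\theta$, by the substitution $\mu=\theta-\eta$, and by the two regimes $x=1$ versus $x=a_1/a_2$ of Lemma \ref{identity-ln} (the $\min\{x^n,x^{-n}\}$), and verify that all the pieces reassemble into precisely the matrix $M_n$ of \eqref{def of Mn} rather than a sign- or index-shifted variant. A secondary, more routine point is justifying the compactness of $\mathcal K$, which rests on the standard boundedness of the conjugate-function operator on $C^{1+\alpha}(\T)$.
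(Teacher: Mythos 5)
Your proposal is correct and follows essentially the same route as the paper: substitute the trivial profile into the formula of Proposition \ref{differential at any point}, evaluate the area term (which vanishes for $l=1$ and yields $-\tfrac{a_2^2-a_1^2}{2a_2^2}h_2$ for $l=2$), use Lemma \ref{identity-ln} for the boundary--boundary logarithmic interactions to assemble $M_n$, and then exhibit the operator as an invertible diagonal leading part plus a compact remainder (Hilbert-transform piece bounded on $C^{1+\alpha}$ composed with the compact embedding into $C^\alpha$, a smoothing off-diagonal piece with geometrically decaying symbol, and finite-rank $b$-couplings). The only cosmetic differences are that the paper cites Lemma 2.2 of \cite{Hmidi-2024-Lake} for the area term where you derive it from the geometric-series identity, and that the paper splits your single compact perturbation $\mathcal K$ into two pieces $\mathcal H$ and $\mathcal R$.
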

        \begin{proof}
            Taking $r_1=r_2=0,\ x_1^1=0$ in Proposition \ref{differential at any point}, we get that 
            \begin{equation*}
                \begin{split}
                    I_3=\frac{h_1^{(2)}}{2a_2}-\frac{h_1^{(1)}}{2a_1}\quad II_1=-\frac{b\sin\theta}{2\pi a_1},\quad II_2=-\frac{b\sin\theta}{2\pi a_2},\quad II_3=\Om b,
                \end{split}
            \end{equation*}
            and by Lemma 2.2 in \cite{Hmidi-2024-Lake} with $b\equiv1$, we have
            \begin{equation*}
                \begin{split}
                    \frac{h_1(\theta)}{a_1}\int_{0}^{2\pi}\int_{a_1}^{a_2}e^{i\theta}\cdot\nabla\ln|a_1e^{i\theta}-\rho e^{i\eta}|\rho &=-\frac{h_1(\theta)}{a_1^2}\left(\int_{0}^{\min\{a_1,a_2\}}-\int_{0}^{\min\{a_1,a_1\}}\right)\tau\,d\tau\\
                    &=0,
                \end{split}
            \end{equation*}
            and 
            \begin{equation*}
                \begin{split}
                    \frac{h_2(\theta)}{a_2}\int_0^{2\pi}\int_{a_1}^{a_2}e^{i\theta}\cdot\nabla \ln|a_2e^{i\theta}-\rho e^{i\eta}|\rho&=-\frac{h_2(\theta)}{a_2^2}\left(\int_{0}^{\min\{a_2,a_2\}}-\int_{0}^{\min\{a_1,a_2\}}\right)\tau\,d\tau\\
                    &=-\frac{a_2^2-a_1^2}{2a_2^2}h_2(\theta).
                \end{split}
            \end{equation*}
            Furthermore, by the expression of $h_1$ and $h_2$, we obtain  
            \begin{equation*}
                \begin{split}
                    \int_0^{2\pi}N(a_je^{i\theta}-a_le^{i\eta})h_l(\eta)&=\sum_n\int_0^{2\pi}N(a_j-a_le^{i(\eta-\theta)})h_n^{(l)}\cos(n\eta)\,d\eta\\
                    &=\sum_n\int_0^{2\pi}N(a_j-a_le^{i\eta})h_n^{(l)}\cos(n(\eta+\theta))\\
                    &=\sum_n\left(\int_0^{2\pi}N(a_j-a_l e^{i\eta})\cos(n\eta)\,d\eta\right)h_n^{(l)}\cos(n\theta)\\
                    &=\sum_n\frac{1}{2n}\min\left\{\left(\frac{a_j}{a_l}\right)^n,\left(\frac{a_j}{a_l}\right)^{-n}\right\}h_n^{(l)}\cos(n\theta),
                \end{split}
            \end{equation*}
            for $j,l=1,2$ and here the last equality is due to the Lemma \ref{identity-ln}. 
            Thus we find that
            \begin{equation*}
                \begin{pmatrix}
                    I_1\\
                    I_2
                \end{pmatrix}=\sum_n n M_n(\Om,a_1,a_2)
                \begin{pmatrix}
                    h_n^{(1)}\\
                    h_n^{(2)}
                \end{pmatrix}\sin(n\theta),
            \end{equation*}
            where
            \begin{align*}
                M_{n}(\Om,a_1,a_2)
                =\begin{pmatrix}
                    -\Om+\frac{1}{2n}+\frac{1}{2\pi a_1^2}&-\frac{1}{2n}(\frac{a_1}{a_2})^{n}\vspace{0.25em}\\
                    \frac{1}{2n}(\frac{a_1}{a_2})^{n}&-\Om+\frac{a_2^2-a_1^2}{2a_2^2}-\frac{1}{2n}+\frac{1}{2\pi a_2^2}
                \end{pmatrix}.
            \end{align*}  

            Next we prove that the linearized operator $d_{(r_1,r_2,x_1^1)}(F_1,F_2,G)$ at the equilibrium is a Fredholm type operator with zero index.
            To this end, we make the following decomposition:
            \begin{equation*}
                d_{(r_1,r_2,x_1^1)}(F_1,F_2,G)(\Om,0,0,0)=\mathcal{L}+\mathcal{H}+\mathcal{R},
            \end{equation*}
            where 
            \begin{equation*}
                \mathcal{L}:=
                \begin{pmatrix}
                    (\Om-\frac{1}{2\pi a_1^2})\partial_\theta&0&0\\
                    0&(\Om-\frac{1}{2\pi a_2^2}-\frac{a_2^2-a_1^2}{2a_2^2})\partial_\theta&0\\
                    0&0&\Om
                \end{pmatrix},\quad
                \mathcal{H}:=
                \begin{pmatrix}
                    -\frac{1}{2}\mathbb{H}&0&0\\
                    0&\frac{1}{2}\mathbb{H}&0\\
                    0&0&0
                \end{pmatrix}
            \end{equation*}
            and 
            \begin{equation*}
                \begin{split}
                    \mathcal{R}[h_1,h_2,b]=
                    \begin{pmatrix}
                        0&\frac{a_1\sin\theta}{2a_2}&-\frac{\sin\theta}{2\pi a_1}\vspace{0.25em}\\
                        -\frac{a_1\sin\theta}{2a_2}&0&-\frac{\sin\theta}{2\pi a_2}\vspace{0.25em}\\
                        -\frac{1}{2a_1}&\frac{1}{2a_2}&0
                    \end{pmatrix}
                    \begin{pmatrix}
                        h_1^{(1)}\vspace{0.25em}\\
                        h_1^{(2)}\vspace{0.25em}\\
                        b
                    \end{pmatrix}
                    +
                    \sum_{n=2}^\infty 
                    \begin{pmatrix}
                        \frac{1}{2}\left(\frac{a_1}{a_2}\right)^n h_n^{(2)}\sin(n\theta)\vspace{0.25em}\\
                        -\frac{1}{2}\left(\frac{a_1}{a_2}\right)^nh_n^{(1)}\sin(n\theta)\vspace{0.25em}\\
                        0
                    \end{pmatrix},
                \end{split}
            \end{equation*}
            where $\mathbb{H}$ is the classical $2\pi$-period Hilbert transform defined by
            \begin{equation*}
                \mathbb{H}f(\theta):=\frac{1}{2\pi}\int_0^{2\pi}f(\eta)\cot(\frac{\theta-\eta}{2})\,d\eta
            \end{equation*}
            and we emphasize that for all $n\in \N^*$,
            \begin{equation*}
                \mathbb{H}\cos(n\theta)=\sin(n\theta).
            \end{equation*}

            Moreover, it is readily seen that if $\Om$ satisfies \eqref{Om not equal set}, then the operator $\mathcal{L}:X^\alpha\times X^\alpha\times \R\to Y^\alpha\times Y^\alpha\times \R$ is a Fredholm operator with zero index. 
            
            As for the operator $\mathcal{H}$, it is proved in \cite{Hilbert transform compact} (Proposition 2.1-(iii)) that the Hilbert transform $\mathbb{H}$ as a mapping form $X^\alpha$ into $Y^\alpha$ is a compact operator. Consequently, the operator $\mathcal{H}:X^\alpha\times X^\alpha\times \R\to Y^\alpha\times Y^\alpha\times \R$ is also compact. 
            
            Particularly, the decay of coefficients $(\frac{a_1}{a_2})^n=O(\frac{1}{n^2})$ implies that the $\mathcal{R}$ maps $X^\alpha\times X^\alpha\times \R$ into $Y^{2+\alpha}\times Y^{2+\alpha}\times\R$. Hence, due to the compact embedding $C^{2+\alpha}\hookrightarrow C^\alpha$, we deduce that $\mathcal{R}:X^\alpha\times X^\alpha\times \R\to Y^{\alpha}\times Y^\alpha\times\R$ is a compact operator. 

            In total, under the condition \eqref{Om not equal set}, the linearized operator $d_{(r_1,r_2,x_1^1)}(F_1,F_2,G)(\Om,0,0,0)$ is a compact perturbation of a Fredholm operator with zero index, thus is a Fredholm operator with zero index, see Corollary 5.9 in \cite{Spectral theory}. 
            
            The proof of the lemma is therefore completed. 
        \end{proof}
        The next step is to show the one-dimensional kernel property of the linearized operator $d_{(r_1,r_2,x_1^1)}(F_1,F_2,G)$, which is an important assumption in Crandall-Rabinowitz's Theorem.
        \begin{proposition}\label{proposition of angular velocities is simple}
            For given $0<a_1<a_2$ and $\Om\neq 0$, there exists $N(a_1,a_2)\in\N^*$ such that, for any $n\geq N(a_1,a_2)$, there are two distinct angular velocities
            \begin{equation}\label{angular velocities}
                \Om_{n}^{\pm}=\left(\frac{a_2^2-a_1^2}{4a_2^2}+\frac{1}{4\pi a_2^2}+\frac{1}{4\pi a_1^2}\right)\pm\frac{1}{2}\sqrt{\left(\frac{a_2^2-a_1^2}{2a_2^2}+\frac{1}{2\pi a_2^2}-\frac{1}{2\pi a_1^2}-\frac{1}{n}\right)^2-\frac{1}{n^2}\left(\frac{a_1}{a_2}\right)^{2n}}
            \end{equation}
            for which the matrix $M_n(\Om_n^\pm,a_1,a_2)$ defined in \eqref{def of Mn}, is singular.

            Furthermore, the sequence $\{\Om_n^\pm\}_{n\geq N(a_1,a_2)}$ are strictly monotone satisfying \eqref{Om not equal set} and 
            \begin{equation*}
                \lim_{n\to\infty}\Om_n^\pm\in\left\{\frac{1}{2\pi a_1^2}\ ,\ \frac{1}{2\pi a_2^2}+\frac{a_2^2-a_1^2}{2a_2^2}\right\}
            \end{equation*}
            and for all $n,n'\geq N(a_1,a_2)$,
            \begin{equation*}
                \Om_n^-\neq \Om_{n'}^+\ .
            \end{equation*}
        \end{proposition}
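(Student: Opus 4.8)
The plan is to regard $\det M_n(\Om,a_1,a_2)$ as a quadratic polynomial in $\Om$ and to read off every assertion from its coefficients and roots. Writing the diagonal entries of the matrix $M_n$ in \eqref{def of Mn} as $-\Om+A_n$ and $-\Om+B_n$ with
\[
A_n:=\frac{1}{2n}+\frac{1}{2\pi a_1^2},\qquad B_n:=\frac{a_2^2-a_1^2}{2a_2^2}-\frac{1}{2n}+\frac{1}{2\pi a_2^2},
\]
and the off-diagonal entries as $\mp C_n$ with $C_n:=\frac{1}{2n}(a_1/a_2)^n$, one computes $\det M_n(\Om)=(\Om-A_n)(\Om-B_n)+C_n^2$, whose roots are $\Om_n^\pm=\tfrac12(A_n+B_n)\pm\tfrac12\sqrt{\Delta_n}$ with $\Delta_n:=(A_n-B_n)^2-4C_n^2$. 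A short simplification — the $\pm\frac{1}{2n}$ terms cancel in $A_n+B_n$ and add up to $\frac1n$ in $A_n-B_n$ — recasts this as exactly \eqref{angular velocities}, so that $M_n(\Om_n^\pm,a_1,a_2)$ is singular by construction. It is convenient to abbreviate $P:=\frac{a_2^2-a_1^2}{2a_2^2}+\frac{1}{2\pi a_2^2}$, $Q:=\frac{1}{2\pi a_1^2}$, $\beta:=P-Q$ and $q:=a_1/a_2\in(0,1)$, so that $\tfrac12(A_n+B_n)=\tfrac12(P+Q)$ is \emph{independent of $n$} and $\Delta_n=(\beta-\frac1n)^2-\frac{q^{2n}}{n^2}$.

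For the existence of two distinct real roots I would split according to whether $\beta=0$: if $\beta\neq0$ then $(\beta-\frac1n)^2\to\beta^2>0$ while $q^{2n}/n^2$ decays geometrically, so $\Delta_n>0$ for all $n$ past some threshold; if $\beta=0$ then $\Delta_n=(1-q^{2n})/n^2>0$ for every $n\geq1$. This produces the two distinct angular velocities for $n\geq N(a_1,a_2)$. The stated limits are then immediate: since $\tfrac12(A_n+B_n)$ does not depend on $n$ and $\sqrt{\Delta_n}\to|\beta|=|P-Q|$, we get $\Om_n^\pm\to\tfrac12(P+Q)\pm\tfrac12|P-Q|=\max/\min\{P,Q\}$, which is \eqref{alimit of Om plus}--\eqref{alimit of Om minus}, and in particular lies in $\{Q,\ P\}$.

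To verify \eqref{Om not equal set} I would evaluate $\det M_n$ at the three forbidden values rather than manipulate the roots: using $A_n=Q+\frac1{2n}$ and $B_n=P-\frac1{2n}$ gives $\det M_n(P)=\det M_n(Q)=\frac{P-Q}{2n}-\frac{1-q^{2n}}{4n^2}$ and $\det M_n(0)=A_nB_n+C_n^2\to PQ>0$; when $\beta\neq0$ the order-$n^{-1}$ term dominates, and when $\beta=0$ the common value is $-\frac{1-q^{2n}}{4n^2}<0$, so all three determinants are nonzero for $n$ large and hence $\Om_n^\pm\notin\{P,Q,0\}$. For strict monotonicity, because $\tfrac12(A_n+B_n)$ is constant, $\{\Om_n^+\}$ is strictly monotone exactly when $\{\Delta_n\}$ is and $\{\Om_n^-\}$ monotone in the opposite sense; I would obtain eventual strict monotonicity of $\Delta_n$ by differentiating $x\mapsto(\beta-\frac1x)^2-\frac{q^{2x}}{x^2}$, whose derivative is $\frac{2}{x^2}(\beta-\frac1x)$ plus an exponentially small term, hence eventually of the sign of $\beta$ (in the borderline case $\beta=0$ one checks directly that $x\mapsto(1-q^{2x})/x^2$ is eventually strictly decreasing). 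Finally $\Om_n^-=\Om_{n'}^+$ would force $\sqrt{\Delta_n}+\sqrt{\Delta_{n'}}=0$ — again by the $n$-independence of $\tfrac12(A_n+B_n)$ — which is impossible since both terms are positive for $n,n'\geq N(a_1,a_2)$; one then takes $N(a_1,a_2)$ to be the maximum of the finitely many thresholds encountered.

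The only genuinely delicate aspect is the sign analysis in $\Delta_n$, in $\det M_n$ at $\{P,Q,0\}$, and in the monotonicity derivative: throughout, one must control the competition between the exponentially small quantities $(a_1/a_2)^{2n}$ and the merely polynomially small ones $\frac1n,\frac1{n^2}$, and must isolate the degenerate case $\beta=\frac{a_2^2-a_1^2}{2a_2^2}+\frac{1}{2\pi a_2^2}-\frac{1}{2\pi a_1^2}=0$, where the leading polynomial terms vanish and one has to pass to the next order.
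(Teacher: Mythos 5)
Your proposal is correct and follows essentially the same route as the paper: view $\det M_n$ as a quadratic in $\Om$ with $n$-independent midpoint $\tfrac12(A_n+B_n)$, analyze the discriminant $\Delta_n=(\beta-\tfrac1n)^2-q^{2n}/n^2$ with the case split on whether $\beta=\tfrac12\bigl(1-\tfrac{1}{\pi a_1^2}\bigr)\bigl(1-\tfrac{a_1^2}{a_2^2}\bigr)$ vanishes, and read off the limits and the separation $\Om_n^-<\tfrac12(A_n+B_n)<\Om_{n'}^+$. Your only departures are tactical — verifying \eqref{Om not equal set} by evaluating $\det M_n$ at the three forbidden values instead of invoking monotone convergence to the limits, and getting strict monotonicity from the derivative of the continuous extension of $\Delta_n$ rather than from the discrete expansion $\Om_{n+1}^\pm-\Om_n^\pm=\mp\tfrac{1}{2n(n+1)}+O(n^{-3})$ — and both variants are sound.
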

        \begin{proof}
            Due to \eqref{ker d F_1 F_2 G at 0,0,0}, for $[h_1,h_2,b]\in {\rm ker}\left(d_{(r_1,r_2,x_1^1)}(F_1,F_2,G)(\Om,0,0,0)\right)$ and $\Om\neq 0$, then $b$ satisfies
            \begin{equation}\label{ker b}
                    b=\frac{h_1^{(1)}}{2a_1\Om}-\frac{h_1^{(2)}}{2a_2\Om}.
            \end{equation}    
            Plugging \eqref{ker b} into \eqref{ker d F_1 F_2 G at 0,0,0}, it implies that $[h_1,h_2]$ satisfies
            \begin{equation}
                \begin{split}
                    0=&\widetilde{M}_1(\Om,a_1,a_2)
                    \begin{pmatrix}
                        h_1^{(1)}\\
                        h_1^{(2)}
                    \end{pmatrix}\sin\theta+\sum_{n=2}^\infty n M_n(\Om,a_1,a_2)
                    \begin{pmatrix}
                        h_n^{(1)}\\
                        h_n^{(2)}
                    \end{pmatrix}\sin(n\theta),
                \end{split}
            \end{equation}
            where 
            \begin{equation}
                \widetilde{M}_1(\Om,a_1,a_2)=M_1(\Om,a_1,a_2)+\begin{pmatrix}
                    -\frac{1}{4\pi a_1^2\Om}&\frac{1}{4\pi a_1 a_2\Om}\\
                    -\frac{1}{4\pi a_1a_2\Om}&\frac{1}{4\pi a_2^2 \Om}
                \end{pmatrix},
            \end{equation}
            and $M_n(\Om,a_1,a_2)$ is defined in \eqref{def of Mn}. We can compute the determination of $\widetilde{M}_1$ and $M_n,$ for $n\geq 2.$ The first one is 
            \begin{equation*}
                \begin{split}
                    \det(\widetilde{M_1}(\Om,a_1,a_2))&=\Om^2-\left(\frac{a_2^2-a_1^2}{2a_2^2}+\frac{1}{2\pi a_2^2}+\frac{1}{2\pi a_1^2}\right)\Om+\frac{1}{4\pi^2 a_1^2a_2^2}+(\frac{1}{4\pi a_1^2}-\frac{1}{4\pi a_2^2})\\
                    &>\det(M_1(\Om,a_1,a_2)),
                \end{split}
            \end{equation*}
            which is exactly a second order polynomial. Thus we denote the two distinct roots (if exist) of $\det(\widetilde{M}_1(\Om,a_1,a_2))$ by $\Om_1^\pm$.
            
            Furthermore, we can directly compute the determination of $M_{n}$:
            \begin{align*}
                \det( M_{n}(\Om,a_1,a_2))&=\Om^2-\left(\frac{a_2^2-a_1^2}{2a_2^2}+\frac{1}{2\pi a_2^2}+\frac{1}{2\pi a_1^2}\right)\Om+\frac{1}{2n}\left(\frac{a_2^2-a_1^2}{2a_2^2}+\frac{1}{2\pi a_2^2}-\frac{1}{2\pi a_1^2}\right)\\
                &\quad +\frac{1}{4n^2}\left(\left(\frac{a_1}{a_2}\right)^{2n}-1\right)+\frac{1}{2\pi a_1^2}\left(\frac{a_2^2-a_1^2}{2a_2^2}+\frac{1}{2\pi a_2^2}\right),
            \end{align*}
            which is a second order polynomial as well. Then we also denote the two distinct roots (if exist) of $\det(M_n(\Om,a_1,a_2))$ by $\Om_n^\pm$.
            The discriminant of the preceding  second order polynomial is given by
            \begin{align}
                \Delta_{n}&=\left(\frac{a_2^2-a_1^2}{2a_2^2}+\frac{1}{2\pi a_2^2}-\frac{1}{2\pi a_1^2}-\frac{1}{n}\right)^2-\frac{1}{n^2}\left(\frac{a_1}{a_2}\right)^{2n}.
            \end{align} 
            
            Next, we focus on the existence of the solutions of $\det(M_n(\Om,a_1,a_2))=0$. 
            Notice that 
            \begin{equation*}
                \frac{a_2^2-a_1^2}{2a_2^2}+\frac{1}{2\pi a_2^2}-\frac{1}{2\pi a_1^2}=\frac{1}{2}\left(1-\frac{1}{\pi a_1^2}\right)\left(1-\frac{a_1^2}{a_2^2}\right).
            \end{equation*}
            Therefore, it is seen that if $\pi a_1^2=1$, then 
            \begin{equation}
                \Delta_n =\frac{1}{n^2}-\frac{1}{n^2}\left(\frac{a_1}{a_2}\right)^{2n}>0,\quad {\rm for\ all}\ n\geq 2,
            \end{equation}
            and if $\pi a_1^2\neq 1$, then 
            \begin{equation}
                \Delta_n  \to \left(\frac{a_2^2-a_1^2}{2a_2^2}+\frac{1}{2\pi a_2^2}-\frac{1}{2\pi a_1^2}\right)^2>0,\quad {\rm as}\ n\to\infty. 
            \end{equation}
            Thus, no matter $\pi a_1^2$ is equal to $1$ or not, there always exists $N_1(a_1,a_2)\in \mathbb{N}^*$ such that 
            \begin{align}
                \Delta_{n}>0, \quad {\rm for \ all}\ n\geq N_1(a_1,a_2).
            \end{align} 
            Therefore, the polynomial $\det (M_{n})$ admits two distinct roots $\Om_{n}^{\pm}$ being as given in the statement of the proposition and we write that 
            \begin{itemize}
                \item For $\pi a_1^2>1$, then $\frac{a_2^2-a_1^2}{2a_2^2}+\frac{1}{2\pi a_2^2}-\frac{1}{2n}>\frac{1}{2\pi a_1^2}+\frac{1}{2n}$, for $n\geq N_2(a_1,a_2)$, and 
                \begin{equation}
                    \begin{split}
                        \Om_{n}^+&=\left(\frac{a_2^2-a_1^2}{4a_2^2}+\frac{1}{4\pi a_2^2}-\frac{1}{2n}\right)+r_{n},\\
                        \Om_{n}^-&=\left(\frac{1}{4\pi a_1^2}+\frac{1}{2n}\right)-r_{n}.
                    \end{split}
                \end{equation}
                \item For $\pi a_1^2\leq 1$, then $\frac{a_2^2-a_1^2}{2a_2^2}+\frac{1}{2\pi a_2^2}<\frac{1}{2\pi a_1^2}$  and for all $n\geq N_1(a_1,a_2)$, we have
                    \begin{equation}
                        \begin{split}
                            \Om_{n}^+&=\left(\frac{1}{4\pi a_1^2}+\frac{1}{2n}\right)+r_{n},\\
                            \Om_{n}^-&=\left(\frac{a_2^2-a_1^2}{4a_2^2}+\frac{1}{4\pi a_2^2}-\frac{1}{2n}\right)-r_{n},
                        \end{split}
                    \end{equation}
            \end{itemize}
            where we set 
            \begin{equation}
                \begin{split}
                    r_{n}&=\frac{1}{2}\left|\frac{a_2^2-a_1^2}{2a_2^2}+\frac{1}{2\pi a_2^2}-\frac{1}{2\pi a_1^2}-\frac{1}{n}\right|\\
                    &\quad \times \left(\sqrt{1-\frac{\frac{1}{n^2}(a_1/a_2)^{2n}}{\left(\frac{a_2^2-a_1^2}{2a_2^2}+\frac{1}{2\pi a_2^2}-\frac{1}{2\pi a_1^2}-\frac{1}{n}\right)^2}}-1\right)  .         
                \end{split}
            \end{equation}
            Moreover, it is not difficult to see that
            \begin{align}
                r_{n}=O(\frac{1}{n^3}),
            \end{align}
            consequently, 
            \begin{itemize}
                \item For $\pi a_1^2>1$, then 
                    \begin{equation}
                        \begin{split}
                            \Om_{n+1}^+-\Om_{n}^+&=\frac{1}{2(n+1)n}+O(\frac{1}{n^3}),\\
                            \Om_{n+1}^--\Om_{n}^-&=-\frac{1}{2(n+1)n}+O(\frac{1}{n^3}).
                        \end{split}
                    \end{equation}
                \item For $\pi a_1^2\leq 1$, then 
                    \begin{equation}
                        \begin{split}
                            \Om_{n+1}^+-\Om_{n}^+&=-\frac{1}{2(n+1)n}+O(\frac{1}{n^3}),\\
                            \Om_{n+1}^--\Om_{n}^-&=\frac{1}{2(n+1)n}+O(\frac{1}{n^3}).
                        \end{split}
                    \end{equation}
            \end{itemize}

            All in all, we deduce that the sequence $\{\Om_{n}^{\pm}\}$ is strictly monotone for $n\geq N_3(a_1,a_2)$. 
            Obviously, we have $\Om_n^-< \frac{a_2^2-a_1^2}{4a_2^2}+\frac{1}{4\pi a_2^2}+\frac{1}{4\pi a_1^2}< \Om_n^+$, for all $n\geq N_3(a_1,a_2)$. 
            Furthermore, there holds
            \begin{equation}\label{limit of Om plus}
                \begin{split}
                    \lim_{n\to \infty}\Om_{n}^+&=\left(\frac{a_2^2-a_1^2}{4a_2^2}+\frac{1}{4\pi a_2^2}+\frac{1}{4\pi a_1^2}\right)+\frac{1}{2}\left|\frac{a_2^2-a_1^2}{2a_2^2}+\frac{1}{2\pi a_2^2}-\frac{1}{2\pi a_1^2}\right|\vspace{0.25em}\\    
                    &=
                    \begin{cases}
                        \frac{a_2^2-a_1^2}{2a_2^2}+\frac{1}{2\pi a_2^2}~,&{\rm if}\ \pi a_1^2>1,\vspace{0.25em}\\
                        \frac{1}{2\pi a_1^2}~,& {\rm if}\ \pi a_1^2\leq 1,
                    \end{cases}
                \end{split}
            \end{equation}
            and
            \begin{equation}\label{limit of Om minus}
                \begin{split}
                    \lim_{n\to\infty}\Om_{n}^-&=\left(\frac{a_2^2-a_1^2}{4a_2^2}+\frac{1}{4\pi a_2^2}+\frac{1}{4\pi a_1^2}\right)-\frac{1}{2}\left|\frac{a_2^2-a_1^2}{2a_2^2}+\frac{1}{2\pi a_2^2}-\frac{1}{2\pi a_1^2}\right|\vspace{0.25em}\\   
                    &=
                    \begin{cases}
                        \frac{1}{2\pi a_1^2}~,& {\rm if}\ \pi a_1^2>1,\vspace{0.25em}\\
                        \frac{a_2^2-a_1^2}{2a_2^2}+\frac{1}{2\pi a_2^2}~,&{\rm if}\ \pi a_1^2\leq 1.
                    \end{cases}
                \end{split}
            \end{equation}
            It implies that $\{\Om_n^\pm\}_{n\geq N_3(a_1,a_2)}$ satisfies the assumption \eqref{Om not equal set} in Lemma \ref{differential at 0,0,0}.

            Finally, we claim that there is no spectral collisions for $n$ large enough.

            In fact, $\{\Om_n^{\pm}\}_{n\geq N_3(a_1,a_2)}$ are all distinct from each other. As for $\{\Om_n^\pm\}_{n<N_3(a_1,a_2)}$, we set
            \begin{equation*}
                \delta:=\min\left\{\left|\Om_j^\pm-\frac{a_2^2-a_1^2}{2a_2^2}-\frac{1}{2\pi a_2^2}\right|,\ \left|\Om_j^\pm-\frac{1}{2\pi a_1^2}\right|:\ \Om_j^\pm\ {\rm satisfies}\ \eqref{Om not equal set},\ j<N_3(a_1,a_2)\right\}. 
            \end{equation*}
            Combining the strictly monotonic properties of $\{\Om_n^\pm\}_{n\geq N_3(a_1,a_2)}$ and \eqref{limit of Om plus}, \eqref{limit of Om minus}, there exists $N_4(a_1,a_2)\geq N_3(a_1,a_2)$, such that 
            \begin{equation*}
                \max\left\{\left|\Om_n^\pm-\frac{1}{2\pi a_1^2}\right|,\ \left|\Om_n^\pm-\frac{a_2^2-a_1^2}{2a_2^2}-\frac{1}{2\pi a_2^2}\right| ~:~n\geq N_4(a_1,a_2) \right\}\leq \frac{\delta}{2}.
            \end{equation*} 
            Thus for any $n\geq N_4(a_1,a_2)$, there holds $\Om_n^\pm\neq \Om_j^\pm$, for all $j\neq n$. The proof is therefore completed.
            
        \end{proof}

    \subsection{Proof of Theorem \ref{main theorem 2}}
        Finally, we check the last prerequisites that we need to apply Crandall-Rabinowitz's Theorem to, eventually, conclude the proof for Theorem \ref{main theorem 2}.
        \begin{proposition}
            Let $\alpha\in(0,1),\ 0<a_1<a_2$ and $N(a_1,a_2)\in \N^*$ be as in Proposition \ref{proposition of angular velocities is simple}. Then, for all $n\geq N(a_1,a_2)$ and $\Om_n^\pm$ defined in \eqref{angular velocities}, the kernel of the linearized operator 
            $$d_{(r_1,r_2,x_1^1)}(F_1,F_2,G)(\Om_{n}^\pm,0,0,0):X^\alpha\times X^\alpha\times\R\to Y^\alpha\times Y^\alpha\times\R$$
            is one-dimension and generated by $(r_0^1,r_0^2,0)$, where 
            \begin{equation}
                \begin{pmatrix}
                    r_0^1\\
                    r_0^2
                \end{pmatrix}
                :\theta\mapsto \begin{pmatrix}
                    \frac{1}{2n}(\frac{a_1}{a_2})^{n}\vspace{0.5em}\\
                    -\Om_{n}^{\pm}+\frac{1}{2n}+\frac{1}{2\pi a_1^2}
                \end{pmatrix}\cos(n\theta).
            \end{equation}
            
            Moreover, the range of linearized operator $d_{(r_1,r_2,x_1^1)}(F_1,F_2,G)(\Om_n^\pm,0,0,0)$ is closed and of co-dimension one.
            
            Furthermore, the transversality condition is satisfied, that is
            $$\partial_\Om d_{(r_1,r_2,x_1^1)}(F_1,F_2,G)(\Om_{n}^{\pm},0,0,0)[r_0^1,r_0^2,0]\notin \Ran \left(d_{(r_1,r_2,x_1^1)}(F_1,F_2,G)(\Om_{n}^\pm,0,0,0)\right).$$
        \end{proposition}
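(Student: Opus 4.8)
The plan is to verify, at the point $(\Om_n^\pm,0,0,0)$, the three remaining hypotheses of the Crandall--Rabinowitz theorem \ref{CR-Fir}: one-dimensionality of the kernel with the stated generator, closedness and co-dimension one of the range, and transversality. It is convenient to abbreviate the entries of the matrix \eqref{def of Mn} evaluated at $\Om=\Om_n^\pm$ by
\begin{equation*}
M_n(\Om_n^\pm)=\begin{pmatrix}A&-B\\ B&C\end{pmatrix},\quad A=-\Om_n^\pm+\tfrac{1}{2n}+\tfrac{1}{2\pi a_1^2},\quad B=\tfrac{1}{2n}(a_1/a_2)^n,\quad C=-\Om_n^\pm+\tfrac{a_2^2-a_1^2}{2a_2^2}-\tfrac{1}{2n}+\tfrac{1}{2\pi a_2^2}.
\end{equation*}
By the definition of $\Om_n^\pm$ one has $B>0$ and $\det M_n(\Om_n^\pm)=AC+B^2=0$; moreover $A\neq0$, since $A=0$ would force $B^2=-AC=0$. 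Hence $M_n(\Om_n^\pm)$ has rank exactly one, with $\ker M_n(\Om_n^\pm)=\Span\{(B,A)^T\}$ (read off the first row) and left null space $\Span\{(B,-A)^T\}$. Everything below reduces to the structural formula \eqref{ker d F_1 F_2 G at 0,0,0} together with the spectral information recorded in Proposition~\ref{proposition of angular velocities is simple}.

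\textbf{Kernel.} Let $[h_1,h_2,b]$ lie in the kernel and write $h_l(\theta)=\sum_{m\geq1}h_m^{(l)}\cos(m\theta)$. Since in \eqref{ker d F_1 F_2 G at 0,0,0} the parameter $b$ enters only through the $\sin\theta$ harmonic, the equations decouple across Fourier modes. As $\Om_n^\pm\neq0$ (it satisfies \eqref{Om not equal set} by Proposition~\ref{proposition of angular velocities is simple}), the scalar component gives $b=\tfrac{h_1^{(1)}}{2a_1\Om_n^\pm}-\tfrac{h_1^{(2)}}{2a_2\Om_n^\pm}$ as in \eqref{ker b}; inserting this, the $\sin\theta$-component of the first two equations becomes $\widetilde M_1(\Om_n^\pm)(h_1^{(1)},h_1^{(2)})^T=0$, while for each $m\geq2$ the $\sin(m\theta)$-component is $M_m(\Om_n^\pm)(h_m^{(1)},h_m^{(2)})^T=0$. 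The roots of $\det M_m$ are exactly $\{\Om_m^+,\Om_m^-\}$ and those of $\det\widetilde M_1$ are $\{\Om_1^+,\Om_1^-\}$; the no-spectral-collision and monotonicity statements of Proposition~\ref{proposition of angular velocities is simple}, together with the fact that $\Om_n^\pm$ avoids the set \eqref{Om not equal set}, guarantee $\det\widetilde M_1(\Om_n^\pm)\neq0$ and $\det M_m(\Om_n^\pm)\neq0$ for all $m\geq2$ with $m\neq n$ (enlarging $N(a_1,a_2)$ if necessary). Hence $h_m^{(l)}=0$ for all $m\neq n$, and then $h_1^{(l)}=0$, $b=0$, while $(h_n^{(1)},h_n^{(2)})^T\in\ker M_n(\Om_n^\pm)=\Span\{(B,A)^T\}$. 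Multiplying by $\cos(n\theta)$ this is precisely $(r_0^1,r_0^2)$, so the kernel is the one-dimensional line through $(r_0^1,r_0^2,0)$.

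\textbf{Range and transversality.} Since $\Om_n^\pm$ satisfies \eqref{Om not equal set}, Lemma~\ref{differential at 0,0,0} says $d_{(r_1,r_2,x_1^1)}(F_1,F_2,G)(\Om_n^\pm,0,0,0)$ is Fredholm of index zero; combined with $\dim\ker=1$ this forces its range to be closed and of co-dimension one, and moreover membership of $(g_1,g_2,c)$ in the range is detected, at frequency $n$, by orthogonality of the $\sin(n\theta)$-coefficient vector of $(g_1,g_2)$ to the left null vector $(B,-A)^T$ of $M_n(\Om_n^\pm)$. Differentiating \eqref{ker d F_1 F_2 G at 0,0,0} in $\Om$ and using $\partial_\Om M_n\equiv-\mathrm{Id}$ together with $b=0$, $h_1=0$ for the generator, gives
\begin{equation*}
\partial_\Om d_{(r_1,r_2,x_1^1)}(F_1,F_2,G)(\Om_n^\pm,0,0,0)[r_0^1,r_0^2,0]=\bigl(-n\,r_0^1\sin(n\theta),\,-n\,r_0^2\sin(n\theta),\,0\bigr)^T,
\end{equation*}
whose $\sin(n\theta)$-coefficient vector is $-n(B,A)^T$. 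Its pairing with $(B,-A)^T$ equals $-n(B^2-A^2)=nA(A+C)$, the last identity using $AC+B^2=0$. Now $A+C=\operatorname{tr}M_n(\Om_n^\pm)=(\Om_n^++\Om_n^-)-2\Om_n^\pm=\mp\sqrt{\Delta_n}\neq0$ because $\Delta_n>0$ for $n\geq N(a_1,a_2)$, while $A\neq0$ as observed above. Hence the pairing is nonzero, so the $\Om$-derivative does not lie in the range, which is the transversality condition, and the proof is complete.

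I expect the kernel step to be the crux: ruling out spurious kernel at frequency $1$ and at all frequencies $m\neq n$ is exactly the place where one must invoke the monotonicity and absence of spectral collisions from Proposition~\ref{proposition of angular velocities is simple}. Once that is settled, the closed co-dimension-one range follows at once from Fredholm theory, and transversality reduces to the elementary observation that $B^2\neq A^2$ for the singular $2\times2$ matrix $M_n(\Om_n^\pm)$.
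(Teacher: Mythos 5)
Your proof is correct and follows essentially the same route as the paper: it invokes the Fredholm property from Lemma \ref{differential at 0,0,0} and the spectral facts $\det M_n(\Om_n^\pm)=0$, $\det M_j(\Om_n^\pm)\neq 0$ for $j\neq n$ (and $\det\widetilde M_1(\Om_n^\pm)\neq0$) from Proposition \ref{proposition of angular velocities is simple} to identify the kernel and the co-dimension-one range, and then checks transversality by pairing against the left null vector of $M_n(\Om_n^\pm)$, which is exactly the paper's $y_0$. The only cosmetic difference is that you package the nonvanishing of the pairing as $nA(A+C)$ with $A+C=\mp\sqrt{\Delta_n}$, whereas the paper factors the same quantity as $-n\bigl(\Om_n^\pm-\tfrac{1}{2n}-\tfrac{1}{2\pi a_1^2}\bigr)\bigl(2\Om_n^\pm-\tfrac{a_2^2-a_1^2}{2a_2^2}-\tfrac{1}{2\pi a_1^2}-\tfrac{1}{2\pi a_2^2}\bigr)$; these are identical computations.
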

        \begin{proof}
            Due to the analysis given in Proposition \ref{proposition of angular velocities is simple}, we deduce for $n\geq N(a_1,a_2)$, $\Om_n^\pm$ satisfies \eqref{Om not equal set}, and 
            \begin{equation*}
                \det(M_n(\Om_n^\pm,a_1,a_2))=0
            \end{equation*}
            and for all $j\neq n$, 
            \begin{equation*}
                \det(M_j(\Om_n^\pm,a_1,a_2))\neq 0.
            \end{equation*}
            Therefore, in view of the representation \eqref{differential at 0,0,0} of the linearized operator $d_{(r_1,r_2,x_1^1)}(F_1,F_2,G)$ at $(\Om_n^\pm,0,0,0)$, one deduces that the kernel of this operator is one-dimensional generated by the vector $(r_0^1,r_0^2,0)$ as defined in the statement of the proposition.

            Due to Proposition \ref{proposition of angular velocities is simple}, we find that $\Om_n^\pm$ satisfies \eqref{Om not equal set}. Therefore, by the virtue of Lemma \ref{differential at 0,0,0}, the Fredholm property for the linearized operator $d_{(r_1,r_2,x_1^1)}(F_1,F_2,G)$ at $(\Om_n^\pm,0,0,0)$ follows. 
            Hence we deduce that the range of this linearized operator is closed and of co-dimension one as a result of the one-dimensional property of its kernel.

            Next, let us give a more precise form about the range of the linearized operator. Therefore, we equip the Hilbert space $Y^\alpha\times Y^\alpha\times \R$ with the following $l^2$-scalar product: for two vectors with the following form
            \begin{equation*}
                \begin{split}
                    (H_1,H_2,A)=\left(\sum_{n=1}a_n\sin(n\theta),\sum_{n=1}b_n \sin(n\theta),A\right)\in Y^\alpha\times Y^\alpha\times \R,\vspace{0.25em}\\
                    (K_1,K_2,B)=\left(\sum_{n=1}c_n\sin(n\theta),\sum_{n=1}d_n \sin(n\theta),B\right)\in Y^\alpha\times Y^\alpha\times\R,
                \end{split}
            \end{equation*}
            there holds
            \begin{equation}
                \left\langle (H_1,H_2,A),(K_1,K_2,B)\right\rangle:=\sum_{n=1}a_n c_n+b_nd_n+A\cdot B.
            \end{equation}
            Furthermore, define 
            \begin{equation*}
                y_0=\left(-\frac{1}{2n}\left(\frac{a_1}{a_2}\right)^{n}\sin(n\theta),\left(-\Om_{n}^\pm+\frac{1}{2n}+\frac{1}{2\pi a_1^2}\right)\sin(n\theta),0\right).
            \end{equation*}
            It is obvious that 
            \[ 
                (M_n(\Om_n^\pm,a_1,a_2))^T
                \begin{pmatrix}
                    -\frac{1}{2n}\big(\frac{a_1}{a_2}\big)^{n}\vspace{0.5em}\\
                    -\Om_{n}^\pm+\frac{1}{2n}+\frac{1}{2\pi a_1^2}
                \end{pmatrix}=0,
            \]
            where $M^T$ denotes the transpose of the matrix $M$. Then for any $(h_1,h_2,b)\in X^\alpha\times X^\alpha\times\R$ given by
            \begin{equation*}
                h_1(\theta)=\sum_{n=1}h_n^{(1)}\cos(n\theta),\quad h_2(\theta)=\sum_{n=1}h_n^{(2)}\cos(n\theta),
            \end{equation*}
            it holds that
            \begin{equation*}
                \begin{split}
                    &\left\langle d_{(r_1,r_2,x_1^1)}(F_1,F_2,G)(\Om_{n}^\pm,0,0,0)[h_1,h_2,b],y_0 \right\rangle\\
                    &=-n\left\langle \begin{pmatrix}
                        M_n(\Om_n^\pm,a_1,a_2)
                        \begin{pmatrix}
                            h_n^{(1)}\vspace{0.25em}\\ 
                            h_n^{(2)}
                        \end{pmatrix}\sin(n\theta)\\
                        b
                    \end{pmatrix}
                    , 
                    \begin{pmatrix}
                        (\Om_{n}^\pm-\frac{1}{2\pi a_1^2})\sin(n\theta)\vspace{0.25em}\\
                        \frac{1}{2n}\big(\frac{a_1}{a_2}\big)^{n}\sin(n\theta)\\
                        0
                    \end{pmatrix} \right\rangle \\
                    &=-n\left\langle
                        \begin{pmatrix}
                            h_n^{(1)}\sin(n\theta)\vspace{0.25em}\\ 
                            h_n^{(2)}\sin(n\theta)\\
                            b
                        \end{pmatrix},
                        \begin{pmatrix}
                            (M_n(\Om_n^\pm,a_1,a_2))^T
                        \begin{pmatrix}
                            (\Om_{n}^\pm-\frac{1}{2\pi a_1^2})\vspace{0.25em}\\
                            \frac{1}{2n}\big(\frac{a_1}{a_2}\big)^{n}
                        \end{pmatrix}\sin(n\theta) \\
                        0
                        \end{pmatrix}
                        \right\rangle\\
                    &=0,
                \end{split}
            \end{equation*}
            which implies that $\Ran\left( d_{r_1,r_2,x_1^1}(F_1,F_2,G)(\Om_n^\pm,0,0,0)\right)\subset \Span_\alpha^\perp (y_0)$, 
            where 
            \begin{equation*}
                \Span_\alpha^\perp (y_0):=\left\{y\in Y^\alpha\times Y^\alpha\times\R :\langle y,y_0\rangle=0\right\}.
            \end{equation*}
            Since $\Span_\alpha^\perp(y_0)$ is of co-dimension one in $Y^\alpha\times Y^\alpha\times \R$, it follows from the Fredholm property of $d_{(r_1,r_2,x_1^1)}(F_1,F_2,G)(\Om_n^\pm,0,0,0)$ that 
            \begin{equation}
                \Ran \left(d_{(r_1,r_2,x_1^1)}(F_1,F_2,G)(\Om_{n}^\pm,0,0,0)\right)={\rm span}_\alpha^\perp(y_0).
            \end{equation}
            It remains to prove the transversality condition. To this end, noting that  
            \begin{equation*}
                \begin{split}
                    &\partial_\Om d_{(r_1,r_2,x_1^1)}(F_1,F_2,G)(\Om_{n}^{\pm},0,0,0)[r_0^1,r_0^2,0]\\
                    &=n\left(-\frac{1}{2n}\left(\frac{a_1}{a_2}\right)^n \sin(n\theta),\left(\Om_n^\pm-\frac{1}{2n}-\frac{1}{2\pi a_1^2}\right)\sin(n\theta),0\right),
                \end{split}
            \end{equation*}
            we obtain that
            \begin{equation*}
                \begin{split}
                    &\langle \partial_\Om d_{(r_1,r_2,x_1^1)}(F_1,F_2,G)(\Om_{n}^{\pm},0,0,0)[r_0^1,r_0^2,0],y_0\rangle\\
                    &=-n\left(\left(\Om_n^\pm-\frac{1}{2n}-\frac{1}{2\pi a_1^2}\right)^2-\frac{1}{4n^2}\left(\frac{a_1}{a_2}\right)^{2n}\right)\\
                    &=-n\left(\Om_n^\pm-\frac{1}{2n}-\frac{1}{2\pi a_1^2}\right)\left(2\Om_n^\pm-\frac{a_2^2-a_1^2}{2a_2^2}-\frac{1}{2\pi a_1^2}-\frac{1}{2\pi a_2^2}\right).
                \end{split}
            \end{equation*}
            By the virtue of expression of $\Om_n^\pm$ given in \eqref{angular velocities} and $\Delta_n>0$, as long as $n\geq N(a_1,a_2)$, it shows that $2\Om_n^\pm-\frac{a_2^2-a_1^2}{2a_2^2}-\frac{1}{2\pi a_1^2}-\frac{1}{2\pi a_2^2}\neq 0$.
            As for the term $\Om_n^\pm-\frac{1}{2n}-\frac{1}{2\pi a_1^2}$, direct computation shows that
            \begin{equation*}
                \begin{split}
                    \Om_n^\pm-\frac{1}{2n}-\frac{1}{2\pi a_1^2}=&\frac{1}{2}\left(\frac{a_2^2-a_1^2}{2a_2^2}+\frac{1}{2\pi a_2^2}-\frac{1}{2\pi a_1^2}-\frac{1}{n}\right)\\
                    &\pm \frac{1}{2}\sqrt{\left(\frac{a_2^2-a_1^2}{2a_2^2}+\frac{1}{2\pi a_2^2}-\frac{1}{2\pi a_1^2}-\frac{1}{n}\right)^2-\frac{1}{n^2}\left(\frac{a_1}{a_2}\right)^{2n}}\\
                    \neq&0.
                \end{split}
            \end{equation*}
            As having expected, we have 
            \begin{equation*}
                \begin{split}
                    \partial_\Om d_{(r_1,r_2,x_1^1)}(F_1,F_2,G)(\Om_{n}^{\pm},0,0,0)[r_0^1,r_0^2,0]&\notin \Span_\alpha^\perp (y_0)\\
                    &=\Ran(\partial_\Om d_{(r_1,r_2,x_1^1)}(F_1,F_2,G)(\Om_{n}^{\pm},0,0,0)).
                \end{split}
            \end{equation*}
            It is now obvious that the proposition holds.
        \end{proof}
        
\subsection*{Acknowledgments:}

    This work was supported by  National Key R\&D Program of China (Grant 2022YFA1005602) and NNSF of China (grant No.12371212).

\phantom{s}
\thispagestyle{empty}

\end{document}